\gdef\SetFigFont#1#2#3#4#5{\reset@font\fontsize{#1}{#2pt}\fontfamily{#3}\fontseries{#4}\fontshape{#5}\selectfont}\fi
\font\script=rsfs10 at 11pt
\def\H{{\mbox{\script H}\;}}
\def\N{\mathbb{N}}
\def\R{\mathbb{R}}
\def\S{\mathbb{S}}
\def\PE{P_{\rm eucl}}
\def\VE#1{|#1|_{\rm eucl}}
\def\eps{\varepsilon}
\def\IF{{\mathfrak I}}
\def\XXint#1#2#3{{\setbox0=\hbox{$#1{#2#3}{\int}$} \vcenter{\vspace{-1pt}\hbox{$#2#3$}}\kern-.5\wd0}}
\def\Xint#1{\mathchoice {\XXint\displaystyle\textstyle{#1}}{\XXint\textstyle\scriptstyle{#1}}{\XXint\scriptstyle\scriptscriptstyle{#1}}{\XXint\scriptscriptstyle\scriptscriptstyle{#1}}\!\int}
\def\intmed{\Xint{-}}
\def\step#1#2{\par\noindent{\underline{\it Step~#1.}}\emph{ #2}\\}
\def\Chi#1{\hbox{{\large $\chi$}{\Large $_{_{#1}}$}}}
\numberwithin{equation}{section}
\newtheorem{theorem}{Theorem}[section]
\newtheorem{lemma}[theorem]{Lemma}
\newtheorem{conj}[theorem]{Conjecture}
\newtheorem{prop}[theorem]{Proposition}
\newtheorem{corollary}[theorem]{Corollary}
\newtheorem{definition}[theorem]{Definition}
\newtheorem{remark}[theorem]{Remark}
\newtheorem{example}[theorem]{Example}
\newtheorem{op}[theorem]{Open Problem}
\title[Isoperimetric problem with density]{Existence of isoperimetric regions in $\R^n$ with density}
\author{Frank Morgan}
\address{Department of Mathematics and Statistics, Williams College, Williamstown, MA 01267}
\email{Frank.Morgan@williams.edu}
\author{Aldo Pratelli}
\address{Dipartimento di Matematica, Universit\`a di Pavia ``F. Casorati'', via Ferrata 1, 27100 Pavia, Italy}
\email{aldo.pratelli@unipv.it}
\begin{document}

\begin{abstract}
We prove the existence of isoperimetric regions in $\R^n$ with density under various hypotheses on the growth of the density. Along the way we prove results on the boundedness of isoperimetric regions.
\end{abstract}

\maketitle

\tableofcontents

\section{Introduction}

There has been a recent surge of interest in Riemannian manifolds with a positive ``density'' function that weights volume and area (see~\cite{newM1,newM2}) and in particular in the isoperimetric problem of minimizing weighted perimeter for given weighted volume. Whether isoperimetric regions exist in $\R^n$ with density depends on the density. We present the following:
\par\medskip\noindent
{\bf Conjecture~\ref{conj}.} {\it Let $f$ be a radial, increasing density on $\R^n$. Then isoperimetric sets exist for all volumes.}

Proposition~\ref{ex:nonex} gives a non-radial increasing density for which existence fails. 

Following more restrictive results of Rosales et al.~\cite{BCMR}, our Theorem~\ref{thm2.2} proves the conjecture if the density approaches infinity, even if not increasing. Propositions~\ref{non-existence-d-increasing} and~\ref{Prop2.1A} provide examples to show that neither hypothesis can be simply deleted.

If the density approaches a finite limit $a$ at infinity, we do not need to assume $f$ radial, but we need some assumption on the growth (Theorems~\ref{Thm2A}, \ref{6.11}, \ref{Thm2B}) and some condition to make isoperimetric regions bounded (Corollary~\ref{corbdd}). Our results cover all the standard examples (Remark~\ref{Rem7.16}).

The growth hypotheses are of two types. Theorems~\ref{Thm2A} and~\ref{6.11} assume that the density approaches
the limiting value slowly in some sense. Theorem~\ref{Thm2B} assumes an averaging condition on the density, weaker than superharmonicity (Corollary~\ref{Thm2D}).

We prove boundedness for increasing densities in three cases. Proposition~\ref{propisobound2} handles $\R^2$ without assuming the density radial. Proposition~\ref{nonbdd} shows by example that in general dimensions, further hypotheses are necessary. Proposition~\ref{thba} handles $\R^n$ with radial density. Proposition~\ref{lastfrank} instead assumes the density ${\rm C}^1$ and Lipschitz.

{\bf The proofs.} The main step of the existence proofs (Proposition~\ref{generalcase}) is to show that there are balls arbitrarily far from the origin with ``mean density'' at most $a$. Given that, the proof of the existence of an isoperimetric region of prescribed volume proceeds as follows. Take a minimizing sequence converging to a limit $F$. The problem is that some volume may be lost to infinity, with mean density $a$. Since $F$ is bounded, the missing volume may be replaced by a ball far from the origin of mean density $a$. The hardest part is to find the right growth conditions to provide the distant balls with mean density at most $a$.

Section~\ref{sect5} discusses the convexity of isoperimetric sets. Section~\ref{seccon} gives our main existence results, and Section~\ref{sectopenpbs} collects some open problems.

This paper focuses on $\R^n$ for $n\geq 2$. In $\R^1$, most of our questions are trivial and much finer results are already known (see e.g.~\cite{BCMR}).

\section{Preliminaries\label{succor}}

Let us first set some notation and list some known results. We confine attention to Euclidean space $\R^n$. For a background on geometric measure theory, see Giusti~\cite{G} or Morgan~\cite{M1}. The ball and the sphere of radius $r$ are denoted respectively by
\begin{align*}
B(r) := \big\{ x\in \R^n:\, |x| \leq r\big\} \,, && S(r) := \big\{ x\in \R^n:\, |x| = r\big\} = \partial B(r) \,.
\end{align*}
The letter $f$ will always denote the density (lower-semicontinuous positive function on $\R^n$) that we use to calculate perimeters and volumes. Hence, given any set $E$ of locally finite perimeter, we will denote its volume and perimeter by
\begin{align*}
\big| E \big|_f := \int_{E} f(x) \, dx\,, && P_f (E) := \int_{\partial E} f(x) \, d\H^{n-1}(x)\,.
\end{align*}
By $\partial E$ we denote the essential boundary of $E$, which coincides with the usual boundary of $E$ if it is a smooth or piecewise affine set. For a given positive volume $V>0$, we set
\[
\IF_f(V) := \inf \Big\{ P_f(E):\, |E|_f=V \Big\}\,.
\]
The function $\IF_f$ is usually referred to as the \emph{isoperimetric function} or \emph{isoperimetric profile}, while an \emph{isoperimetric set} is any set $E$ such that $P_f(E)= \IF_f\big(|E|_f\big)$. We will avoid the subscript $f$ when there is no risk of confusion. The following regularity result is known; see for instance~\cite[Proposition~3.5, Corollary~3.8]{M2}.
\begin{theorem}\label{thm1.0}
Let $f$ be a smooth or $C^{k-1,\alpha}$ density on $\R^n$. Then the boundary of an isoperimetric set is a smooth or $C^{k,\alpha}$ submanifold except on a singular set of Hausdorff dimension at most $n-8$.
\end{theorem}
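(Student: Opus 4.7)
The statement is essentially a transfer of the classical De Giorgi--Almgren--Federer regularity theory from the unweighted to the weighted setting, plus a Schauder bootstrap. My plan would run as follows.

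\textbf{Step 1: Almost-minimality.} First I would show that an isoperimetric set $E$ is a $(\Lambda, r_0)$-quasi-minimizer of the weighted perimeter, i.e.\ for every $F$ with $E\triangle F \subset\subset B(x,r_0)$ one has $P_f(E)\le P_f(F)+\Lambda |E\triangle F|$. This is the standard way to drop the volume constraint: given a small ball $B$ disjoint from $E\triangle F$ in which one can freely modify $E$ to restore the volume, the ``cost per unit of volume'' of such a modification is bounded by the weighted mean curvature one would impose on a ball there, which is uniformly controlled because $f$ is continuous and locally bounded away from zero and infinity.

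\textbf{Step 2: Comparison with the Euclidean problem.} Since $f$ is continuous and strictly positive, on a small ball around any point $x_0$ we have $f(x_0)(1-\eps)\le f\le f(x_0)(1+\eps)$. Hence $P_f$ differs from $f(x_0)\cdot P_{\rm eucl}$ by a multiplicative factor $1+O(\eps)$, so $E$ is locally also an almost-minimizer of the \emph{unweighted} perimeter in the sense of Tamanini. At this point the classical regularity machinery applies verbatim.

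\textbf{Step 3: $C^{1,\alpha}$ regularity and the singular set.} Invoking Tamanini's theorem (itself built on De Giorgi's excess-decay for almost-minimizers), the reduced boundary $\partial^* E$ is a $C^{1,\alpha}$ hypersurface for some $\alpha\in(0,1)$. The dimension bound on $\partial E\setminus \partial^* E$ comes from Federer's dimension-reduction argument together with the Bernstein-type result of Simons: every tangent cone is area-minimizing in the Euclidean sense (the density factor $f(0)$ drops out on a cone centered at a point where the density is nonzero), and in $\R^m$ with $m\le 7$ the only such cones are halfspaces, so any singularity forces $m\ge 8$. This is the step I would expect to be the technical heart of the argument, but none of the pieces needs to be rederived.

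\textbf{Step 4: Bootstrap.} On the smooth part, $\partial E$ has constant generalized mean curvature
\[
H - \langle \nabla\log f, \nu\rangle = \lambda,
\]
which in a local graph representation $x_n = u(x')$ becomes a quasilinear elliptic equation of the form $\mathrm{div}\bigl(Du/\sqrt{1+|Du|^2}\bigr) = g(x',u,Du)$ with $g$ involving $f$ and its first derivatives. Starting from the $C^{1,\alpha}$ regularity of Step~3, Schauder theory upgrades $u$ by one derivative for each extra derivative of $f$: if $f\in C^{k-1,\alpha}$ then the coefficients of the linearized equation are in $C^{k-1,\alpha}$ (or $C^\infty$ in the smooth case), yielding $u\in C^{k,\alpha}$ (respectively $C^\infty$), as claimed.
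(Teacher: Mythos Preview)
The paper does not supply a proof of this statement; it is quoted as a known regularity result with a reference to \cite[Proposition~3.5, Corollary~3.8]{M2}. Your outline is the standard route to such a theorem and is essentially what one finds in the cited reference and the surrounding literature: pass from the constrained problem to a $(\Lambda,r_0)$-almost-minimizer, use local comparability of $P_f$ with the Euclidean perimeter to invoke the De~Giorgi--Tamanini $C^{1,\alpha}$ theory and the Federer--Simons dimension reduction for the singular set, and then bootstrap via the constant-generalized-mean-curvature equation. So there is nothing to compare against beyond the citation; your sketch simply unpacks what the paper takes for granted.

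One minor imprecision in Step~4: if $f\in C^{k-1,\alpha}$ then $\nabla\log f\in C^{k-2,\alpha}$, not $C^{k-1,\alpha}$, so the right-hand side $g(x',u,Du)$ is only $C^{k-2,\alpha}$ in its explicit spatial dependence. The Schauder iteration still terminates at $u\in C^{k,\alpha}$ (each pass gains one derivative until the ceiling set by $\nabla\log f$ is reached), so the conclusion is unaffected.
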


Given a set of finite perimeter $E$, for $\H^{n-1}$-a.e. $x\in\partial E$ the \emph{outer} normal $\nu_E(x)$ is well defined. Sometimes, when there is no risk of confusion, we will simply write $\nu(x)$. For $x\in\partial E$, let $H_0(x,E)$ denote the inward Euclidean mean curvature, defined if $E$ is twice differentiable at $x$. For convenience, we take the mean curvature to be the sum rather than the average of the principal curvatures, so that it is $n-1$ rather than $1$ for the unit sphere in $\R^n$. If $E\subseteq \R^2$ is locally the region above the graph of a function $\tau$, twice differentiable at $x$, then the (upward) mean curvature is given by
\[
H_0\big( (x,\tau(x)\big) = \frac{\tau''(x)}{\big( 1+\tau'(x)^2\big)^{3/2}}\,.
\]
(see~\cite[p. 6]{M3}). A twice differentiable connected planar set is convex if and only if its boundary has nonnegative (inward) mean curvature. A twice differentiable connected subset of $\R^n$ is called \emph{mean-convex} if it has nonnegative mean curvature.\par\bigskip

We now present a classical first variation formula and the notion of curvature in the case of Euclidean space $\R^n$ with a density $f$ (for more details, see \cite[Sect. 3]{BCMR}). To start, we define the function $v:\R^2\to\R$ such that the density $f$ can be expressed as
\[
f(x) = e^{v(x)}\,.
\]
A careful computation yields the following first order expansion formulae for perimeter and volume.
\begin{lemma}[{\bf First variation formulae}\hspace{0pt}]\label{lemma-expansion}
Let $E$ be a $C^2$ subset of $\R^n$. For any $C^2$ function $u: \partial E \to \R$ and a small positive number $\eps$, consider the set $E_\eps$ such that
\[
\partial E_\eps = \Big\{ x + \eps u(x) \nu(x):\, x\in \partial E \Big\}\,.
\]
Then the following first-order expansions for volume and perimeter of $E_\eps$ hold,
\begin{gather}
\big| E_\eps \big|_f = \big| E\big|_f +  \eps \int_{\partial E} u(x) f(x) \, d\H^{n-1}(x) + o(\eps)\,,\label{expansion-volume}\\
P_f\big( E_\eps \big) = P_f\big( E\big) +  \eps \int_{\partial E} \bigg( H_0 (x,E) + \frac{\partial v}{\partial \nu_E(x)} (x)\bigg) u(x) f(x) \, d\H^{n-1}(x) + o(\eps)\,.\label{expansion-wrong}
\end{gather}
\end{lemma}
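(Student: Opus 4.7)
The plan is to parametrize $\partial E_\eps$ by the map $\Phi_\eps:\partial E\to\R^n$, $\Phi_\eps(x)=x+\eps u(x)\nu_E(x)$, and to parametrize the symmetric-difference tube $E_\eps\triangle E$ by $\Psi(x,s)=x+s\nu_E(x)$ for $x\in\partial E$ and $s$ lying between $0$ and $\eps u(x)$. The $C^2$ regularity of $E$ and $u$ guarantees that, for $\eps$ small enough, both maps are diffeomorphisms onto their images, so the change-of-variables formula applies and the task reduces to Taylor-expanding the Jacobians and the density in $\eps$.

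For \eqref{expansion-volume}, the full Jacobian of $\Psi$ satisfies $J_\Psi(x,s)=1+O(s)$, since at $s=0$ the tangential Jacobian on $\partial E$ is $1$ and $\partial_s\Psi=\nu_E$ is the unit normal. Using the signed interpretation of the inner integral to absorb the sign of $u$ (where $u<0$ the tube contributes negatively), Fubini gives
\[
|E_\eps|_f-|E|_f=\int_{\partial E}\!\int_0^{\eps u(x)}\! f\bigl(x+s\nu_E(x)\bigr)J_\Psi(x,s)\,ds\,d\H^{n-1}(x)=\eps\!\int_{\partial E}\! u(x)f(x)\,d\H^{n-1}(x)+o(\eps),
\]
which is \eqref{expansion-volume}.

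For \eqref{expansion-wrong}, I fix $x\in\partial E$ and work in an orthonormal basis $\tau_1,\dots,\tau_{n-1}$ of principal directions, so $\nabla_{\tau_i}\nu_E=\kappa_i\tau_i$ with the sign convention giving $\sum_i\kappa_i=H_0(x,E)$. A direct computation yields $d\Phi_\eps(\tau_i)=(1+\eps u\kappa_i)\tau_i+\eps(\partial_{\tau_i}u)\nu_E$, and the Gram determinant of these vectors gives the tangential Jacobian
\[
J_{\Phi_\eps}(x)=\prod_i(1+\eps u(x)\kappa_i)+O(\eps^2)=1+\eps u(x)H_0(x,E)+O(\eps^2);
\]
the tangential derivatives of $u$ contribute only at order $\eps^2$ because their contributions lie in the normal direction and therefore appear in the Gram matrix only through a rank-one block of magnitude $\eps^2$. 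Combining this with the Taylor expansion $f(\Phi_\eps(x))=f(x)+\eps u(x)(\nabla f\cdot\nu_E)(x)+o(\eps)$ and the identity $\nabla f\cdot\nu_E=f\,\partial v/\partial\nu_E$ that follows from $f=e^v$, the change-of-variables formula yields
\[
P_f(E_\eps)=\int_{\partial E}f\bigl(\Phi_\eps(x)\bigr)J_{\Phi_\eps}(x)\,d\H^{n-1}(x)=P_f(E)+\eps\!\int_{\partial E}\Big(H_0(x,E)+\frac{\partial v}{\partial\nu_E}(x)\Big)u(x)f(x)\,d\H^{n-1}(x)+o(\eps),
\]
which is \eqref{expansion-wrong}.

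The only delicate point is the Jacobian expansion for $\Phi_\eps$: one must verify that the tangential derivatives of $u$ do not disturb the first-order coefficient. This reduces to the elementary linear-algebra observation above about the Gram matrix, namely that the $\eps(\partial_{\tau_i}u)\nu_E$ terms enter only quadratically because they are orthogonal to the unperturbed tangent basis. Beyond that, the argument is just Taylor expansion in $\eps$ together with the area and coarea formulas, both legitimate under the $C^2$ hypotheses on $E$ and $u$ and for $\eps$ sufficiently small.
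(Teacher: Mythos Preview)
Your proof is correct and is the standard derivation of the first variation formulae via the normal graph parametrization. The paper does not actually prove this lemma: it simply states it as a classical fact, prefaced by ``A careful computation yields\ldots'' and a reference to \cite[Sect.~3]{BCMR} for details. So there is nothing to compare against; you have supplied the computation the paper omits, and the key observation you flag --- that the tangential derivatives of $u$ contribute only at order $\eps^2$ because they enter the Gram matrix through a rank-one perturbation orthogonal to the tangent space --- is exactly the point that makes the calculation go through.
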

In view of the above expansions, it is natural to give the following definition.
\begin{definition}\label{X.x}
Let $E$ be a set of finite perimeter. For any $x\in\partial E$ such that $\nu_E(x)$ exists (hence, for $\H^{n-1}-$a.e. $x\in\partial E$), we define the \emph{generalized curvature} with respect to the density $f=e^v$ as
\begin{equation}\label{X.x'}
H_f(x,E) := H_0 (x,E) + \frac{\partial v}{\partial \nu_E(x)} (x)\,.
\end{equation}
Again, when there is no risk of confusion, we will simply write $H_f(x)$, or even $H(x)$.
\end{definition}
As a consequence of this definition, (\ref{expansion-wrong}) simply reads as
\begin{equation}\label{expansion-perimeter}
P_f\big( E_\eps \big) = P_f\big( E\big) +  \eps \int_{\partial E} H_f (x,E) u(x) f(x) \, d\H^{n-1}(x) + o(\eps)\,,
\end{equation}
thus by~(\ref{expansion-volume}) (the boundary of) an isoperimetric set has always constant generalized curvature.\par\medskip
To conclude this introduction, we recall the following results.
\begin{theorem}[{\bf \cite{BCMR}, Theorem~3.10}\hspace{0pt}]
Consider a radial density $f=e^v$ on $\R^n$. If $v$ is convex, then balls about the origin are stable, while if $v$ is strictly
concave, then balls are unstable. More precisely, the second variation of perimeter for fixed volume for the ball $B(r)$ has the same sign as $v"(r)$.
\end{theorem}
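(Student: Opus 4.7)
Since $f$ is radial, both $f$ and $H_f=(n-1)/r+v'(r)$ are constant on $S(r)$, so $B(r)$ is a critical point of $P_f$ under the volume constraint, with Lagrange multiplier $H_f(r)$. The plan is to compute the second variation of the Lagrangian $\mathcal{L}:=P_f-H_f(r)\,|\cdot|_f$ restricted to the subspace of first-order volume-preserving perturbations, and show that its sign is controlled by $v''(r)$. At a critical point, this unconstrained second variation of $\mathcal{L}$ agrees with the constrained second variation of $P_f$ on that subspace, which is the standard Lagrange-multiplier reduction.

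Parametrize the perturbed surface as the radial graph $\omega\mapsto(r+\eps\varphi(\omega))\omega$ for $\omega\in S^{n-1}$. Radiality of $f$ reduces the first-order volume constraint $\int_{\partial B(r)}\varphi f\,d\H^{n-1}=0$ to the zero-mean condition $\int_{S^{n-1}}\varphi\,d\omega=0$. Setting $F(s):=f(s)s^{n-1}$ and computing the area element of the radial graph, one has
\[
|E_\eps|_f=\int_{S^{n-1}}\int_0^{r+\eps\varphi}F(\rho)\,d\rho\,d\omega,\qquad P_f(E_\eps)=\int_{S^{n-1}}F(r+\eps\varphi)\sqrt{1+\tfrac{\eps^2|\nabla\varphi|^2}{(r+\eps\varphi)^2}}\,d\omega,
\]
where $\nabla$ is the gradient on $S^{n-1}$. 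Taylor-expanding to order $\eps^2$ and using the key identity $F'(r)=F(r)H_f(r)$, which upon differentiating gives $F''(r)-H_f(r)F'(r)=F(r)\bigl(v''(r)-(n-1)/r^2\bigr)$, the formula for the second variation of $\mathcal{L}$ collapses to
\[
\tfrac{d^2\mathcal{L}}{d\eps^2}\bigg|_{\eps=0}=F(r)\int_{S^{n-1}}\bigg[v''(r)\varphi^2+\tfrac{|\nabla\varphi|^2-(n-1)\varphi^2}{r^2}\bigg]\,d\omega.
\]

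To conclude, apply the Poincar\'e inequality on $S^{n-1}$: the first nonzero eigenvalue of the Laplace--Beltrami operator is $n-1$, realized precisely by the first-order spherical harmonics $\omega\mapsto a\cdot\omega$, $a\in\R^n$ (which geometrically correspond to infinitesimal translations of the ball). Hence $\int|\nabla\varphi|^2\,d\omega\ge(n-1)\int\varphi^2\,d\omega$ for every zero-mean $\varphi$, with equality exactly on translational modes. Therefore $\delta^2\mathcal{L}(\varphi)\ge F(r)v''(r)\int\varphi^2\,d\omega$, with equality on translational modes, and this yields the claimed sign: strictly positive (stability) when $v''(r)>0$, and strictly negative (instability, in a translational direction) when $v''(r)<0$. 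The main obstacle in the proof is just careful bookkeeping in the Taylor expansion of the radial-graph area element; once one isolates the identity $F'=F\cdot H_f$ (equivalently, the Euler--Lagrange equation satisfied by the critical radial profile), the remaining cancellations are automatic and the Poincar\'e inequality finishes the job.
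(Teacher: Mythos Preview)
The paper does not give its own proof of this statement: it is quoted verbatim as Theorem~3.10 of \cite{BCMR} and left unproved here, serving only as background in Section~\ref{succor}. So there is no in-paper argument to compare against.

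That said, your sketch is correct and is essentially the computation carried out in \cite{BCMR}. The second-variation formula you derive,
\[
\delta^2\mathcal{L}(\varphi)=F(r)\int_{S^{n-1}}\Big[v''(r)\,\varphi^2+\tfrac{|\nabla\varphi|^2-(n-1)\varphi^2}{r^2}\Big]\,d\omega,
\]
is exactly what one obtains there, and the sharp Poincar\'e inequality on $S^{n-1}$ (first nonzero eigenvalue $n-1$, attained on the coordinate functions, i.e.\ infinitesimal translations) is precisely the mechanism that pins the sign to $v''(r)$. Your bookkeeping via $F(s)=f(s)s^{n-1}$ and the identity $F'=F\cdot H_f$ is a clean way to organize the cancellation. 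The only point worth making explicit is that the Lagrangian second variation on the subspace $\{\int\varphi=0\}$ coincides with the genuine volume-constrained second variation of $P_f$; you note this, and it is standard, but it is the one place a reader might ask for a line of justification.
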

\begin{figure}[htbp]
\input{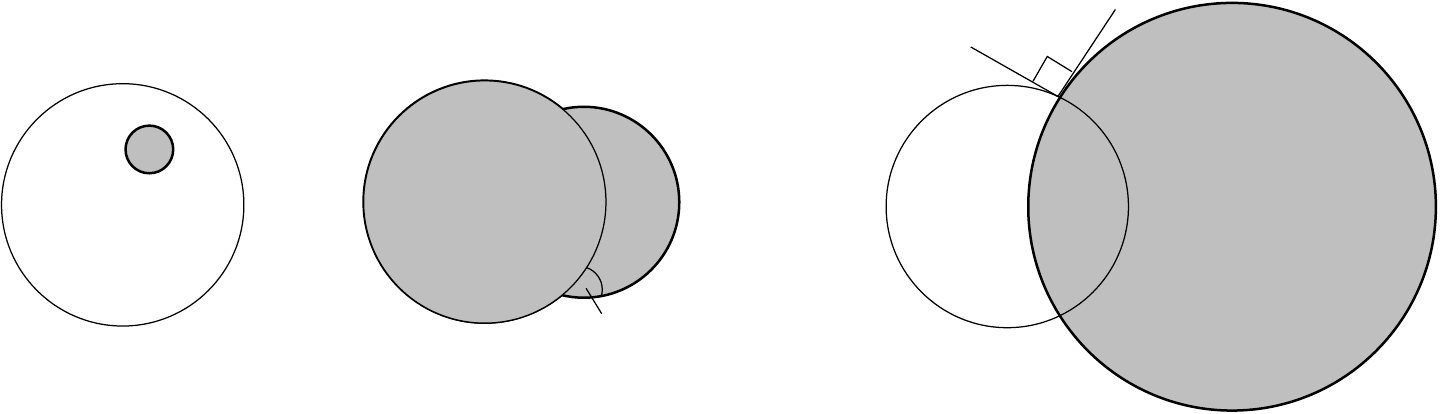tex_t}\vspace{50pt}
\input{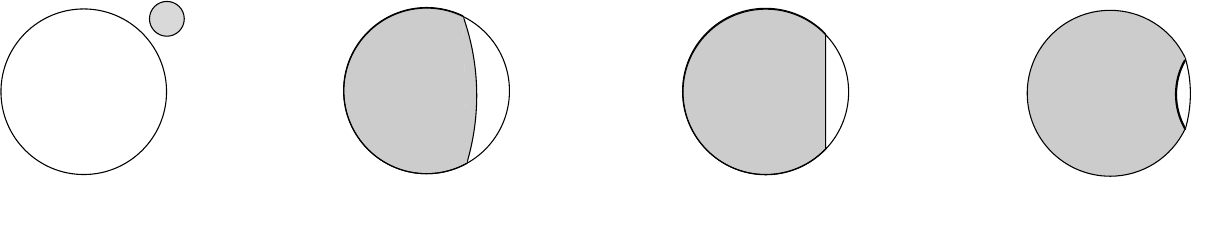tex_t}
\caption{The first row shows isoperimetric sets for density $\lambda< 1$  inside the ball and density $1$ outside the ball; the second row for density $1$ inside the ball and density $\lambda < 1$ outside the ball (Example~\ref{ex11}). Some of these sets are neither symmetric nor convex. Figures courtesy Ca\~nete, Miranda, and Vittone~\cite[Figures 17 and 13]{CMV}; all rights reserved.}\label{figdav}
\end{figure}
\begin{example}[{\bf \cite{CMV}, Theorems~3.23, 3.20}\hspace{0pt}]\label{ex11}
Consider $\R^2$ with density $0<\lambda<1$ inside the unit ball and density $1$ outside the ball. This density is radial and nondecreasing (although neither log-concave or log-convex), but it admits some isoperimetric sets which are not convex or which do not contain the origin, as in the first line of Figure~\ref{figdav}. In particular, for prescribed volume slightly greater than $\lambda\pi$, the isoperimetric profile satisfies
\[
\IF(\lambda \pi+ \eps) \approx 2\lambda \pi + c_\lambda \sqrt{\eps}\, .
\]
Alternatively, consider $\R^2$ with density $1$ inside the open unit ball and density $0 < \lambda < 1$ outside the ball. This density is radial and nonincreasing, but again admits some isoperimetric sets which are not convex or which do not contain the origin, as in the second line of Figure~\ref{figdav}. For prescribed volume slightly less than $\pi$, the isoperimetric profile satisfies
\[
\IF(\pi-\eps) \approx 2 \lambda \pi + c_\lambda \sqrt{\eps}\,;
\]
in particular, it is sometimes decreasing.
\end{example}

\section{Existence of isoperimetric sets\label{sect2}}


Notice that, due to the different scaling of perimeter and volume, in regions with constant density the perimeter of a ball of given volume is larger when the density is larger. Hence, roughly speaking, isoperimetric sets try to go where the density is low. As a consequence, we can expect that a sequence of sets minimizing the perimeter stays close to the origin if the density diverges, while they wander far from the origin if the density tends to $0$. Therefore, for a density which decreases to $0$, isoperimetric sets often fail to exist. More generally, for a
density which decreases to any limit, isoperimetric sets often fail to exist. On the other hand, one naively expects that existence should hold when the density is increasing, and in particular if the density diverges to infinity. Unfortunately, this is not true for some bumpy densities (see~\cite[Example~2.6]{BCMR}).

\begin{prop}\label{non-existence-d-increasing}
There exists a smooth density $f$ on $\R^n$ approaching infinity at infinity for which no isoperimetric set exists. Indeed, the infimum perimeter to enclose every volume $V > 0$ is $\IF(V) = 0$.
\end{prop}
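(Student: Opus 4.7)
The plan is to construct a concrete density $f$ consisting of a slowly growing background $f_0(x)=1+|x|$ with a sequence of narrow ``volcanoes'' superimposed at widely-spaced points $x_k=(2^k,0,\ldots,0)$. The $k$-th volcano is supported in $B(x_k,2R_k)$: set $f\equiv c_k$ on the basin annulus $A_k:=B(x_k,R_k)\setminus B(x_k,R_k/2)$; set $f=c_k+g_k(|x-x_k|)$ on the inner ball $B(x_k,R_k/2)$, where $g_k\ge0$ is a smooth radial bump of prescribed total mass $P_k=\int g_k\,dx$; and smoothly interpolate between $c_k$ and $1+|x_k|$ on the transition shell $B(x_k,2R_k)\setminus B(x_k,R_k)$. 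Take $R_k=k^{-1/n}$ and $c_k=\log k$; then for $n\ge2$ one has $c_kR_k^{n-1}=\log k\,k^{-(n-1)/n}\to0$ while $\sum_k c_kR_k^n=\sum_k\log k/k=+\infty$.

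The candidate sets are the balls $E_{k,r}:=B(x_k,r)$ for $r\in[R_k/2,R_k]$. Since $\partial E_{k,r}\subset A_k$ where $f\equiv c_k$,
\[
P_f(E_{k,r})=c_k\,n\omega_n r^{n-1}\le c_k n\omega_n R_k^{n-1}\longrightarrow 0,\qquad |E_{k,r}|_f=P_k+c_k\omega_n r^n,
\]
so as $r$ varies in $[R_k/2,R_k]$, $|E_{k,r}|_f$ runs continuously through an interval $J_k$ of length $\Delta_k=c_k\omega_n R_k^n(1-2^{-n})\asymp\log k/k$ beginning at $P_k+c_k\omega_n(R_k/2)^n$. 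Choose the peak masses $(P_k)$ so that for every $V>0$ the index set $\{k:V\in J_k\}$ is infinite; this is possible exactly because $\sum_k\Delta_k=+\infty$. Concretely one prescribes $(P_k)$ along a double-index enumeration $k=k(m,j)$ of a countable dense subset $(q_m)\subset(0,\infty)$ by setting $P_{k(m,j)}=q_m$, so that each $q_m$ occurs as $P_k$ for infinitely many indices with matching scale $\Delta_k\asymp V-q_m$.

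Given such $(P_k)$, for every $V>0$ one obtains a subsequence $k_j\to\infty$ and $r_{k_j}\in[R_{k_j}/2,R_{k_j}]$ with $|E_{k_j,r_{k_j}}|_f=V$ and $P_f(E_{k_j,r_{k_j}})\to0$, whence $\IF(V)=0$. Since $f$ is bounded below by a positive constant, every set of positive $f$-volume has strictly positive $f$-perimeter, so the infimum is not attained and no isoperimetric set exists. Smoothness and positivity of $f$ are built in, and $f(x)\to+\infty$ as $|x|\to\infty$: outside the volcanoes $f(x)\ge1+|x|$, while inside or near the $k$-th volcano $f\ge c_k=\log k$, with $k\to\infty$ because $|x|\sim 2^k$ there.

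The hard part is the three-way balance forced by the hypothesis. The requirement $f\to\infty$ forces $c_k\to\infty$ on the basins; the vanishing-perimeter estimate forces $c_kR_k^{n-1}\to0$; and covering every $V>0$ infinitely often forces $\sum_k c_kR_k^n=+\infty$. The borderline choice $R_k=k^{-1/n},\ c_k=\log k$ reconciles all three simultaneously, and does so only when $n\ge2$ (consistent with the paper's standing hypothesis and with the triviality of the problem in $\R^1$, where $R_k^{n-1}=1$ and the first and second conditions become incompatible).
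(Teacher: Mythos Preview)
Your approach is genuinely different from the paper's, and the strategy is sound, but the covering step has a gap as written.

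\textbf{Comparison with the paper.} The paper's argument is much shorter and avoids the covering issue entirely. Starting from any smooth radial diverging $f_0$, it picks disjoint balls $B_i$ far from the origin with $P_{f_0}(B_i)=1/i^2$, then \emph{raises} the density only in the interior of each $B_i$ so that $|B_i|_f=1/i$ while $P_f(B_i)=1/i^2$ is unchanged. For any $V>0$ and any $N$, a suitable countable union $\bigcup_{i\in I}B_i$ with $I\subset\{N,N+1,\dots\}$ has volume exactly $V$ (since $\sum_{i\ge N}1/i=\infty$ and $1/i\to 0$, a greedy choice works) and perimeter at most $\sum_{i\ge N}1/i^2\to 0$. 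The use of \emph{unions} is the whole trick: volumes add, and no placement of intervals is needed. You instead insist on realising each $V$ by a \emph{single} ball $E_{k,r}$, which is what forces the covering argument and the delicate three-way balance $c_kR_k^{n-1}\to 0$, $\sum c_kR_k^n=\infty$.

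\textbf{The gap.} Your abstract claim---that one can choose the $(P_k)$ so that every $V>0$ lies in infinitely many $J_k$, because $\sum_k\Delta_k=\infty$---is correct, and is proved cleanly by a sweeping argument: cut $\mathbb{N}$ into consecutive finite blocks $B_m$ with $\sum_{k\in B_m}\Delta_k\ge m$ and lay the intervals indexed by $B_m$ end to end over $(0,m)$. But the ``concrete'' construction you propose does \emph{not} do this. If $P_{k(m,j)}=q_m$, then $J_{k(m,j)}=[\,q_m+2^{-n}\omega_n\log k/k,\ q_m+\omega_n\log k/k\,]$ with $k=k(m,j)$; this interval lies strictly to the right of $q_m$ and shrinks to $\{q_m\}$ as $j\to\infty$, so $q_m$ itself is never covered, and an irrational $V$ near $q_m$ is covered only if $V-q_m$ happens to fall in the window $[2^{-n},1]\cdot\omega_n\log k/k$ for some $k=k(m,j)$. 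The phrase ``with matching scale $\Delta_k\asymp V-q_m$'' is not a choice made in the construction (which must be fixed independently of $V$), and for a generic pairing function there is no reason any $k(m,j)$ lands in that window. Replace this paragraph by the sweeping argument and the proof goes through.

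\textbf{Minor points.} With $c_k=\log k$ you have $c_1=0$, so $f$ is not bounded below by a positive constant; start at $k_0\ge 3$ or use $c_k=2+\log k$. On the transition shell you must match $f_0(x)=1+|x|$ (not the constant $1+|x_k|$) on the outer sphere to keep $f$ smooth; since $R_k\to 0$ this is a routine adjustment.
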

\begin{proof}
Let us start with any radial and increasing smooth density $f_0$ which is diverging to infinity. Then, take countably many disjoint balls $B_i$ moving away from the origin and such that $P_{f_0}(B_i) = 1/i^2$. Observe that the radii of these balls decrease to $0$ and the volumes $\big| B_i \big|_{f_0}$ of these balls converge to $0$ faster than $1/i^2$. For any ball $B_i$, with $i$ big enough, it is possible to \emph{increase} the density in the interior of the ball, still remaining smooth, in such a way that the volume of the ball increases up to $1/i$. Applying this procedure for all the balls, we find a smooth diverging density $f\geq f_0$ such that for each $i\in\N$ big enough one has
\begin{align*}
P_f(B_i) = \frac 1{i^2}\,, && \big| B_i \big|_f = \frac 1 i\,.
\end{align*}
Countable unions of such balls provide sets of arbitrary volume and arbitrarily small perimeter.
\end{proof}

\begin{prop}\label{Prop2.1A}
There exists a smooth radial density $f$ on $\R^n$ for which no isoperimetric set exists.
\end{prop}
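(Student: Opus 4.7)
The plan is to exhibit an explicit smooth radial density $f$ that decays fast enough at infinity so that large annular shells far from the origin carry vanishing $f$-perimeter while still enclosing any prescribed amount of $f$-volume. This will force $\IF(V) = 0$ for every $V > 0$, and since $f$ is everywhere strictly positive the infimum cannot be attained.

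I would take $f(x) = (1 + |x|^2)^{-\alpha/2}$ for a fixed exponent $\alpha \in (n-1, n)$. This function is smooth, radial, and strictly positive on $\R^n$, with $|\R^n|_f = +\infty$ because $\alpha < n$, so the isoperimetric problem is nontrivial for every $V > 0$. For the annular shell $E_R := \{R < |x| < 2R\}$ a direct integration in polar coordinates yields, as $R \to \infty$,
\begin{align*}
|E_R|_f \;\sim\; c_n\, R^{n-\alpha}\,, && P_f(E_R) \;\sim\; c_n'\, R^{n-1-\alpha}.
\end{align*}
The choice $\alpha < n$ makes the volume diverge, while $\alpha > n-1$ forces the perimeter to vanish. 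For any prescribed $V > 0$, once $R$ is large enough so that $|E_R|_f > V$, the $f$-volume of $\{R < |x| < r\}$ is continuous and monotone in $r$, so by the intermediate value theorem one finds $R' \in (R, 2R)$ with $|\{R < |x| < R'\}|_f = V$; its perimeter is still bounded by $P_f(E_R) = O(R^{n-1-\alpha}) \to 0$. Hence $\IF(V) = 0$ for every $V > 0$.

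It remains to observe that this infimum is never attained. Any $E$ with $0 < |E|_f < \infty$ satisfies both $|E| > 0$ (since $f \leq 1$ gives $|E| \geq |E|_f > 0$) and $|\R^n\setminus E| > 0$ (otherwise $E=\R^n$ a.e., forcing $|E|_f = |\R^n|_f = \infty$). The standard structure theorem for sets of finite perimeter then gives $\H^{n-1}(\partial^* E) > 0$, and $f > 0$ pointwise yields $P_f(E) = \int_{\partial^* E} f\,d\H^{n-1} > 0$. There is no real obstacle to this proof; the only point that needs care is the choice of exponent window $n-1 < \alpha < n$, which is exactly the range in which radial shells at infinity can simultaneously capture macroscopic volume and pay asymptotically negligible perimeter.
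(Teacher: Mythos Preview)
Your proof is correct. Both you and the paper show $\IF(V)=0$ for all $V>0$ by pushing test sets to infinity in a decaying radial density, but the constructions differ in an instructive way. The paper takes $f(r)\sim 1/r$ for large $r$ and uses balls of radius $R$ centered at distance $D\gg R$ from the origin; on such a ball the density is essentially the constant $1/D$, so volume scales like $R^n/D$ and perimeter like $R^{n-1}/D$, and letting $D\to\infty$ with $R\approx (DV)^{1/n}$ kills the perimeter. You instead take $f(x)=(1+|x|^2)^{-\alpha/2}$ with $\alpha\in(n-1,n)$ and use annular shells centered at the origin. Your argument is more elementary---no need to balance two scales $R\ll D$---but it requires the stronger decay $\alpha>n-1$; with the paper's $\alpha=1$ the perimeter of your shells would not vanish. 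Conversely, the paper's off-center balls exploit the local near-constancy of the density and work even for this slower decay. You also supply the explicit verification that $\IF(V)=0$ cannot be attained, which the paper leaves implicit. A minor quibble: for $R'\in(R,2R)$ the perimeter of $\{R<|x|<R'\}$ is not literally bounded by $P_f(E_R)$, since the inner sphere at $R$ carries more weight than the one at $2R$; but it is still $O(R^{n-1-\alpha})\to 0$, which is all you need.
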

\begin{proof}
It is sufficient to consider a smooth radial density $f$ such that $f(r)=1/r$ for $r$ large enough, which is then decreasing to $0$ at infinity. Consider a ball $B_{R,D}$ centered at distance $D\gg 1$ from the origin, and with radius $1\ll R\ll D$. The volume and the perimeter of this ball are approximatively
\begin{align*}
\big| B_{R,D}\big|_f \approx \frac{\omega_n R^n}{D} \,, && P_f\big(B_{R,D}\big) = \frac{n\omega_n R^{n-1}}{D}\,.
\end{align*}
Therefore, for any given volume $V>0$ and for any distance $D$ big enough, it is possible to take some $R\approx (DV)^{1/n}$ such that the ball $B_{R,D}$ has exactly volume $V$. This ball has perimeter $P_f\big(B_{R,D}\big)\approx V^{\frac{n-1}n}/D^{\frac 1n}$, which is then arbitrarily small up to take $D$ big enough. As a consequence, $\IF(V)=0$ for any $V>0$, that is, no isoperimetric sets exist.
\end{proof}

D\'\i az et al.~\cite[Prop. 7.3]{D} show that no isoperimetric set exists in $\R^n$ with density $r^{-p}$ for $0 < p\leq n$. Our Proposition~\ref{ex:nonex} will provide a non-decreasing density on $\R^n$ for which there is no isoperimetric set of unit volume.

We can now show a positive result, stating that isoperimetric sets exist for all volumes if the density is radial and diverging. This result was known before only in $\R^2$~\cite[Theorem~2.5]{BCMR}, or under additional hypotheses in $\R^n$~\cite[Theorem~2.1]{BCMR}. The counterexamples of Propositions~\ref{non-existence-d-increasing} and~\ref{Prop2.1A} ensure that both assumptions are necessary.

\begin{theorem}\label{thm2.2}
Assume that $f$ is a (lower-semicontinuous) radial density on $\R^n$ which diverges to infinity. Then there exist isoperimetric sets for all volumes.
\end{theorem}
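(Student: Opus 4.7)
My plan is to apply the direct method of the calculus of variations. Take a minimizing sequence $(E_k)$ with $|E_k|_f=V$ and $P_f(E_k)\to\IF(V)$, extract a local subsequential limit $E_\infty$, and show that $E_\infty$ is the desired isoperimetric region. Because $f$ is radial, rotational invariance may be used to normalize each $E_k$ so as to keep a definite portion in a fixed half-space. The only nontrivial point will be ruling out loss of $f$-mass to infinity along the sequence, and this is where the hypotheses of radiality and divergence at infinity will enter.

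\textbf{Compactness and lower semicontinuity.} Because $f$ is positive and lower semicontinuous, the infimum $c_R:=\inf_{B(R)}f$ is strictly positive for every $R>0$. This yields uniform bounds $\PE(E_k;B(R))\le P_f(E_k)/c_R$ and $\VE{E_k\cap B(R)}\le V/c_R$, so by BV-compactness and a diagonal extraction a subsequence (still denoted $E_k$) converges in $L^1_{\rm loc}$ to some set $E_\infty$. Lower semicontinuity of $P_f$ under $L^1_{\rm loc}$-convergence (which continues to hold for l.s.c.\ densities) gives $P_f(E_\infty)\le\liminf_k P_f(E_k)=\IF(V)$, while Fatou yields $|E_\infty|_f\le V$. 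Moreover, since $f$ diverges any set with finite $f$-mass has finite Euclidean volume, so $E_\infty$ is essentially bounded and the complement $\R^n\setminus E_\infty$ carries infinite $f$-mass.

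\textbf{Tightness via divergence of $f$.} The crucial step is to show $|E_\infty|_f=V$. Write $f(x)=\phi(|x|)$ and $\mu(R):=\inf_{r\ge R}\phi(r)$, which tends to $+\infty$ by hypothesis. A first observation is that $\VE{E\setminus B(R)}\le V/\mu(R)\to0$ for any set with $|E|_f\le V$. Next, by Fubini applied to the measure $f\,d\mathcal{L}^n$, for each large $R$ and each $k$ one can pick a truncation radius $R_k\in[R,2R]$ with $\int_{E_k\cap S(R_k)} f\,d\H^{n-1}\le V/R$; writing $F_k:=E_k\cap B(R_k)$ and $A_k:=E_k\setminus B(R_k)$, the cut identity becomes $P_f(F_k)+P_f(A_k)\le P_f(E_k)+2V/R$. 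Suppose for contradiction that $m:=V-|E_\infty|_f>0$, so that $|A_k|_f\to m$ along a subsequence. Using $|\R^n\setminus E_\infty|_f=+\infty$, I would produce a fixed set $B^*$ near the origin, essentially disjoint from $F_k$ for large $k$, with $|B^*|_f=m$ and a finite fixed $f$-perimeter $P^*$. The competitor $G_k:=F_k\cup B^*$ then satisfies $|G_k|_f=V$, so $P_f(G_k)\ge\IF(V)$; combining with $P_f(G_k)\le P_f(F_k)+P^*\le P_f(E_k)+2V/R-P_f(A_k)+P^*$ and letting $k\to\infty$ followed by $R\to\infty$ forces $\limsup_k P_f(A_k)\le P^*$. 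The contradiction then comes from an independent lower bound $\liminf_k P_f(A_k)>P^*$.

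\textbf{Expected main obstacle.} The delicate point is precisely to establish $\liminf_k P_f(A_k)>P^*$: it encodes the intuition that far from the origin, where $f$ is large, enclosing a fixed $f$-mass is strictly more expensive than doing so near the origin. Formally, combining the pointwise bound $P_f(A)\ge\mu(R)\PE(A)$ with the Euclidean isoperimetric inequality $\PE(A)\ge c_n\VE{A}^{(n-1)/n}$ ought to yield an estimate of the form $P_f(A_k)\gtrsim\mu(R)^{1/n}|A_k|_f^{(n-1)/n}\to\infty$, which would be more than sufficient. The difficulty is that this chain needs a matching lower bound on $\VE{A_k}$ and can break down if $\phi$ concentrates $f$-volume inside thin, very high-density shells; for a merely lower-semicontinuous radial $\phi$ with no monotonicity, handling this case presumably requires a slicing or rearrangement argument that exploits the radial structure directly, so that $\partial A_k$ is forced to cross spheres of Euclidean area growing like $R^{n-1}$ regardless of the shape of $A_k$. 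This is the step I expect to cost the most work.
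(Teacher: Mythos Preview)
Your framework is correct and matches the paper's: direct method, $L^1_{\rm loc}$ compactness, lower semicontinuity, and then rule out $f$-mass escaping to infinity. You are also right that the naive chain $P_f(A)\ge\mu(R)\,\PE(A)\ge c_n\,\mu(R)\,\VE{A}^{(n-1)/n}$ breaks down for exactly the reason you state: one only has the \emph{upper} bound $\VE{A}\le|A|_f/\mu(R)$, which points the wrong way. But the proposal stops precisely where the real proof begins. The ``slicing or rearrangement argument that exploits the radial structure'' you gesture at is the entire content of the theorem, and you have not supplied it; moreover the cut-and-replace scaffolding $(F_k,A_k,B^*)$ is unnecessary and only adds bookkeeping.

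Here is the missing idea. Drop the competitor entirely and show directly that if $|E_j\setminus B(R)|_f\ge\eps>0$ along the sequence then $P_f(E_j)$ must be large. Set $S_j(r):=\H^{n-1}\big(E_j\cap S(r)\big)$, $M_j:=\sup_{r\ge R}S_j(r)$, and $f_-:=\inf_{r\ge R}f(r)$. First, one checks $P_f(E_j)\ge f_-\,M_j$ (project $\partial E_j$ beyond the maximal slice radially onto that sphere), so $M_j$ is small once $f_-$ is large. Second, since each slice is then a small portion of its sphere, the spherical isoperimetric inequality gives $\H^{n-2}\big(\partial(E_j\cap S(r))\big)\ge c_n\,S_j(r)^{(n-2)/(n-1)}$; integrating in $r$ via coarea and Vol'pert's slicing theorem yields
\[
P_f(E_j)\ \ge\ c_n\int_R^\infty S_j(r)^{\frac{n-2}{n-1}}f(r)\,dr\ \ge\ \frac{c_n}{M_j^{1/(n-1)}}\int_R^\infty S_j(r)f(r)\,dr\ \ge\ \frac{c_n\,\eps}{M_j^{1/(n-1)}}\,.
\]
Combining the two bounds gives $P_f(E_j)^{n/(n-1)}\ge c_n\,f_-^{1/(n-1)}\eps\to\infty$ as $R\to\infty$, the desired contradiction. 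Note that the mechanism is not that $\partial E_j$ is forced to cross spheres of growing Euclidean area, as your last sentence suggests, but rather that the radial slices are forced to be \emph{small} and therefore carry large relative perimeter inside each sphere; radiality of $f$ is what lets this spherical estimate be integrated against $f(r)$ without loss.
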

\begin{proof}
The basic idea of the proof is that if some volume goes off to infinity, there must be lots of tangential or radial perimeter.\par
Fix a volume $V>0$. By approximation, there is a sequence of smooth sets $E_j$ with volume $|E_j|=V$ and with $P(E_j)\searrow \IF(V)$. By the standard compactness results for sets (see for instance~\cite{AFP,M1}), we can extract a subsequence converging to a limit set $E$ with $|E|=V$ as soon as
\begin{equation}\label{upthere}
\lim_{R\to \infty} \limsup_{j\to \infty} \big| E_j \setminus B(R)\big| = 0\,.
\end{equation}
Since by lower semicontinuity one has $P(E)\leq \liminf P(E_j)=\IF(V)$, this means that $\IF(V)$ is in fact a minimum whenever condition~(\ref{upthere}) holds. As a consequence, if the result were not true, there would be then some $\eps>0$ such that, for each $R>0$ (and up to a subsequence)
\begin{equation}\label{non-vanishing}
\big| E_j \setminus B(R)\big| \geq \eps
\end{equation}
for all $j$ large enough (depending on $R$, of course!). \par
Let us then fix a sufficiently big number $R$, to be specified later, and fix also an index $j$ for which~(\ref{non-vanishing}) holds. Call $S_j(r)$ the area of the slice of $E_j$ at distance $r$ from the origin, that is,
\[
S_j(r) := \H^{n-1} \big( E_j \cap S(r) \big)\,.
\]
Hence, (\ref{non-vanishing}) reads as
\begin{equation}\label{refined-nv}
\int_R^{+\infty} f(r) S_j(r) \,dr \geq \eps\,.
\end{equation}
If we call
\begin{align*}
M_j = \max \{ S_j(r):\, r\geq R\}\,, && f_- = \min\{ f(r):\, r\geq R \}\,,
\end{align*}
then of course
\begin{equation}\label{Mjsmall}
P\big(E_j\big) \geq M_j f_-\,.
\end{equation}
Since we can choose $R$ in such a way that $f_-$ is arbitrarily big, it is admissible to assume that $M_j$ is small. In particular, for each $r\geq R$ the slice $E_j\cap S(r)$ is a small portion of the sphere $S(r)$. Hence, denoting by $p_j(r)$ the relative perimeter of $E_j$ in the slice $E_j\cap S(r)$, i.e.,
\[
p_j(r) := \H^{n-2} \Big( \partial \big( E_j \cap S(r)\big)\Big)\,,
\]
the standard isoperimetric inequality on the sphere $\S^{n-1}$ says that
\[
p_j(r) \geq c_n S_j(r)^{\frac{n-2}{n-1}} \qquad \forall\, r\geq R\,,
\]
where $c_n$ is a suitable dimensional constant. By Vol'pert Theorem (see~\cite{V}, or~\cite[Theorem~3.108]{AFP}), we know that the equality
\[
\partial \big( E_j \cap S(r)\big) = \big( \partial  E_j \big) \cap S(r)
\]
holds $\H^{n-2}$-a.e. for almost all $r$; hence by co-area formula (see~\cite{AFP, M1}) one has
\[
P(E_j) \geq \int_R^\infty p_j(r) f(r)\,dr
\geq c_n \int_R^\infty S_j(r)^{\frac{n-2}{n-1}} f(r)\,dr
\geq \frac{c_n}{M_j^{\frac{1}{n-1}}} \,\int_R^\infty S_j(r) f(r)\,dr
\geq \frac{c_n}{M_j^{\frac{1}{n-1}}} \, \eps\,,
\]
thanks to~(\ref{refined-nv}). Using now~(\ref{Mjsmall}), we derive
\[
P(E_j)^{\frac{n}{n-1}} \geq c_n\, f_-^{\frac{1}{n-1}} \eps\,,
\]
which gives a contradiction with the optimality of the sequence $E_j$, since $\eps>0$ is given while $f_-$ can be taken arbitrarily big if $R\gg 1$.
\end{proof}


\section{On the monotonicity of $\IF$}

Let us consider now another question which seems quite reasonable, that is, is it true that the isoperimetric profile $\IF$ is increasing? In other words, is it true that to enclose more volume one needs more perimeter? Theorem~\ref{Lambda-increasing} will give an affirmative answer for increasing densities. On the other hand, the answer is negative in the case of a finite total measure $\overline V=\int f<\infty$, for which $\IF(\overline V-V) = \IF(V)$ because a set and its complement have the same perimeter. 

In view of this observation, it may seem reasonable to guess that the isoperimetric function $\IF$ is increasing if the total volume of $\R^n$ is infinite, or at least if the density $f$ is diverging at infinity. Such a guess is wrong.

\begin{prop}\label{steepest}
There exists a smooth, diverging density $f$ on $\R^n$ such that $\IF(1)=0$ and $\IF(t)\geq |1-t|^{(n-1)/n}$ for all $1/2 \leq t \leq 3/2$.
\end{prop}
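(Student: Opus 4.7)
The strategy is to replicate the construction of Proposition~\ref{non-existence-d-increasing}, but with each ``pocket'' carefully tuned so that volume exactly $1$ is cheap while every nearby volume must pay the Euclidean isoperimetric price. I would start from a smooth radial density $f_0$ on $\R^n$ that diverges at infinity and is bounded below by a large constant. I then pick pairwise disjoint balls $B_i=B(x_i,r_i)$ with $|x_i|\to\infty$ and $r_i\to 0$ so fast that $P_{f_0}(B_i)=\delta_i\to 0$, and inside each $B_i$ smoothly increase $f_0$, without altering it on $\partial B_i$, so that $|B_i|_f=1$. The resulting $f$ is smooth and diverging, and one immediately obtains $\IF(1)\le P_f(B_i)=\delta_i\to 0$, hence $\IF(1)=0$.

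The extra care lies in designing the interior profile on each $B_i$ (for instance by taking a fixed model bump on the unit ball and rescaling by $r_i$) so that the following two \emph{local} isoperimetric inequalities both hold:
\begin{equation*}
  P_f(F)\ge \min\bigl(|F|_f,\,1-|F|_f\bigr)^{(n-1)/n}\qquad\text{for every }F\subseteq B_i,
\end{equation*}
and $P_f(G)\ge |G|_f^{(n-1)/n}$ for every $G\subseteq \R^n\setminus\bigcup_i B_i$ with $|G|_f\le 3/2$. The first inequality is possible because a unit mass is compressed into a Euclidean ball of radius $r_i\to 0$, so the average density inside $B_i$ is $\gtrsim r_i^{-n}$ and the resulting weighted isoperimetric constant is $\gtrsim r_i^{-1}$, which is $\ge 1$ for $i$ large; the second follows from the Euclidean isoperimetric inequality once the lower bound for $f_0$ is chosen large enough and we restrict to the relevant range of volumes.

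Given these two inequalities, the lower bound for $t\in[1/2,3/2]\setminus\{1\}$ is a direct bookkeeping argument. Write $E=E_0\sqcup\bigsqcup_i E_i$ with $E_i=E\cap B_i$, $s_i=|E_i|_f\in[0,1]$, $E_0=E\setminus\bigcup_i B_i$, and $s_0=|E_0|_f$. The pieces sit in pairwise disjoint open regions, so perimeters add and
\begin{equation*}
  P_f(E)\;\ge\; s_0^{(n-1)/n} + \sum_{0<s_i<1}\min\bigl(s_i,1-s_i\bigr)^{(n-1)/n},
\end{equation*}
plus the nonnegative contributions $\delta_i$ from the fully filled pockets. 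Since $t\le 3/2<2$, at most one $s_i$ equals $1$. Splitting the partial pockets according to whether $s_i\ge 1/2$ or $s_i<1/2$, writing $a$ for the number of ``full-half'' pockets, $c$ for their total mass, and $b$ for the mass of the ``low'' pockets, the constraint $t\le 3/2$ forces $2c\le a+1$; together with the subadditivity $x^{(n-1)/n}+y^{(n-1)/n}\ge(x+y)^{(n-1)/n}$, this shows that the right-hand side is always $\ge |1-t|^{(n-1)/n}$ (separately in the cases $t<1$ and $t>1$).

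The main obstacle is to realise the first local inequality with the exact constant $1$, uniformly in $i$, while simultaneously interpolating smoothly to the background on $\partial B_i$ and preserving $P_f(B_i)\to 0$. This is essentially a question of engineering a single model density on the unit ball and checking that its scaling gives both the prescribed constant and the smooth match on the pocket boundary; once this is done, the bulk estimate and the case analysis above close the argument.
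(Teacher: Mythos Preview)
Your plan has a real gap at the sentence ``The pieces sit in pairwise disjoint open regions, so perimeters add.'' Cutting a set $E$ along the spheres $\partial B_i$ \emph{creates} boundary, it does not preserve it: for each $i$ one has
\[
P_f(E_0)+P_f(E_i)=P_f(E)+2\,\H^{n-1}_f\big(E^{(1)}\cap\partial B_i\big),
\]
so in general $P_f(E)\le P_f(E_0)+\sum_i P_f(E_i)$, which is the wrong direction for your lower bound. Concretely, take $E$ to be $B_1$ together with a small cap of $f$-volume $\eps$ attached on the outside of $\partial B_1$. Then $s_1=1$, $s_0=\eps$, and your displayed inequality would require $P_f(E)\ge \eps^{(n-1)/n}$; but $P_f(E)$ is at most $\delta_1$ plus the \emph{relative} perimeter of the cap in the exterior, and since the density on $\partial B_1$ is just $f_0$, that relative perimeter can be made $o(\eps^{(n-1)/n})$ by taking a thin collar rather than a round cap. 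If instead you intend the local inequalities for relative perimeters, then the second one, $P_f(G;\R^n\setminus\bigcup_i\overline{B_i})\ge|G|_f^{(n-1)/n}$, fails for exactly this kind of collar around $\partial B_i$.

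This is precisely the difficulty the paper's proof is built to overcome. There one does \emph{not} raise the density only inside $B_i$; one also raises it by the same large constant $K_i$ on an annulus $A_i$ of outer radius $r_i^+=\sqrt{r_i}\gg r_i$ surrounding $B_i$ (leaving $f=f_0$ on the thin sphere $\partial B_i$ so that $P_f(B_i)=P_{f_0}(B_i)$ stays small). The decomposition is then threefold, $E_1^i=E\cap B_i$, $E_2^i=E\cap A_i$, $E_3=E\setminus\bigcup_i\widetilde B_i$, and the key estimates are of the form
\[
\H^{n-1}\big(\partial E\cap A_i\big)\ \ge\ \tfrac13\,\H^{n-1}\big(\partial E_2^i\big),\qquad
\H^{n-1}\big(\partial E\setminus\textstyle\bigcup_i\widetilde B_i\big)\ \ge\ \tfrac13\,\H^{n-1}\big(\partial E_3\big),
\]
obtained from the geometry $r_i\ll r_i^+\ll 1$ and the far separation of the $\widetilde B_i$. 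These replace your ``perimeters add'' step: they say that the genuine boundary of $E$ in each region already controls the full boundary of the corresponding piece, so the artificial boundary introduced by cutting is absorbed. Without the high-density buffer annuli there is nothing to pay for crossing $\partial B_i$, and the bookkeeping cannot close.
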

\begin{proof}
The result will be achieved with a modification of the argument of Proposition~\ref{non-existence-d-increasing}. We start with a smooth, radial, increasing and diverging density $f_0\geq 2$ for which
\begin{equation}\label{startingcase}
\IF_{f_0} (t) \geq 4 t^{(n-1)/n} \qquad \forall\, 0\leq t \leq 2\,.
\end{equation}
(Notice that this is clearly possible, since for the standard Euclidean density $f_E\equiv 1$ one has $\IF_{f_E}(t)=C(n)t^{(n-1)/n}$, and then it is enough to select $f_0$ big and diverging slowly enough). Let us also select a sequence of open balls $B_i={\rm Int} \big(B(x_i,r_i)\big)$ far from each other and from the origin, and with $P_{f_0}(B_i) = 1/i$. Notice that, since $f_0$ is diverging, each ball $B_i$ can be chosen to have a volume much smaller than $1/i^2$, and a radius $r_i$ much smaller than $1/i$. Call $r_i^+=\sqrt{r_i}$, so that $r_i\ll r_i^+\ll 1$, and let us also call $A_i={\rm Int} \big(B(x_i,r_i^+)\big)\setminus B(x_i,r_i)$ the open annulus of radii $r_i$ and $r_i^+$, and $\widetilde B_i=B(x_i,r_i^+)$ the ball with the same center as $B_i$ and radius $r_i^+$.\par
Let now $K_i$ be a big constant such that
\[
\int_{B_i} f_0 + K_i = 1\,,
\]
and define the density
\[
\tilde f(x) = \left\{\begin{array}{ll}
f_0(x)+K_i &\hbox{if $r_i\neq |x-x_i|<r_i^+$}\,,\\
f_0(x) &\hbox{otherwise}
\end{array}\right.
\]
(notice that $\tilde f$ is well-defined since the balls $\widetilde B_i$ are disjoint). By construction, $|B_i|_{\tilde f}=1$ for every $i$, while $P_{\tilde f}(B_i)=P_{f_0}(B_i)=1/i$, thus $\IF_{\tilde f}(1)=0$. We aim to show that, for a generic set $E\subseteq \R^n$ with $|E|_{\tilde f} \in (0,2)$, one has
\begin{equation}\label{benclaim}
P_{\tilde f}(E) \geq 2 \min\Big\{t^{(n-1)/n},\, |1-t|^{(n-1)/n}\Big\}\,,
\end{equation}
since the thesis will then directly follow just by substituting $\tilde f$ with a very similar but smooth density $f$ satisfying $f=\tilde f$ on every $\partial B_i$.\par
We take then a set $E\subseteq \R^n$ with $|E|_{\tilde f}\in (0,2)$, and we want to prove~(\ref{benclaim}). Let us define
\begin{align*}
E_1^i:= E \cap B_i\,, && E_2^i:= E\cap A_i\,, && E_3:= E \setminus \cup_i \big( E_1^i\cup E_2^i\big)\,.
\end{align*}
Since the balls $\widetilde B_i$ are far enough from each other, one has
\[
\H^{n-1}\Big(\partial E \setminus \cup_i \widetilde B_i \Big) \geq \frac{1}{3}\, \H^{n-1}\big( \partial E_3\big)\,;
\]
in particular, since $f_0$ is diverging slowly enough and $\tilde f = f_0$ out of $\cup_i \widetilde B_i$, by~(\ref{startingcase}) we have
\begin{equation}\label{sabamo}
\int_{\partial E \setminus \cup_i \widetilde B_i } \tilde f(x)\, d\H^{n-1}(x) \geq \frac{1}{4}\, P_{\tilde f} (E_3) = \frac{1}{4}\, P_{f_0} (E_3) \geq | E_3 |_{\tilde f}^{(n-1)/n}\,.
\end{equation}
Consider now $E_2^i$ for a generic $i$: since $r_i^+\gg r_i$ and since $|E_2|_{\tilde f}\leq 2\ll |A_i|_{\tilde f}$, one has 
\[
\H^{n-1}\Big(\partial E \cap  A_i \Big) \geq \frac{1}{3} \H^{n-1}\big( \partial E_2^i\big)\,.
\]
Hence, exactly as before we deduce
\begin{equation}\label{sabamo2}
\int_{\partial E \cap A_i } \tilde f(x)\, d\H^{n-1}(x) \geq \frac{1}{4}\, P_{f_0+K_i} (E_2^i) \geq |E_2^i|_{f_0 + K_i}^{(n-1)/n}
= | E_2^i |_{\tilde f}^{(n-1)/n}\,,
\end{equation}
where the second inequality comes from the fact that $f_0$ is almost constant in $A_i$ (say, $f_0\approx C_i$ in $A_i$) and then by~(\ref{startingcase}) we have
\[\begin{split}
P_{f_0+K_i} (E_2^i) &\approx \frac{C_i + K_i}{C_i}\,  P_{f_0}(E_2^i)
\geq \frac{C_i + K_i}{C_i}\,  4 \Big(|E_2^i|_{f_0}\Big)^{(n-1)/n}\\
&\approx \frac{C_i + K_i}{C_i}\,  4 \bigg(\frac{C_i}{C_i+K_i}\,|E_2^i|_{f_0+K_i}\bigg)^{(n-1)/n}
= \bigg(\frac{C_i + K_i}{C_i}\bigg)^{1/n}\,  4 \bigg(|E_2^i|_{f_0 + K_i}\bigg)^{(n-1)/n} \,.
\end{split}\]
Finally, let us consider a generic $E_1^i$. Recalling that $|B_i|_{\tilde f}=1$, one has
\[
\H^{n-1}\Big( \partial E \cap B_i \Big)  \geq \frac{1}{3}\, \min \bigg\{ \H^{n-1}\big( \partial E_3^i\big) ,\, \H^{n-1}\Big( \partial \big(B_i\setminus E_3^i\big)\Big) \bigg\}\,;
\]
hence, arguing exactly as in~(\ref{sabamo}) and~(\ref{sabamo2}), we obtain
\begin{equation}\label{sabamo3}
\int_{\partial E \cap B_i } \tilde f(x)\, d\H^{n-1}(x) \geq  \min \Big\{ \big(| E_3^i |_{\tilde f}\big)^{(n-1)/n}, \, \big(1 - | E_3^i |_{\tilde f}\big)^{(n-1)/n}\Big\}\,.
\end{equation}
This concludes the proof, because~(\ref{benclaim}) follows by~(\ref{sabamo}), (\ref{sabamo2}) and~(\ref{sabamo3}) since clearly
\[
P_{\tilde f} (E) = \int_{\partial E} \tilde f\, d\H^{n-1}
\geq \int_{\partial E \setminus \cup_i \widetilde B_i} \tilde f\, d\H^{n-1}+
\sum_i \bigg(\int_{\partial E \cap A_i } \tilde f\, d\H^{n-1}+
\int_{\partial E \cap B_i } \tilde f\, d\H^{n-1}\bigg)\,.
\]
\end{proof}

We conclude this section with our positive result, which says that $\IF$ is increasing if the density $f$ is non-decreasing in the sense of the following definition.
\begin{definition}\label{defnondecr}
Given a density $f$, not necessarily radial, we say that \emph{$f$ is non-decreasing} if for any $\theta\in \S^{n-1}$ the function $t\mapsto f(t\theta)$ is non-decreasing in $\R^+$.
\end{definition}

\begin{theorem}\label{Lambda-increasing}
Let $f$ be a density on $\R^n$ which is non-decreasing (but not necessarily radial). Then the isoperimetric profile $\IF$ is non-decreasing. Moreover, if isoperimetric sets exist for all volumes, then $\IF$ is strictly increasing. Indeed, if there exist an isoperimetric set of volume $V$, then $\IF(V')<\IF(V)$ for all $V'<V$.
\end{theorem}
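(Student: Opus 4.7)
The plan is a scaling argument. For any $t\in(0,1]$ and any set $E$ of finite perimeter, a change of variables in the defining integrals gives
\begin{align*}
|tE|_f = t^n \int_E f(ty)\,dy, && P_f(tE) = t^{n-1}\int_{\partial E} f(ty)\,d\H^{n-1}(y),
\end{align*}
and since $f(ty)\le f(y)$ pointwise (by non-decreasingness of $f$ along rays, Definition~\ref{defnondecr}, together with $t\le 1$), we obtain
\begin{align*}
|tE|_f \le t^n |E|_f\,, && P_f(tE) \le t^{n-1} P_f(E)\,.
\end{align*}

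For the non-decreasing statement I would fix $V'<V$ and $\delta>0$ and choose a bounded set $E$ with $|E|_f=V$ and $P_f(E)\le\IF(V)+\delta$ (obtained by approximating a near-minimizer by smooth bounded sets together with a small rescaling to restore the volume). Because $E$ is bounded, the map $g(t):=|tE|_f$ is continuous on $[0,1]$ by dominated convergence, with $g(0)=0$ and $g(1)=V$; hence there exists $t\in(0,1]$ with $g(t)=V'$. The scaling inequality then gives
$$\IF(V')\le P_f(tE)\le t^{n-1}P_f(E)\le P_f(E)\le\IF(V)+\delta,$$
and letting $\delta\to 0$ yields $\IF(V')\le\IF(V)$.

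For the strict inequality, let $E$ be an isoperimetric set of volume $V$; invoking the boundedness results of Section~\ref{seccon} (or, failing those, a truncation-and-rescaling argument analogous to the one above) we may assume $E$ is bounded, so that $g$ is again continuous on $[0,1]$. Since $g(1)=V>V'$, any $t$ with $g(t)=V'$ must satisfy $t<1$ strictly, whence
$$\IF(V')\le P_f(tE)\le t^{n-1}P_f(E)=t^{n-1}\IF(V)<\IF(V).$$
The only technical subtlety is the continuity (and hence the surjectivity onto $[0,V]$) of $g$, which could fail along a jump of $f$ if $E$ were unbounded; working with bounded competitors is the cleanest way to sidestep this, and explains why the strict statement must lean on the boundedness theory developed in the rest of the paper.
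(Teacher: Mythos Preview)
Your scaling argument is correct and takes a genuinely different route from the paper. The paper instead intersects $E$ with balls, $E(r)=E\cap B(r)$, and proves the key inequality $P\big(E(r)\big)<P(E)$ via a projection argument: the radial projection $\partial E\setminus B(r)\to S(r)$ is strictly $1$-Lipschitz and its image covers $\partial E(r)\setminus\partial E$, so the perimeter integral drops (using that $f$ is non-decreasing along rays). Continuity of $r\mapsto|E(r)|_f$ is then automatic. Your approach is more elementary---no projection is needed, and the strict inequality comes transparently from the factor $t^{n-1}<1$. One advantage of the paper's construction is that the inequality $P(E(r))<P(E)$ is reused verbatim in later proofs (notably Theorem~\ref{thba}).

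Two corrections to your write-up. First, the boundedness detour is unnecessary: $g(t)=t^n\int_E f(ty)\,dy$ is continuous on $(0,1]$ for \emph{any} $E$ with $|E|_f<\infty$. Indeed, each ray function $r\mapsto f(r\theta)$ is non-decreasing, hence has at most countably many discontinuities; by Fubini in polar coordinates the set $D=\{x:r\mapsto f(rx/|x|)\ \text{is discontinuous at}\ r=|x|\}$ is Lebesgue-null, so for fixed $t_0$ one has $f(t_ky)\to f(t_0y)$ for a.e.\ $y$, and dominated convergence (with dominator $f(y)\in L^1(E)$, valid since $t_k\le 1$) yields continuity. Combined with $g(t)\le t^n V\to 0$, the intermediate value theorem applies directly, and any $t$ with $g(t)=V'<V$ automatically satisfies $t<1$. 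Second, invoking the boundedness theory here would in any case be circular: Theorem~\ref{thba} explicitly quotes the inequality $P(E(r))<P(E)$ from the proof of the present theorem. (Also, the boundedness results live in Section~\ref{secbound}, not Section~\ref{seccon}.) Both issues disappear once you drop the boundedness step, which, as just argued, you may.
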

\begin{proof}
Take any set $E$ of finite perimeter $P(E)$ and of volume $|E|=V$. For any $r>0$ such that $E \not \subseteq B(r)$, define
\[
E(r) := E \cap B(r) \subsetneq E\,.
\]
The main observation of the proof is the validity of
\begin{equation}\label{perimeter-decreases}
P\big( E(r)\big) < P (E)\,.
\end{equation}
To show this inequality, consider the projection $\alpha:\partial E \setminus B(r) \to S(r)$. It is immediate that $\alpha$ is strictly $1-$Lipschitz, and moreover the image $I(\alpha)$ of $\alpha$ satisfies
\begin{equation}\label{evennotbounded}
I(\alpha) \supseteq \Big( \partial E(r) \setminus \partial E \Big) \,,
\end{equation}
(which in turn is contained in $S(r)$). The validity of~(\ref{evennotbounded}) is trivial if $E$ is bounded, but it is true even if $E$ is unbounded. Indeed, let
\[
H=\Big(\partial E(r) \setminus \partial E \Big)\setminus I(\alpha)\,,
\]
and notice that $E$ containes the whole cone
\[
C = \Big\{ \lambda x:\, \lambda\geq 1,\, x\in H\Big\}\,.
\]
Since the density $f$ is increasing, the cone $C$ has infinite volume, and so does $E$, unless $\H^{n-1}(H)=0$, which then ensures us that the inclusion~(\ref{evennotbounded}) is true up to measure $0$.\par
As a consequence, by co-area formula~\cite{AFP, M1} and the fact that $f$ is increasing, one has
\[\begin{split}
P\big(E(r)\big) &= \int_{\partial E \cap B(r)} f(|x|)\,d\H^{n-1}(x) + \int_{\partial E(r) \setminus \partial E} f(\theta) \,d\H^{n-1}(\theta) \\
&< \int_{\partial E \cap B(r)} f(|x|)\,d\H^{n-1}(x) + \int_{\partial E \setminus B(r)} f\big(\alpha(x)\big)\,d\H^{n-1}(x) \\
&\leq \int_{\partial E} f(|x|)\,d\H^{n-1}(x) = P(E)\,,
\end{split}\]
so that~(\ref{perimeter-decreases}) has been established. Now, for every $0<V'<V$ one has the existence of some $r(V')$ with the property that
\[
\big| E(r(V'))\big| = V'\,,
\]
hence
\begin{equation}\label{strict}
\IF(V') \leq P\big( E(r(V'))\big)  < P(E)\,.
\end{equation}
Our conclusions now follow quickly. Indeed, if there exists an isoperimetric set $E$ of volume $|E|=V$, then inequality~(\ref{strict}) yields
\[
\IF(V') < P(E) = \IF(V)\,.
\]
This proves the second and the third claim of the theorem. Concerning the first one, for any $\eps>0$ we can take a set $E$ such that
\begin{align*}
|E| = V\,, && P(E) \leq \IF(V) +\eps\,.
\end{align*}
Hence, (\ref{strict}) tells us
\[
\IF(V') < P(E) \leq \IF(V) +\eps\,,
\]
for any $V'<V$, and since $\eps$ was arbitrary one gets the inequality $\IF(V')\leq \IF(V)$ for all $V'<V$.
\end{proof}

\section{Boundedness of isoperimetric sets\label{secbound}}

In this section we consider another property which seems reasonable. Assume that there exists some isoperimetric set: is it then obvious that it must be bounded? It appears reasonable that it should be so, at least when the density is increasing in the sense of Definition~\ref{defnondecr}. In fact, we can show that this is what always happens in the two-dimensional case (Proposition~\ref{propisobound2}), but that this is false in dimension $n\geq 3$ (Proposition~\ref{ex:nonex}). However, the result is true for any dimension if we consider an increasing density which is also radial (Theorem~\ref{thba}). The plan of the section is then the following: first we show the two-dimensional result, then we present our ``fundamental brick'' for the following constructions. Then, we show with two counterexamples, Propositions~\ref{ex:nonex} and~\ref{nonbdd}, that the two-dimensional result is too strong to hold in dimension $n$. Finally, we give the general $n-$dimensional results, Theorems~\ref{thba} and~\ref{lastfrank}.\bigskip

Let us state and prove our result concerning the two-dimensional case.

\begin{prop}[{\bf Boundedness in $\R^2$}\hspace{0pt}]\label{propisobound2}
Let $f$ be a density in $\R^2$ which either is non-decreasing (in the sense of Definition~\ref{defnondecr}) or approaches a finite limit $a>0$ at infinity. Then every isoperimetric set is bounded.
\end{prop}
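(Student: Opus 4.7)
Suppose, for contradiction, that $E$ is an isoperimetric set of volume $V$ in $\R^2$ that is unbounded. I would argue by a truncate-and-replace strategy: delete the far-away mass of $E$ past a large radius $r$, add back the lost $f$-volume via a small Euclidean disk placed where $f$ is cheapest, and show that for suitable $r$ this competitor strictly beats $P_f(E)$, contradicting optimality.

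For $r>0$, set $E(r):=E\cap B(r)$ and introduce
\[
v(r):=V-|E(r)|_f,\qquad p(r):=\int_{\partial E\setminus B(r)} f\,d\H^{1},\qquad s(r):=\int_{E\cap S(r)} f\,d\H^{1}.
\]
Under either hypothesis the density is bounded below by a positive constant $c_0$ (combine lower-semicontinuity and positivity on compact sets with either $f$ non-decreasing, forcing $f\ge f(0)>0$, or $f\to a>0$ outside a large ball), so the finiteness of $|E|_f$ and $P_f(E)$ forces $v,p,s\to 0$ as $r\to\infty$; unboundedness of $E$ yields $v(r)>0$ for every $r$. Decomposing $\partial E(r)=(\partial E\cap B(r))\cup(E\cap S(r))$ gives the truncation identity
\[
P_f(E(r))=P_f(E)-p(r)+s(r).
\]

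The competitor is $E^{*}:=E(r)\cup F$, where $F$ is a small Euclidean disk disjoint from $E(r)$ with $|F|_f=v(r)$. I would place $F$ in the ``cheapest'' region: very far from the origin in the case $f\to a$, where $f\approx a$ gives $P_f(F)\to 2\sqrt{\pi a\,v(r)}$; or near the origin in the non-decreasing case, where $f$ attains its infimum $f_0=f(0)>0$ and $P_f(F)\to 2\sqrt{\pi f_0\,v(r)}$. Then $|E^{*}|_f=V$, so optimality demands $P_f(F)\geq p(r)-s(r)$, and a strict violation produces the contradiction. To get it, I would apply the Euclidean isoperimetric inequality to the set $E\setminus B(r)$, contained in $\{f\ge f_-(r)\}$ with $f_-(r):=\inf_{E\setminus B(r)}f$, to derive
\[
p(r)+s(r)\geq 2\sqrt{\pi\,f_-(r)\,v(r)}.
\]
In the non-decreasing case, $f_-(r)>f_0$ strictly for $r$ large (excluding the trivial case $f\equiv f_0$, which reduces to the Euclidean problem for which the balls are already bounded); in the $f\to a$ case, the strict excess over $2\sqrt{\pi a\,v(r)}$ is extracted from the fact that $E\setminus B(r)$ is attached to $S(r)$ and is therefore not a free Euclidean disk. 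The last ingredient is to choose $r_n\to\infty$ along which $s(r_n)=o(\sqrt{v(r_n)})$; such a sequence exists since $\int_0^\infty s(r)\,dr=V<\infty$ (by coarea) prevents $s$ from remaining comparable to $\sqrt{v}$ on a full half-line. Along such $r_n$ the estimate upgrades to $p(r_n)-s(r_n)>2\sqrt{\pi c\,v(r_n)}$ with $c\in\{f_0,a\}$, strictly exceeding $P_f(F)$.

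The hardest part is extracting the strict gap in the Euclidean isoperimetric inequality for $E\setminus B(r)$ in the degenerate configurations where $f$ is exactly constant on a neighborhood of infinity or along a full ray. I would handle these by invoking the radial-projection argument from the proof of Theorem~\ref{Lambda-increasing}: in the non-decreasing case it yields $p(r)>s(r)$ with a quantitative gap proportional to the radial extent of $\partial E\setminus B(r)$, and a parallel projection argument in the $f\to a$ case (using that shifting the far component inward or outward preserves $f$-volume to leading order while strictly changing $f$-perimeter unless $f$ is constant, in which case isoperimetric sets are Euclidean balls and bounded).
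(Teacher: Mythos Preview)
Your truncate-and-replace strategy has the right spirit, but there is a genuine gap in the comparison of perimeters that your last paragraph does not close.

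The problem is one of scaling. Your replacement disk $F$ costs $P_f(F)\sim 2\sqrt{\pi c\,v(r)}$, and your lower bound for the savings is also of order $\sqrt{v(r)}$: along the subsequence with $s(r_n)=o(\sqrt{v(r_n)})$ you get $p(r_n)-s(r_n)\geq\big(2\sqrt{\pi f_-(r_n)}+o(1)\big)\sqrt{v(r_n)}$. In the $f\to a$ case both leading constants tend to $2\sqrt{\pi a}$, and you are left comparing two quantities of the form $\big(2\sqrt{\pi a}+o(1)\big)\sqrt{v(r_n)}$ with no control on which $o(1)$ wins (indeed $f_-(r_n)$ may well be \emph{below} $a$). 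Your proposed fix---extract strict excess because $E\setminus B(r)$ is ``attached to $S(r)$ and therefore not a free disk''---fails precisely along the subsequence you chose: $s(r_n)=o(\sqrt{v(r_n)})$ means the attachment is negligible, so $E\setminus B(r_n)$ can be arbitrarily close to a free disk in a region of density arbitrarily close to $a$. In the non-decreasing case you face analogous degeneracies: $f$ may be constant along rays so that $f_-(r)=f_0$, and $E$ may cover the region where $f$ is cheapest so that $F$ cannot be placed there disjointly.

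The paper avoids this delicate matching of $\sqrt{v}$-constants by replacing the lost volume at \emph{linear} cost. It first uses the planar fact $P_f(E_i)\geq c\cdot\mathrm{diam}(E_i)$ to see that every connected component of $E$ is bounded, so an unbounded $E$ has infinitely many components with volumes and diameters tending to $0$. Then, instead of inserting a fresh disk, it fixes one component $E_1$ and uses the first variation formulae (Lemma~\ref{lemma-expansion}) to inflate it: $|E_1^\eps|_f=|E_1|_f+\eps$ with $P_f(E_1^\eps)\leq P_f(E_1)+(H_f(E)+1)\eps$. Now remove a distant component $E_j$ of volume $\eps$; in both hypotheses one checks easily that $P_f(E_j)\geq (H_f(E)+2)\eps$ for $j$ large, and the contradiction is immediate. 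The key idea you are missing is to exploit the constant generalized mean curvature of the isoperimetric set to add volume cheaply, so that you only need to beat a linear (not square-root) threshold when removing the tail.
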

\begin{proof}
Suppose that $E\subseteq \R^2$ is an isoperimetric set, and let $E_i$ be its closed connected components. First of all, we claim that every connected component $E_i$ is bounded: indeed, since $f$ is bounded below by some constant $c>0$ by assumption, 
\begin{equation}\label{per>diam}
+\infty>P_f(E_i) \geq c \PE (E_i) \geq c \, {\rm diam}\, E_i\,.
\end{equation}
As a consequence, we can assume that $E$ has infinitely many connected components; otherwise the claim is already proven. Moreover, each connected component is isolated by definition.\par
Fix now a connected component, say $E_1$. Making small variations of $E_1$ with respect to $u$, and keeping in mind formulae~(\ref{expansion-volume}) and~(\ref{expansion-perimeter}), we can consider suitable variations $E_1^\eps$ of $E_1$ for all $0<\eps<\bar\eps$ which do not intersect any of the other $E_i$'s and such that
\begin{align}\label{firstvarfirstcomp}
\big| E_1^\eps\big|_f = \big|E_1\big| + \eps\,, && P_f\big( E_1^\eps\big) \approx P_f \big( E_1\big) + \eps  H_f(E)
\leq P_f \big( E_1\big) + \eps \big(  H_f(E) + 1 \big)   \,,
\end{align}
where $H_f(E)$ is the generalized mean density of $E$, which is constant since $E$ is isoperimetric. Let us now consider another component $E_j$, and distinguish two cases.\par
The first case is when $f$ approaches a finite limit $a>0$ at infinity. It is admissible to assume that $E_j$ has distance at least $R$ from the origin, and that it has volume smaller than $\bar\eps$ (because $E$ has infinitely many connected components). This implies
\begin{align*}
\eps:=\big| E_j \big|_f \leq (a+\delta) \big| E_j\big|_{\rm eucl}\,, && P_f\big( E_j \big) \geq (a-\delta) \PE\big( E_j\big)\,,
\end{align*}
and by the smallness of the volume of $E_j$ we can assume
\begin{equation}\label{palso}
\big| E_j\big|_f \leq \frac{1}{H_f(E)+2}\, P_f(E_j)\,.
\end{equation}
Call now $\widetilde E$ the set we get from $E$ by removing $E_j$ and replacing $E_1$ by $E_1^\eps$: by construction $\big| \widetilde E\big|_f = \big| E\big|_f$, and thanks to~(\ref{firstvarfirstcomp}) and~(\ref{palso}) we also have
\[
P_f \big(\widetilde E\big) = P_f\big( E\big) + P_f\big( E_1^\eps \big) - P_f\big(E_1\big) - P_f\big( E_j\big)
\leq P_f\big( E\big) - \eps < P_f\big( E\big)\,,
\]
contradicting the minimality property of $E$.\par
The second case is when $f$ is nondecreasing. We will argue in a similar way as in the first case: writing for brevity $\rho e^{i\theta} = (\rho\cos \theta, \rho\sin\theta )$, for all relevant $\theta$ let
\begin{align*}
\rho^-(\theta) &:= \inf \big\{ \rho>0 :\, \rho e^{i\theta}\in E_j\big\}\,, &
\rho^+(\theta) &:= \sup \big\{ \rho>0 :\, \rho e^{i\theta}\in E_j\big\}\,,\\
\rho_m &:= \min \big\{ \rho^-(\theta)\big\}\,, &
\rho_M &:= \max \big\{ \rho^+(\theta)\big\}\,.
\end{align*}
We are then in the position of evaluating the perimeter and volume of $E_j$. Concerning the volume, one has
\begin{equation}\label{estvol}\begin{split}
\eps:&=\big| E_j \big|_f = \int \int_{\rho^-(\theta)}^{\rho^+(\theta)} f\big( \rho e^{i\theta}\big) \Chi{E_j}\big(\rho e^{i\theta} \big)\, \rho d\rho \, d\theta\\
&\leq \rho_M \int f\big( \rho^+(\theta) e^{i\theta}\big) \int_{\rho^-(\theta)}^{\rho^+(\theta)}  \Chi{E_j}\big(\rho e^{i\theta} \big)\,  d\rho \, d\theta
\leq \rho_M  L \int f\big( \rho^+(\theta) e^{i\theta}\big) d\theta\,,
\end{split}\end{equation}
where
\[
L := \max \int_{\rho^-(\theta)}^{\rho^+(\theta)}  \Chi{E_j}\big(\rho e^{i\theta} \big)\,  d\rho
\]
is the (weighted) length of the maximal radial slice of $E_j$. Concerning the perimeter, let
\[
\partial E_j^+ := \Big\{ \rho^+(\theta) e^{i\theta}:\, \rho^+(\theta)>0\Big\} \subseteq \partial E_j\,,
\]
and let $\H^1_f$ denote the Hausdorff one-dimensional measure on $\R^2$ with density $f$. We can then evaluate
\begin{equation}\label{estper}
P_f\big( E_j\big) = \H^1_f\big( \partial E_j\big)
\geq \H^1_f \big( \partial E_j^+ \big)
\geq \int f\big( \rho^+(\theta)e^{i\theta} \big) \rho^+(\theta)\, d\theta
\geq \rho_m \int f\big( \rho^+(\theta)e^{i\theta} \big) \, d\theta\,.
\end{equation}
Hence, putting together~(\ref{estvol}) and~(\ref{estper}), we find
\begin{equation}\label{firstvarjcomp}
P_f\big( E_j\big)  \geq \frac{1}{L}\,  \frac{\rho_m}{\rho_M}\,   \big| E_j \big|_f\,.
\end{equation}
Recall now that the volumes and diameters of the components $E_j$ go to 0. From this, we can assume that $\eps\leq \bar\eps$, as well as that
\begin{equation}\label{also}
\frac{1}{L}\, \frac{\rho_m}{\rho_M}> H_f(E)+2 \,.
\end{equation}
Consequently, if we call $\widetilde E$ the set we get from $E$ by removing $E_j$ and replacing $E_1$ by $E_1^\eps$, we have that $\big| \widetilde E\big|_f = \big| E\big|_f$, and thanks to~(\ref{firstvarfirstcomp}), (\ref{firstvarjcomp}) and~(\ref{also}) we also have
\[
P_f \big(\widetilde E\big) = P_f\big( E\big) + P_f\big( E_1^\eps \big) - P_f\big(E_1\big) - P_f\big( E_j\big)
\leq P_f\big( E\big) - \eps < P_f\big( E\big)\,,
\]
contradicting the assumption that $E$ is an isoperimetric set.
\end{proof}
Proposition~\ref{ex:nonex} will show with a counterexample that Proposition~\ref{propisobound2} is no longer true in dimension $n> 2$. The dimension $n=2$ played a crucial role in~(\ref{per>diam}) when we estimated the perimeter of a connected set by its diameter. Such an estimate fails in dimension $n \geq 3$, since thin tentacles can have large diameter and small perimeter.\bigskip

Let us now introduce the fundamental brick for our future examples of this section. We work in $\R^3$ for simplicity, but the same construction works for
any dimension $n\geq 3$. Let us use spherical coordinates $\rho,\, \phi,\, \theta$. This means that, for any $\rho\geq 0$, $0\leq \theta\leq 2\pi$, and $0 \leq \phi \leq \pi$ we denote by $(\rho,\,\theta,\,\phi)$ the point whose standard Euclidean coordinates are
\[
\Big( \rho \cos\phi,\, \rho\sin\phi \cos\theta,\, \rho\sin\phi\sin\theta \Big)\,.
\]
\begin{definition}[{\bf The fundamental brick}\hspace{0pt}]\label{funbri}
Let $R_1<R_2$ be positive big numbers, let $\phi_0>0$ be a small angle, and let $1\ll N\ll M\ll W$ be three big numbers. Let us define the set $E\subseteq \R^3$, as shown in Figure~\ref{Figure:E}: since our whole construction does not depend on $\theta$, the drawing shows just the plane where $\theta=0$ or $\theta=\pi$. The boundary of $E$ is given by
\[
\partial E:=\Big\{ \phi=\phi_0,\, R_1\leq \rho\leq R_2 \Big\} \cup \Big\{  0 \leq \phi \leq  \phi_0,\, \rho=  R_2 \Big\} \cup \Gamma\,,
\]
where $\Gamma$ is a spherical cap of radius $1$ containing the circle $\{\rho=R_1,\, \phi=\phi_0\}$. Let $\gamma: (0,\phi_0)\to \R^+$ be the function such that
\[
\Gamma=\big\{ (\rho,\theta,\phi):\, 0 < \phi < \phi_0,\, \rho=\gamma(\phi)\big\}\,.
\]
\begin{figure}[htbp]
\includegraphics[width=12cm]{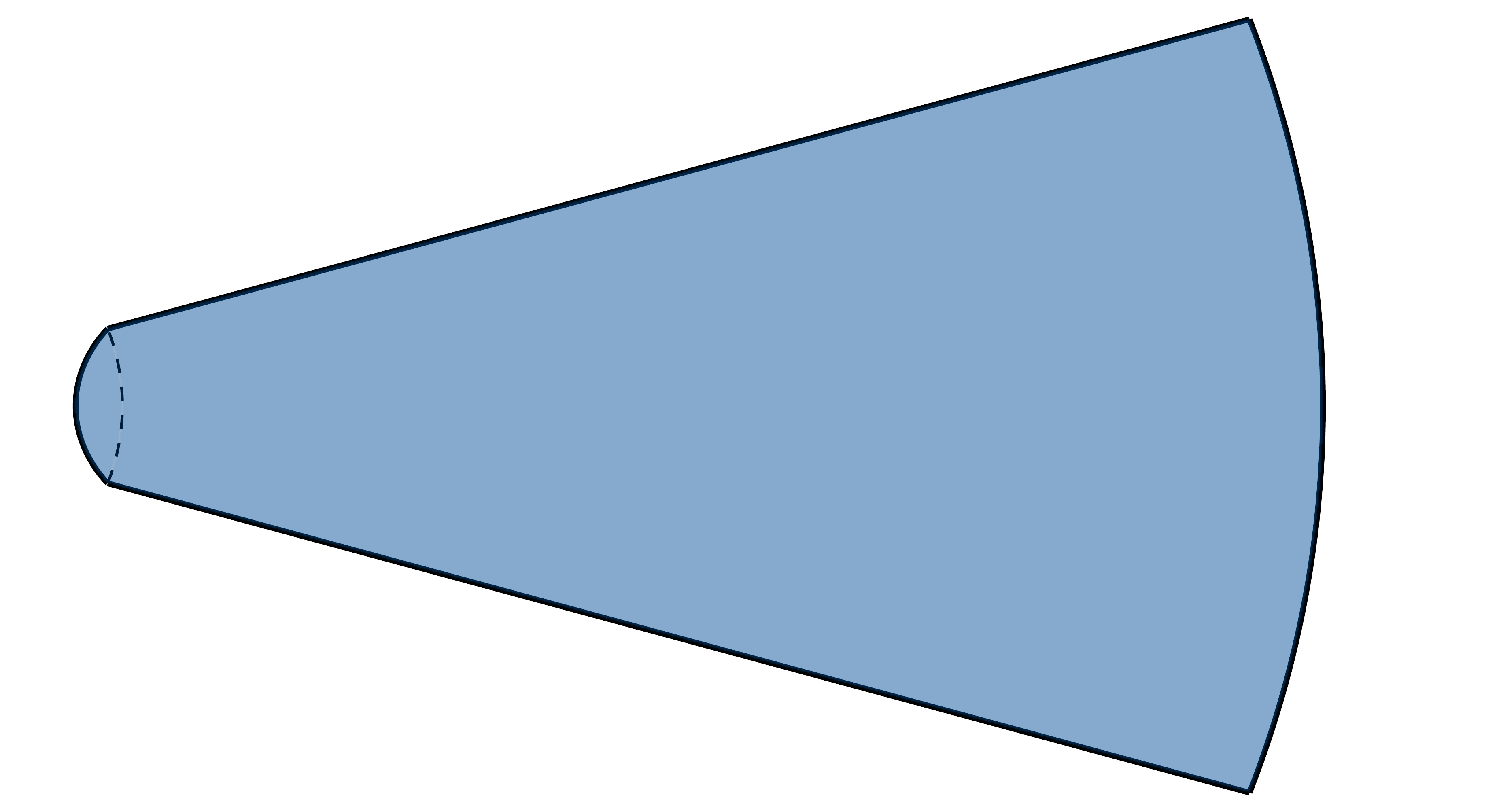}\caption{The fundamental brick $E$ for our examples of nonexistence and nonboundedness.}\label{Figure:E}
\setlength{\unitlength}{2000sp}
\begin{picture}(0,0)
\put(-4500,4200){\makebox(0,0)[lb]{\smash{{\SetFigFont{10}{12.0}{\rmdefault}{\mddefault}{\updefault}$\rho=R_1$}}}}
\put(4500,4200){\makebox(0,0)[lb]{\smash{{\SetFigFont{10}{12.0}{\rmdefault}{\mddefault}{\updefault}$\rho=R_2$}}}}
\put(-5400,4200){\makebox(0,0)[lb]{\smash{{\SetFigFont{10}{12.0}{\rmdefault}{\mddefault}{\updefault}$\Gamma$}}}}
\put(-1000,6300){\makebox(0,0)[lb]{\smash{{\SetFigFont{10}{12.0}{\rmdefault}{\mddefault}{\updefault}$\phi=\phi_0$}}}}
\put(-1000,2100){\makebox(0,0)[lb]{\smash{{\SetFigFont{10}{12.0}{\rmdefault}{\mddefault}{\updefault}$\phi=\phi_0$}}}}
\end{picture}
\end{figure}
Finally, we can define the density $f$ as
\[
f\big( \rho,\, \theta , \, \phi\big) := \left\{ \begin{array}{ll}
N \qquad & \hbox{if $\phi > \phi_0\,, \rho \leq R_1$}\,, \\
M \qquad & \hbox{if $\phi >\phi_0\,, R_1<\rho\leq R_2$}\,, \\
N & \hbox{if $\phi=\phi_0,\, \rho \leq R_2$}\,, \\
N & \hbox{if $\phi<\phi_0,\, \rho \leq \gamma(\phi)$}\,, \\
M & \hbox{if $\phi<\phi_0,\, \gamma(\phi)<\rho\leq R_2$}\,,\\
W & \hbox{if $\rho > R_2$}\,.
\end{array}\right.
\]
This density is non-decreasing and not radial. The set $E$ is clearly a good candidate to be isoperimetric, since it lies in a zone where the density has the huge value $M$, but most of its boundary lies where the density has the value $N\ll M$, namely, all the boundary except the part where $\rho=R_2$. Let us now fix a small number $\eps>0$ and set $\phi_0$, $R_1$ and $R_2$ so that
\begin{align}\label{defRRt}
R_1= \frac{1}{\eps^2}\,, && R_2 = R_1 + \frac{1}{\eps}\,, && R_2 \sin \phi_0 = \eps^{5/3}\,.
\end{align}
A simple evaluation of the perimeter and volume of $E$ gives
\begin{align}\label{evapervol}
\big| E \big|_f \approx M\, \eps^{7/3} \,, &&
P_f\big( E \big) \approx N\, \eps^{2/3} + M\, \eps^{10/3}\,.
\end{align}
\end{definition}
\bigskip

We are now already ready to prove the non-existence result.

\begin{prop}[{\bf Non-existence}\hspace{0pt}]\label{ex:nonex}
For $n\geq 3$, there exists a non-decreasing density on $\R^n$ such that general existence of isoperimetric sets fails. Indeed, $\IF(1)=0$.
\end{prop}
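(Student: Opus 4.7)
The plan is to exploit the fundamental brick of Definition~\ref{funbri}, suitably tuned and stacked, to build a non-decreasing density on $\R^n$ (I work in $\R^3$; the same construction carries over to higher dimensions) for which a sequence $E_k$ of sets with $|E_k|_f=1$ has $P_f(E_k)\to 0$. This gives $\IF(1)=0$ and immediately rules out the existence of an isoperimetric set of unit volume.

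\emph{Parameter tuning for a single brick.} From~(\ref{evapervol}), $|E|_f\approx M\eps^{7/3}$ and $P_f(E)\approx N\eps^{2/3}+M\eps^{10/3}$. Choosing $M=M(\eps)\sim \eps^{-7/3}$ makes $|E|_f$ of order $1$ (a small continuous adjustment of $R_2$ or $\phi_0$ then pins it at exactly $1$), and makes the second perimeter term of order $\eps$. Choosing $N=N(\eps)\sim \eps^{-1/3}$ respects the admissibility constraint $1\ll N\ll M$ and makes the first perimeter term of order $\eps^{1/3}$. Hence a single brick with parameter $\eps$ has volume $1$ and perimeter $O(\eps^{1/3})$, vanishing as $\eps\to 0$.

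\emph{Assembling the global density.} Pick a sequence $\eps_k\to 0$ and corresponding constants $(N_k,M_k,W_k)$. Place the $k$-th brick $E_k$ inside a spherical shell $R_1^{(k)}\le\rho\le R_2^{(k)}$, with the shells pairwise disjoint and receding to infinity, every brick sharing the axis $\phi=0$. Inside shell $k$ define $f$ using Definition~\ref{funbri} with the data $(R_1^{(k)},R_2^{(k)},\phi_0^{(k)},N_k,M_k)$; in the inter-shell region $R_2^{(k)}<\rho<R_1^{(k+1)}$ set $f\equiv W_k$, and set $f\equiv 1$ for $\rho<R_1^{(1)}$. The resulting $f$ is lower-semicontinuous; along each ray the density is non-decreasing within a single shell by the brick construction, and non-decreasing across shells provided
\[
W_k\le N_{k+1}\qquad\text{for every }k.
\]
Since the only requirement on $W_k$ is $W_k\gtrsim M_k\sim\eps_k^{-7/3}$, and $N_{k+1}\sim\eps_{k+1}^{-1/3}$, this condition becomes $\eps_{k+1}\lesssim\eps_k^{7}$, easily arranged by e.g.\ $\eps_k=10^{-7^k}$. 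This yields a globally non-decreasing density $f$ for which $|E_k|_f=1$ and $P_f(E_k)\to 0$, so $\IF(1)=0$.

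\emph{Main obstacle.} The delicate point is the interplay between the two monotonicity requirements: inside a single brick the non-decreasing property is baked into Definition~\ref{funbri}, but patching infinitely many bricks together forces the outer density $W_k$ of the $k$-th shell to be sandwiched between $M_k$ (needed to trap the brick inside) and the inner density $N_{k+1}$ of the next shell (needed for global radial monotonicity). The strong degeneration $\eps_{k+1}\lesssim\eps_k^{7}$ is precisely what reconciles the rigid geometric scaling of~(\ref{evapervol}) with this global sandwich; any merely polynomial decay of $\eps_k$ would fail, because the trapping outer density needed to isolate one brick would overwhelm the permitted inner density of the next.
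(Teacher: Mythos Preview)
Your proof is correct and follows essentially the same approach as the paper: the paper also tunes $N_j\approx\eps_j^{-1/3}$ and $M_j\approx\eps_j^{-7/3}$, sets $W_j=N_{j+1}$, and patches the brick densities on nested shells with $\eps_j$ decreasing rapidly enough that $M_j\ll N_{j+1}$. Your explicit quantification of ``rapidly enough'' as $\eps_{k+1}\lesssim\eps_k^{7}$ is a nice addition; one tiny cosmetic point is that the cap $\Gamma$ makes $E_k$ protrude slightly (by at most $1$) below $\rho=R_1^{(k)}$, so the shells should be taken a hair wider, but this does not affect the argument.
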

\begin{proof}
We work in $\R^3$ for simplicity; the same argument works in all higher dimensions as well. Let us take a sequence $\eps_j\searrow 0$: correspondingly, as described in Definition~\ref{funbri} we take sequences $R_{1,j}$, $R_{2,j}$ and $\phi_{0,j}$ fulfilling~(\ref{defRRt}) and we have the corresponding densities $f_j$ and sets $E_j$ in $\R^3$. Recalling~(\ref{evapervol}) we can choose $N_j$ and $M_j$ of order
\begin{align*}
N_j \approx \eps_j^{-1/3}\,, && M_j \approx \eps_j^{-7/3}\,,
\end{align*}
in such a way that
\begin{align*}
\big| E_j \big|_{f_j} =1 \,, &&
P_{f_j}\big( E_j \big) \approx \eps_j^{3} + \eps_j^{1/3} + \eps_j \to 0\,.
\end{align*}
Let $W_j=N_{j+1}$. Provided $\eps_j$ is chosen sufficiently rapidly decreasing to $0$, we have
\begin{align*}
R_{2,j} \ll R_{1,j+1}\,, && M_j \ll W_j = N_{j+1}\,.
\end{align*}
Finally, we can define a single density $f$ on $\R^3$ just setting
\[
f \equiv f_j \qquad \hbox{on } B\big(R_{2,j}\big) \setminus B\big(R_{2,j-1}\big)\,,
\]
so that $f$ is non-decreasing and diverging. In this way, all the sets $E_j$ are disconnected (and very far from each other), and the density around each $E_j$ coincides with $f_j$. As a consequence, all the sets $E_j$ have unit volume, but their perimeters are going to $0$. In particular, $\IF(1)=0$ and there is no isoperimetric set of volume $1$.
\end{proof}

Our next goal is to show an example of non-boundedness of an isoperimetric set. This cannot be done, of course, with the very same construction as in the preceding example, since in that case there was no isoperimetric set at all! We will present a slight modification of the argument, where the isoperimetric set will intersect all the sets $E_j$. To show its optimality, we will need to use the following Propositions~\ref{super-lemma} and~\ref{super-lemmabis}.
\begin{prop}\label{super-lemma}
Let $\eps$ be a small positive number, let $\phi_0$, $R_1$, $R_2$ be according to~(\ref{defRRt}), and let $W\gg M\gg N\gg 1$ be so that the corresponding fundamental brick $E$ and density $f$ of Definition~\ref{funbri} satisfy $|E|_f\leq 1$. There exists a universal constant $c>0$ such that, for any set $F\subseteq \R^3$ with $|F|\leq 1$, one has
\begin{equation}\label{claim}
P(F) - P(F\cap E) \geq c \big|F\setminus E\big| \,.
\end{equation}
\end{prop}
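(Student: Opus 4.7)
The plan is as follows. Step~1: use the standard boundary-decomposition identity
\[
P_f(F) - P_f(F\cap E) \;=\; P_f\bigl(\partial F\setminus\overline E\bigr) \;-\; P_f\bigl(\partial E\cap F^\circ\bigr)
\]
to reduce the claim, after setting $F_2:=F\setminus E$, $S:=\partial F\setminus\overline E$, and $T:=\partial E\cap F^\circ$, to showing $P_f(S)-P_f(T)\ge c\,|F_2|_f$. Step~2: decompose $F_2$ into its connected components $\{K\}$ and aim at the per-component inequality $P_f(\partial K\setminus\partial E)-P_f(\partial K\cap\partial E)\ge c|K|_f$, whose sum over components gives the claim.

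For each component $K$, the main tool is the Euclidean isoperimetric inequality, exploited through two structural features of the fundamental brick. First, $\R^3\setminus E$ partitions into four constant-density open regions with densities $\delta_K\in\{N,M,W\}$, all $\gg 1$, so the hypothesis $|K|_f\le 1$ forces $|K|_{\rm eucl}\le 1/\delta_K$, hence $|K|_{\rm eucl}^{2/3}\ge\delta_K^{1/3}|K|_{\rm eucl}$; combined with the relative isoperimetric inequality inside the region $R^i$ containing $K$ this gives
\[
P_f(\partial K\setminus\partial E) \;\ge\; c_1\,\delta_K^{1/3}\,|K|_f^{2/3} \;\ge\; c_1\,N^{1/3}\,|K|_f,
\]
which already exceeds $|K|_f$ by a huge factor. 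Second, on every smooth piece of $\partial E$ the density $\delta_E$ appearing in $P_f$ satisfies $\delta_E\le\delta_K$, with the strict gaps $N<M$ along the cone and $M<W$ along the outer cap, and only the equality $\delta_E=\delta_K=N$ along the spherical cap $\Gamma$. In the cone and outer-cap configurations this density gap brings in an extra factor $\delta_E/\delta_K\ll 1$ in the bound for $P_f(\partial K\cap\partial E)$, so the subtraction leaves a positive linear-in-$|K|_f$ surplus.

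The genuinely delicate case, which I expect to be the main obstacle, is a component $K$ entirely inside the small pocket $R^a:=\{\phi<\phi_0,\ \rho<\gamma(\phi)\}$ whose interface $T_K$ lies on $\Gamma$, because there $\delta_E=\delta_K=N$ and one has to win by pure Euclidean geometry. The key ingredients are the smallnesses $\H^2(\Gamma)=O(\eps^{10/3})$ and $|R^a|_{\rm eucl}=O(\eps^{4/3})$, and the fact that $\Gamma$ is a piece of a \emph{unit} sphere, hence of curvature bounded by $1$ uniformly in $\eps$. For $|K|_{\rm eucl}$ above a threshold of order $\H^2(\Gamma)^{3/2}\approx\eps^5$, the relative iso inside $R^a$ gives $\H^2(\partial K\setminus\partial E)\ge c_1|K|_{\rm eucl}^{2/3}\ge 2\H^2(\Gamma)\ge 2\H^2(T_K)$, so after halving and weighting by $N$ one obtains the required linear bound. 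For smaller $|K|_{\rm eucl}$ the set $K$ is confined to a thin layer over part of $\Gamma$, and the argument must use a slab-type estimate exploiting the unit-curvature bound on $\Gamma$ to conclude $\H^2(\partial K\setminus\partial E)-\H^2(\partial K\cap\partial E)\ge c\,|K|_{\rm eucl}$ with a constant independent of $\eps,N,M,W$. Producing this universal constant is where the geometry has to be exploited most carefully; once it is in hand, summing the per-component inequalities completes the proof.
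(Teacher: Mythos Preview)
Your plan has two genuine gaps.

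First, the connected-component decomposition does not do what you want. You assert that each component $K$ of $F\setminus E$ lies in one of the four constant-density open regions of $\R^3\setminus E$, and hence carries a single density $\delta_K$. But those four regions are \emph{not} separated from each other by $\partial E$: for instance, the $N$-region $\{\phi>\phi_0,\ \rho<R_1\}$ and the $M$-region $\{\phi>\phi_0,\ R_1<\rho<R_2\}$ share the spherical surface $S(R_1)\cap\{\phi>\phi_0\}$, which is not part of $\partial E$. A single connected component of $F\setminus E$ can therefore cross from density $N$ to density $M$ (and on to $W$), and your per-component bookkeeping with a single $\delta_K$ collapses. Fixing this forces you to cut $K$ along $S(R_1)$ and $S(R_2)$ and then account for the shared interfaces, which is precisely the decomposition $F^+=F_1\cup F_2\cup F_3$ by spherical shells that the paper uses; the interfaces $J$ and $K$ (in the paper's notation) must then be controlled separately.

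Second, the ``relative isoperimetric inequality inside the region $R^i$'' you invoke does not hold with a universal constant. The regions are very thin (the cone aperture satisfies $R_2\sin\phi_0=\eps^{5/3}$), and in particular the pocket $R^a$ is a long needle of length $\sim 1/\eps^2$ and cross-section $\sim\eps^{10/3}$; a slab $K$ filling a segment of that needle has $\H^2(\partial K\setminus\partial E)\sim\eps^{10/3}$ but $|K|^{2/3}$ arbitrarily larger, so no inequality $\H^2(\partial K\setminus\partial E)\ge c_1|K|^{2/3}$ with $c_1$ independent of $\eps$ is available. Your ``slab-type estimate'' in the below-threshold case is exactly where the difficulty sits, and you have not supplied the mechanism.

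The paper's route avoids both issues. For $F_1=F^+\cap B(R_1)$ (which carries the delicate $\Gamma$-interface) it uses the clean trick of Lemma~\ref{outofball}: apply the Euclidean isoperimetric inequality to $F_1\cup B$, where $B$ is the \emph{unit} ball whose boundary contains $\Gamma$, to obtain directly $\H^2(\partial F_1\setminus\partial E)-\H^2(\partial F_1\cap\partial E)\ge c(1)^{-1}|F_1|_{\rm eucl}$ with the constant depending only on the radius $1$ of $\Gamma$. For $F_2$ and $F_3$ it uses that $E$ is convex, so the nearest-point projection onto $E$ is $1$-Lipschitz and hence $\H^2(\partial F_i^+)\ge\H^2(\partial F_i^-)$; the density gap $M\gg N$ (resp.\ $W\gg M$) then converts this into the required weighted surplus. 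Neither ingredient appears in your outline.
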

The proof will use the following two lemmas, the first of which we state without proof.
\begin{lemma}\label{lipschitz} If $C\subseteq \R^n$ is a convex set, then the projection $\pi: \R^n \setminus C \to C$, which associates to any $x\notin C$ the closest point $\pi(x)\in C$, is $1-$Lipschitz.
\end{lemma}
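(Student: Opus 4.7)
The plan is to use the standard variational characterization of the nearest-point projection onto a closed convex set. I will assume $C$ is closed (otherwise replace $C$ by its closure, which is still convex and yields the same $\pi$; for the application this is harmless). For each $x\notin C$, existence of a closest point $\pi(x)\in C$ follows from lower-semicontinuity of the distance together with the fact that we can minimize over $C\cap \overline{B(x,2d(x,C))}$, a compact set. Uniqueness follows from the parallelogram identity: if $p_1,p_2\in C$ both realized the distance $d:=d(x,C)$, then the midpoint $(p_1+p_2)/2$ would lie in $C$ (convexity) and satisfy $|x-(p_1+p_2)/2|^2 = \tfrac12|x-p_1|^2 + \tfrac12|x-p_2|^2 - \tfrac14|p_1-p_2|^2 = d^2 - \tfrac14|p_1-p_2|^2$, forcing $p_1=p_2$.

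Next I would establish the acute-angle (obtuse-variational) characterization: for $p\in C$, one has $p=\pi(x)$ if and only if
\[
\langle x-p,\, q-p\rangle \leq 0 \qquad \text{for every }q\in C.
\]
The forward direction comes from differentiating $t\mapsto |x-(p+t(q-p))|^2$ at $t=0^+$, which is nonnegative since $p+t(q-p)\in C$ for $t\in[0,1]$ by convexity and $p$ is the minimizer. The reverse direction is an immediate expansion: for any $q\in C$, $|x-q|^2 = |x-p|^2 + |p-q|^2 - 2\langle x-p, q-p\rangle \geq |x-p|^2$.

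Finally, to prove the $1$-Lipschitz bound, take $x,y\in \R^n\setminus C$ and set $p=\pi(x)$, $q=\pi(y)$. Applying the characterization twice gives
\begin{align*}
\langle x-p,\, q-p\rangle &\leq 0\,, \\
\langle y-q,\, p-q\rangle &\leq 0\,.
\end{align*}
Adding these and rearranging yields $\langle (x-y) - (p-q),\, q-p\rangle \leq 0$, i.e.\ $|p-q|^2 \leq \langle y-x,\, q-p\rangle$. By Cauchy--Schwarz the right-hand side is at most $|y-x|\,|q-p|$, so $|\pi(x)-\pi(y)| = |p-q|\leq |x-y|$, as desired.

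There is no real obstacle here; the main point to be careful about is the well-definedness of $\pi$, after which the variational inequality is the only tool needed and the Lipschitz estimate is a two-line computation. The argument clearly extends to $x$ or $y$ lying in $C$ as well (taking $\pi$ to be the identity on $C$), though the statement as given only concerns points outside $C$.
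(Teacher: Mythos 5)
Your proof is correct: it is the standard argument via the variational (obtuse-angle) characterization of the nearest-point projection, and the paper in fact states this lemma without proof, regarding it as classical. Your remark about closedness of $C$ (needed for existence of the closest point) is a sensible precaution and is harmless for the application in Proposition~\ref{super-lemma}.
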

\begin{lemma}\label{outofball}
Let $B(r)$ be a ball of radius $r$ and $S(r)$ its boundary, and let $F\subseteq \R^n\setminus B(r)$ be a set of volume $\VE{F} \leq 1$. Then there exists a constant $c(r)$, depending on $r$ but not on $F$, such that
\[
\VE{F}  \leq c(r) \Big( \H^{n-1}\big( \partial F \setminus S(r)\big) - \H^{n-1}\big( \partial F \cap S(r)\big) \Big)\,.
\]
\end{lemma}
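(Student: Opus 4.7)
The plan is to enlarge $F$ by adjoining $B(r)$ and then exploit the classical Euclidean isoperimetric inequality on the resulting set. Set $G := F \cup B(r)$; since $F \subseteq \R^n \setminus B(r)$, the union is essentially disjoint, so $\VE{G} = \VE{F} + \omega_n r^n$.

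First, I would identify the essential boundary of $G$. Away from $S(r)$ one clearly has $\partial G \equiv \partial F \setminus S(r)$, since $F$ has density zero inside $B(r)$. For $\H^{n-1}$-a.e.\ $p \in S(r)$, the density of $F$ at $p$ is either $0$ (if $p \notin \partial F$) or $1/2$ (if $p \in \partial F$, by De Giorgi's theorem); adding the contribution $1/2$ coming from $B(r)$, the density of $G$ at $p$ is $1/2$ in the first case (so $p \in \partial G$) and $1$ in the second (so $p \notin \partial G$). Thus $\partial G \cap S(r) = S(r) \setminus \partial F$ up to $\H^{n-1}$-null sets, and overall
\[
\PE(G) = \H^{n-1}\big(\partial F \setminus S(r)\big) + \H^{n-1}\big(S(r) \setminus \partial F\big)\,.
\]
Subtracting $\PE(B(r)) = \H^{n-1}(S(r)) = \H^{n-1}(\partial F \cap S(r)) + \H^{n-1}(S(r) \setminus \partial F)$ yields the clean identity
\[
\PE(G) - \PE(B(r)) = \H^{n-1}\big(\partial F \setminus S(r)\big) - \H^{n-1}\big(\partial F \cap S(r)\big)\,,
\]
which is exactly the quantity appearing on the right-hand side of the lemma.

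Next, I would apply the Euclidean isoperimetric inequality $\PE(G) \geq n\omega_n^{1/n}\VE{G}^{(n-1)/n}$ together with $\VE{G} = \omega_n r^n + \VE{F}$ and $\PE(B(r)) = n\omega_n r^{n-1}$ to obtain
\[
\PE(G) - \PE(B(r)) \geq n\omega_n\Big[\big(r^n + \VE{F}/\omega_n\big)^{(n-1)/n} - r^{n-1}\Big]\,.
\]
Since $\VE{F} \leq 1$, the mean value theorem applied to $s\mapsto s^{(n-1)/n}$ on $[r^n,\, r^n + \VE{F}/\omega_n]$ bounds the bracket below by $\tfrac{n-1}{n}(r^n + 1/\omega_n)^{-1/n}\, \VE{F}/\omega_n$, which produces the claim with the explicit constant $c(r) := (r^n + 1/\omega_n)^{1/n}/(n-1)$.

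The main obstacle, as is often the case with finite-perimeter manipulations, is the careful boundary identification at points of $S(r)$: one must exclude a density-$1/4$-type pathology and show that $\partial G \cap S(r)$ really is the $\H^{n-1}$-complement of $\partial F \cap S(r)$ in $S(r)$. Once this is made rigorous, the remainder is a routine application of the sharp Euclidean isoperimetric inequality and a first-order Taylor expansion of $(\,\cdot\,)^{(n-1)/n}$.
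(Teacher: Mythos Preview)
Your proof is correct and follows essentially the same approach as the paper: form $G=F\cup B(r)$, compute $\PE(G)-\PE(B(r))$ in terms of $\H^{n-1}(\partial F\setminus S(r))-\H^{n-1}(\partial F\cap S(r))$, apply the Euclidean isoperimetric inequality, and linearize $(\,\cdot\,)^{(n-1)/n}$ to obtain the identical constant $c(r)=\frac{1}{n-1}\big(r^n+\tfrac{1}{\omega_n}\big)^{1/n}$. The only difference is cosmetic: the paper reaches the perimeter identity via the one-line inclusion--exclusion formula $\PE(G)=\PE(F)+\PE(B(r))-2\,\H^{n-1}(\partial F\cap S(r))$, whereas you derive it by a pointwise density analysis on $S(r)$; both are equivalent.
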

\begin{proof}
Let $F$ be as in the claim, and define $G=F\cup B(r)$. By the isoperimetric inequality,
\[\begin{split}
n\omega_n^{\frac 1n} \Big( \VE{G} \Big)^{\frac{n-1}{n}} 
&\leq \PE \big( G \big)
= \PE ( F ) + \PE \big(B(r)\big) - 2 \H^{n-1} \big( \partial F \cap S(r)\big)\\
&= n\omega_n r^{n-1}
+\H^{n-1}\big( \partial F \setminus S(r) \big)   -  \H^{n-1} \big( \partial F \cap S(r)\big)\,.
\end{split}\]
Hence,
\[\begin{split}
\H^{n-1}\big( \partial F \setminus S(r) \big)   &-  \H^{n-1} \big( \partial F \cap S(r)\big) \geq 
n\omega_n^{\frac 1n} \Big( \VE{G} \Big)^{\frac{n-1}{n}} - n\omega_n r^{n-1}\\
&=n\omega_n^{\frac 1n} \Big(\omega_n r^n+ \VE{F} \Big)^{\frac{n-1}{n}} - n\omega_n r^{n-1}
\geq  (n-1) \bigg(  r^n + \frac 1{\omega_n}\bigg)^{-\frac 1n} \,\VE{F}\,,
\end{split}\]
so that the result follows by taking
\[
c(r) = \frac 1 {n-1}\,\bigg(  r^n + \frac 1{\omega_n}\bigg)^{\frac 1n}\,.
\]
\end{proof}
Having these two lemmas in hand, we are ready to prove Proposition~\ref{super-lemma}.
\begin{figure}[htbp]
\includegraphics[width=8cm]{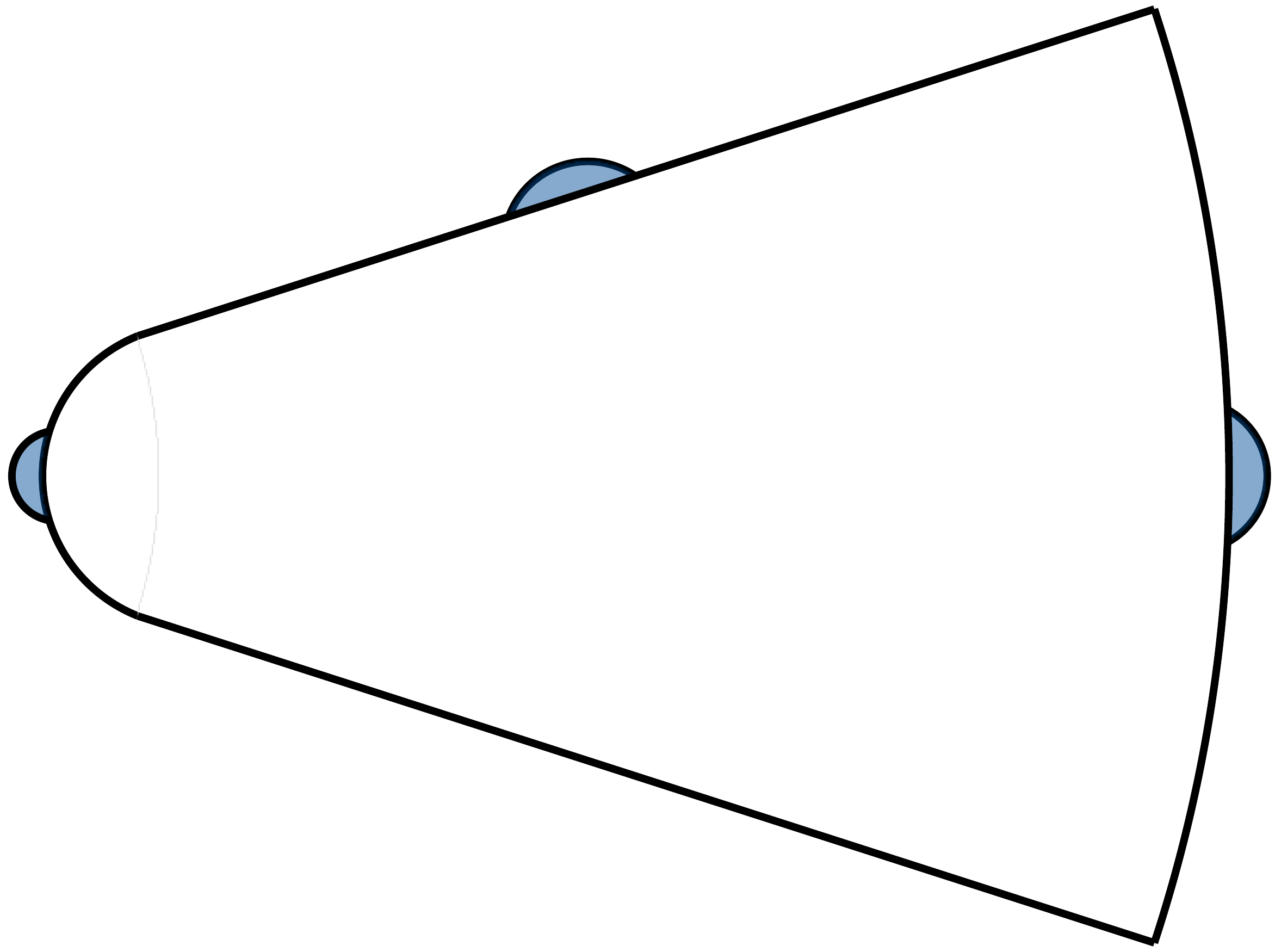}\caption{The sets $F_1$, $F_2$ and $F_3$ for Proposition~\ref{super-lemma}.}\label{Figure:EF}
\setlength{\unitlength}{2000sp}
\begin{picture}(0,0)
\put(-4200,4000){\makebox(0,0)[lb]{\smash{{\SetFigFont{10}{12.0}{\rmdefault}{\mddefault}{\updefault}$F_1$}}}}
\put(-1000,6000){\makebox(0,0)[lb]{\smash{{\SetFigFont{10}{12.0}{\rmdefault}{\mddefault}{\updefault}$F_2$}}}}
\put(3800,4000){\makebox(0,0)[lb]{\smash{{\SetFigFont{10}{12.0}{\rmdefault}{\mddefault}{\updefault}$F_3$}}}}
\end{picture}
\end{figure}
\begin{proof}[Proof of Proposition~\ref{super-lemma}]
Given a set $F\subseteq \R^3$, let $F^+=F\setminus \overline{E}$, and write $F^+=F_1\cup F_2 \cup F_3$, where
\begin{align*}
F_1 = F^+ \cap B(R_1)\,, && F_2 = F^+\cap \Big( B(R_2)\setminus B(R_1)\Big)\,, && F_3 = F^+ \setminus B(R_2)\,.
\end{align*}
For simplicity, we start assuming that the closures of the three sets are disjoint. The situation is depicted in Figure~\ref{Figure:EF}. Defining, for $i=1,\,2,\,3$,
\begin{align*}
\partial F_i^+ = \partial F_i \setminus \partial E\,, &&
\partial F_i^- = \partial F_i \cap \partial E\,,
\end{align*}
one readily observes that
\begin{equation}\label{mainhere}
P(F)- P\big(F\cap E\big) \geq \H^{2}_f \Big( \partial F_1^+ \cup \partial F_2^+ \cup \partial F_3^+ \Big)
-\H^{2}_f \Big( \partial F_1^-+ \cup \partial F_2^- \cup \partial F_3^- \Big)\,.
\end{equation}
Consider now $F_1$: Lemma~\ref{outofball} immediately tells us that
\begin{equation}\label{estF1}\begin{split}
\big|F_1\big| &= N \VE{F_1}  \leq N c \Big( \H^{2}\big( \partial F_1^+\big) - \H^{2}\big( \partial F_1^-\big) \Big)\\
&=  c \Big( \H^{2}_f\big( \partial F_1^+\big) - \H^{2}_f\big( \partial F_1^-\big) \Big)\,.
\end{split}\end{equation}
Let us then study $F_2$. Since $|F_2|\leq 1$, the isoperimetric inequality tells us that
\begin{equation}\label{eqF2eucl}
\PE\big( F_2 \big) \geq c \VE{F_2}^{\frac 23} \,.
\end{equation}
Moreover, since the projection of $\partial F_2^+$ on $E$ contains $\partial F_2^-$, by Lemma~\ref{lipschitz} we have
\[
\H^{2} \big(F_2^+ \big) \geq \H^2\big(F_2^-\big)\,;
\]
hence by~(\ref{eqF2eucl}) we get
\begin{equation}\label{estF2}\begin{split}
\H^2_f\big( \partial F_2^+\big) - \H^2_f\big( \partial F_2^-\big) &=
M \H^2\big( \partial F_2^+\big) - N \H^2\big( \partial F_2^-\big)
\geq (M-N ) \H^2 \big(\partial F_2^+\big)\\
&\geq \frac{M-N}{2}\, \PE\big( F_2 \big)
\geq c\,\frac{M-N}{2}\, \VE{F_2}^{\frac 23}
\geq c\,\frac{M-N}{2 M^{\frac 23}}\, \big| F_2 \big|_f\,.
\end{split}\end{equation}
The very same argument works also for $F_3$; hence we also have
\begin{equation}\label{estF3}
\H^2_f\big( \partial F_3^+\big) - \H^2_f\big( \partial F_3^-\big) \geq c\,\frac{W-M}{2 W^{\frac 23}}\, \big| F_3 \big|_f\,.
\end{equation}
Inserting~(\ref{estF1}), (\ref{estF2}) and~(\ref{estF3}) into~(\ref{mainhere}), and recalling that $1\ll N \ll M\ll W$, we have
\[
P(F) - P(F\cap E) \geq c \big| F^+ \big|\,,
\]
that is, (\ref{claim}).\par\bigskip
We now consider the general case, where $F_1$, $F_2$ and $F_3$ might have common boundary. In this case, of course~(\ref{estF1}), (\ref{estF2}) and~(\ref{estF3}) still hold, but~(\ref{mainhere}) is no longer true and a little more care is needed. Let us define
\begin{align*}
J := \partial F_1^+ \cap \partial F_2^+\,, && K := \partial F_2^+ \cap \partial F_3^+\,,
\end{align*}
so that~(\ref{mainhere}) becomes
\begin{equation}\label{newmainhere}\begin{split}
P(F)&- P\big(F\cap E\big) \geq \\
&\H^{2}_f \Big( \partial F_1^+ \cup \partial F_2^+ \cup \partial F_3^+ \Big)
-\H^{2}_f \Big( \partial F_1^-+ \cup \partial F_2^- \cup \partial F_3^- \Big)   - 2 \H^{2}_f (J) - 2\H^2_f(K)\,.
\end{split}\end{equation}
Observe now that the projection of $\partial F_2^+\setminus J$ on $\partial B(R_1)$ (resp. $E$) contains $J$ (resp. $\partial F_2^-$); hence
\begin{align*}
\H^{2} \big( \partial F_2^+ \setminus J\big) \geq \H^2 (J) \,, &&
\H^{2} \big( \partial F_2^+ \setminus J\big) \geq \H^2 \big(\partial F_2^-\big) \,.
\end{align*}
As a consequence, (\ref{estF2}) can be improved to
\begin{equation}\label{newestF2}\begin{split}
\H^2_f \big(\partial F_2^+\big) - \H^2_f\big( \partial F_2^-\big) - 2 \H^2_f(J)
&= M \H^2 \big(\partial F_2^+\setminus J\big) - N \H^2\big( \partial F_2^-\big) - N \H^2(J)\\
&\geq \big(M-2N\big) \H^2 \big(\partial F_2^+\setminus J\big)
\geq \frac{M-2N}{3} \, \PE \big(F_2\big)\\
&\geq c\, \frac{M-2N}{3M^{\frac 23}} \, \big| F_2 \big|_f\,.
\end{split}\end{equation}
In the very same way, (\ref{estF3}) becomes
\begin{equation}\label{newestF3}
\H^2_f \big(\partial F_3^+\big) - \H^2_f\big( \partial F_3^-\big) - 2 \H^2_f(K)
\geq c\, \frac{W-2M}{3W^{\frac 23}} \, \big| F_2 \big|_f\,.
\end{equation}
Hence, inserting~(\ref{estF1}), (\ref{newestF2}) and~(\ref{newestF3}) into~(\ref{newmainhere}), we get the general validity of~(\ref{claim}). Notice that the constant $c$ coincides with $c(1)$ in Lemma~\ref{outofball}, hence it does not depend on the choice of $R_1$, $R_2$, $\phi_0$, $N$, $M$ or $W$.
\end{proof}
\begin{prop}\label{super-lemmabis}
Consider a set $E$ and a density $f$ as in Definition~\ref{funbri}, and suppose that
\begin{equation}\label{smallperimeter}
P_f(E)\leq \frac {c}{4} \, |E|\,,
\end{equation}
where $c$ is the constant of Proposition~\ref{super-lemma}. There exists a set $G\subseteq E$ such that $|G|\geq |E|/2$ and that, for any other $F\subseteq E$, one has
\begin{equation}\label{belowsegment}
P_f(F) - P_f(G)  \geq  \frac c 2 \, \Big( |F |_f - | G |_f\Big) \,.
\end{equation}
Moreover, the inequality is strict for any $F\subseteq E$ such that $|F|<|E|/2$.
\end{prop}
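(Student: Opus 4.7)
Setting $\psi(F):=P_f(F)-\tfrac{c}{2}|F|_f$ for $F\subseteq E$, the inequality~(\ref{belowsegment}) is equivalent to saying that $G$ minimizes $\psi$ over subsets of $E$. My plan is to take $G$ to be a minimizer of $\psi$ with the largest possible weighted volume $|G|_f$, and then check both $|G|\geq|E|/2$ and the strict inequality directly.

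First I would establish existence of such a maximal-volume minimizer. Since $\psi\geq-\tfrac{c}{2}|E|_f$ on $\{F\subseteq E\}$, any minimizing sequence has bounded $P_f$; because $f$ equals the single value $M$ in the interior of $E$, this also gives bounded Euclidean perimeter, and standard BV compactness yields an $L^1$-convergent subsequence. Lower semicontinuity of $P_f$ and continuity of $|\cdot|_f$ produce a minimizer; among minimizers, the one with the largest $|G|_f$ also exists by the same compactness.

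The main tool is the comparison of $F$ with $E\setminus F$: since $\mathbf{1}_{E\setminus F}=\mathbf{1}_E-\mathbf{1}_F$ for $F\subseteq E$, subadditivity of total variation gives $P_f(E\setminus F)\leq P_f(E)+P_f(F)$, hence
\begin{equation*}
\psi(E\setminus F)\ \leq\ \psi(E)+\psi(F)+c\,|F|_f.
\end{equation*}
To show $|G|\geq|E|/2$, suppose for contradiction that $|G|<|E|/2$. Because $f\equiv M$ in the interior of $E$, also $|E\setminus G|_f>|G|_f$, so maximality of $|G|_f$ among minimizers forces $\psi(E\setminus G)>\psi(G)$ strictly. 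Plugging into the display with $F=G$ yields $\psi(E)+c|G|_f>0$, i.e.\ $|G|_f>-\psi(E)/c=|E|_f/2-P_f(E)/c$. The hypothesis~(\ref{smallperimeter}) bounds $P_f(E)/c\leq|E|/4=|E|_f/(4M)$, so $-\psi(E)/c$ is essentially $|E|_f/2$ up to a term of order $|E|_f/M$, contradicting $|G|_f<|E|_f/2$ (up to this lower-order correction which can be absorbed for $M\gg 1$).

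For the strict inequality: given $F\subseteq E$ with $|F|<|E|/2$, the same estimates give $|F|_f=M|F|<-\psi(E)/c$, so $\psi(E)+c|F|_f<0$; the displayed inequality then yields $\psi(E\setminus F)<\psi(F)$, and combining with $\psi(E\setminus F)\geq\psi(G)$ gives $\psi(F)>\psi(G)$.

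The main obstacle is the delicate constant-matching in the volume bound: the argument succeeds only because $f\equiv M$ in the interior of $E$ with $M\gg 1$, so that $|E|_f=M|E|$ and the term $P_f(E)/c\leq|E|/4$ controlled by~(\ref{smallperimeter}) is negligible compared to $|E|_f/2$; a secondary point is to carry out the strict-inequality argument cleanly in the regime where $|F|$ is just below $|E|/2$, which may require a small amount of extra room in the choice of constants.
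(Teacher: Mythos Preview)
Your complementation estimate is correct but too weak, and the argument does not close. From
\[
\psi(E\setminus G)\le \psi(E)+\psi(G)+c\,|G|_f
\]
together with $\psi(E\setminus G)>\psi(G)$ you only obtain $|G|_f>-\psi(E)/c=\tfrac12|E|_f-P_f(E)/c$. In the paper's convention $|E|$ already denotes the weighted volume $|E|_f$, so the hypothesis~(\ref{smallperimeter}) reads $P_f(E)\le \tfrac c4|E|_f$ and hence $P_f(E)/c$ can be as large as $\tfrac14|E|_f$; you conclude only $|G|_f>\tfrac14|E|_f$, not $|G|_f\ge \tfrac12|E|_f$. Even under your (mis)reading of $|E|$ as Euclidean volume the bound becomes $|G|_f>\tfrac12|E|_f-\tfrac1{4M}|E|_f$, a fixed shortfall that cannot be ``absorbed'': the statement requires the sharp inequality $|G|\ge|E|/2$. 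The same defect breaks your strict-inequality step: for $|F|_f$ just below $\tfrac12|E|_f$ you cannot conclude $|F|_f<-\psi(E)/c$, so $\psi(E\setminus F)<\psi(F)$ need not hold.

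The missing idea is much simpler than complementation. Since $P_f(F)\ge 0$, one has $\psi(F)\ge -\tfrac c2|F|_f$; therefore, for every $F\subseteq E$ with $|F|_f<\tfrac12|E|_f$,
\[
\psi(F)\ \ge\ -\tfrac c2|F|_f\ >\ -\tfrac c4|E|_f\ \ge\ P_f(E)-\tfrac c2|E|_f\ =\ \psi(E)\,,
\]
using~(\ref{smallperimeter}) in the last inequality. This single line shows both that any minimizer of $\psi$ has volume at least $\tfrac12|E|_f$ and that the inequality~(\ref{belowsegment}) is strict whenever $|F|_f<\tfrac12|E|_f$; this is exactly what the paper does (phrased via the relative profile $\IF_E(V)=\min\{P_f(F):F\subseteq E,\ |F|_f=V\}$ and the function $V\mapsto \IF_E(V)-\tfrac c2 V$). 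Your existence/compactness discussion is fine, but replace the complementation step by the trivial lower bound above.
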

\begin{proof}
By lower semicontinuity of the density and by compactness, for any $0< V \leq |E|$ there exists a set $F_V\subseteq E$ minimizing the perimeter among all the subsets of $E$ with volume $V$.\par
Let us then consider the continuous function $\IF_E : \big[0, |E|\big]\to \R^+$ defined by $\IF_E(V) = P\big(F_V\big)$, let
\[
\lambda : = \min \Big\{ \IF_E(V) - \frac c2 \,V:\, 0\leq V \leq |E|\Big\}\,,
\] 
and let
\[
\overline V := \max \Big\{ 0\leq V \leq |E| :\, \IF_E(V) - \frac c2\, V  = \lambda \Big\}\,.
\]
We claim that $G := F_{\overline V}$ is as required.\par
First of all, notice that $\overline V \geq |E|/2$, due to the fact that for any $V < |E|/2$, by~(\ref{smallperimeter}) we get
\begin{equation}\label{>1/2}
\lambda \leq \IF_E \big(|E| \big) - \frac c2\, |E|  = P(E) - \frac c2\, |E| \leq -\frac c4 \, |E| 
\leq \IF_E(V) -\frac c4 \, |E| 
< \IF_E(V) -\frac c2 \, V\,.
\end{equation}
We only have then to check that~(\ref{belowsegment}) holds. To this aim, take any set $F\subseteq E$ and suppose without loss of generality that $F=F_V$ for some $V$. Hence, to show~(\ref{belowsegment}) one just has to notice that
\[
P_f(F) - P_f(G)  \geq  \frac c 2 \, \Big( |F |_f - | G |_f\Big)  \qquad \Longleftrightarrow \qquad
\IF_E(V) - \IF_E(\overline V)  \geq  \frac c 2 \, \big( V - \overline V\big)\,,
\]
which is in turn true by the definition of $\overline V$. To conclude, we observe that inequality~(\ref{belowsegment}) holds strictly whenever $|F|<|E|/2$ thanks to~(\ref{>1/2}).
\end{proof}

We are finally ready to show our example of a non-bounded isoperimetric set. Notice that the Proposition~\ref{nonbdd} below is false for $n=2$ thanks to Proposition~\ref{propisobound2}, and it is also trivially false for $n=1$.

\begin{prop}[{\bf Non-boundedness}\hspace{0pt}]\label{nonbdd}
For each $n\geq 3$, there exists a non-decreasing density on $\R^n$ and a volume $V$ such that there exist isoperimetric sets of volume $V$, but none of them is bounded.
\end{prop}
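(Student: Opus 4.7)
The idea is to repeat the construction of Proposition~\ref{ex:nonex}, but with the parameters of each fundamental brick tuned so that Proposition~\ref{super-lemmabis} can be applied inside every brick. The isoperimetric set will then be the union, over all the bricks, of the ``best portions'' supplied by Proposition~\ref{super-lemmabis}, and will be unbounded by construction.

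Concretely, I would take a rapidly decreasing sequence $\eps_j\searrow 0$ together with parameters $R_{1,j},R_{2,j},\phi_{0,j}$ as in~(\ref{defRRt}) and densities $N_j,M_j,W_j:=N_{j+1}$, arranged so that: (a) the bricks $E_j$ are pairwise disjoint and escape to infinity, $R_{2,j}\ll R_{1,j+1}$; (b) the smallness assumption $P_f(E_j)\leq(c/4)|E_j|_f$ of Proposition~\ref{super-lemmabis} holds, which by~(\ref{evapervol}) reduces to $N_j/(M_j\eps_j^{5/3})$ being small and is easily realized; (c) both series $\sum_j|E_j|_f$ and $\sum_j P_f(E_j)$ converge, with $\sum_j|E_j|_f\leq 1$. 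The global density $f$ is then defined on $\R^n$ by gluing the $f_j$ in concentric shells exactly as in Proposition~\ref{ex:nonex}, and is non-decreasing because $M_j\ll W_j=N_{j+1}$. Proposition~\ref{super-lemmabis} produces, for each $j$, a set $G_j\subseteq E_j$ with $|G_j|_f\geq|E_j|_f/2$ satisfying the segment inequality~(\ref{belowsegment}). Setting $V:=\sum_j|G_j|_f$ and $E^*:=\bigcup_j G_j$ yields an unbounded set of weighted volume $V$ and weighted perimeter $\sum_j P_f(G_j)<\infty$.

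To show that $E^*$ is isoperimetric, I would take any competitor $F\subseteq\R^n$ with $|F|_f=V$ and write $F_j:=F\cap E_j$, $F_0:=F\setminus\bigcup_j E_j$. Summing the segment inequality~(\ref{belowsegment}) over $j$, using $\sum_j|F_j|_f=V-|F_0|_f$, gives
\begin{equation*}
\sum_j P_f(F_j)\geq P_f(E^*)-\frac c2\,|F_0|_f,
\end{equation*}
with strict inequality in the $j$-th term whenever $|F_j|_f<|E_j|_f/2$. The second ingredient is a global analogue of Proposition~\ref{super-lemma},
\begin{equation*}
P_f(F)-\sum_j P_f(F_j)\geq c\,|F_0|_f,
\end{equation*}
to be obtained by partitioning $\R^n$ into concentric annular shells, each containing a single $E_j$, and applying Proposition~\ref{super-lemma} inside each shell; the huge density jump from $M_j$ to $W_j=N_{j+1}$ in the ``gap'' between consecutive shells is what guarantees that the shell-by-shell estimates can be summed without any loss in the constant $c$. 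Adding the two displays yields $P_f(F)\geq P_f(E^*)+(c/2)|F_0|_f\geq P_f(E^*)$, so $E^*$ is optimal.

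Finally, if $F$ is any isoperimetric set of volume $V$, equality in the previous chain forces $|F_0|_f=0$ and equality in Proposition~\ref{super-lemmabis} inside every brick; by the strictness clause of that proposition, this requires $|F_j|_f\geq|E_j|_f/2>0$ for every $j$, so $F$ must intersect each of the infinitely many bricks $E_j$ and is therefore unbounded. The step I expect to be most delicate is the derivation of the global version of Proposition~\ref{super-lemma}: one has to verify that the perimeter of $F$ in each shell dominates the single-brick contribution of Proposition~\ref{super-lemma}, and that the extra perimeter cost incurred by cutting $F$ along the interface spheres between shells is absorbed by the density jump $N_j\to W_j=N_{j+1}$ in the gaps.
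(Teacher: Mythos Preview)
Your overall architecture matches the paper's proof exactly: the same choice of bricks $E_j$ with $\sum_j|E_j|_f\leq 1$, the same use of Proposition~\ref{super-lemmabis} to produce the pieces $G_j$, the same splitting of a competitor $F$ into $F\cap E_j$ and $F_0=F\setminus\bigcup_jE_j$, and the same two inequalities summed to conclude. The step you flag as most delicate is indeed the crux.

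However, the mechanism you propose for that step is not correct. The ``huge density jump from $M_j$ to $W_j=N_{j+1}$'' cannot help at the interface between shells: throughout the entire gap $R_{2,j}<\rho<R_{1,j+1}$ the density equals the \emph{constant} $N_{j+1}$, so cutting $F$ along any sphere there creates new boundary on both sides at the same density, with no jump to exploit. What the paper does instead is a pigeonhole argument: one chooses each cutting radius $\rho_j$ so as to \emph{minimize} $\H^{n-1}\big(F\cap S(\rho)\big)$ over a window of fixed width $16/c$ centered at the midpoint of the $j$-th gap. Minimality then bounds the slice by $(c/8)$ times the weighted volume of $F^{ext}$ in either half of the window, hence $\H^{n-1}_f\big(F^{ext}\cap S(\rho_j)\big)\leq(c/8)\big(|F_j|_f+|F_{j+1}|_f\big)$; summing over $j$, the total cost of the cuts is at most $(c/2)\sum_j|F_j|_f$. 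Combined with Proposition~\ref{super-lemma} in each shell, this gives
\[
P_f(F)-P_f(F\cap E)\;\geq\;\frac{c}{2}\,|F_0|_f\,,
\]
so there \emph{is} a loss in the constant, from $c$ to $c/2$, contrary to what you wrote. This loss is harmless: added to the summed segment inequality it still yields $P_f(F)\geq P_f(E^*)$, and tracing the equality case through the strictness clause of Proposition~\ref{super-lemmabis} still forces $|F\cap E_j|_f\geq|E_j|_f/2$ for every $j$, whence any isoperimetric $F$ meets every brick and is unbounded.
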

\begin{proof}
We treat the case $n=3$; the other cases are the same. We will divide the proof into three steps.
\step{I}{The geometrical setting.}
We will again use the fundamental brick $E$ of Definition~\ref{funbri} and Figure~\ref{Figure:E}. As in Proposition~\ref{ex:nonex}, we take a rapidly decreasing sequence $\eps_j\searrow 0$ and sequences $R_{1,j}$, $R_{2,j}$ and $\phi_{0,j}$ fulfilling~(\ref{defRRt}). This defines correspondingly densities $f_j$ and sets $E_j\subseteq \R^3$, so having~(\ref{evapervol}) in mind we take $N_j$ and $M_j$ of order
\begin{align*}
N_j \approx \frac{\eps_j^{-1/3}}{2^j}\,, && M_j \approx \frac{\eps_j^{-7/3}}{2^j}\,,
\end{align*}
in such a way that
\begin{align}\label{pervol2^j}
\big| E_j \big|_{f_j} =\,\frac{1}{2^j} \,, &&
\frac{P_{f_j}\big( E_j \big)}{\big| E_j \big|_{f_j}} \approx  \eps_j^{3} + \eps_j^{1/3} + \eps_j \xrightarrow[\,j\to\infty\,]{} 0\,.
\end{align}
Defining $W_j=N_{j+1}$, we observe that
\begin{align*}
R_{2,j} \ll R_{1,j+1}\,, && M_j \ll W_j = N_{j+1}\,,
\end{align*}
and we define the density
\[
f \equiv f_j \qquad \hbox{on } B\big(R_{2,j}\big) \setminus B\big(R_{2,j-1}\big)
\]
on $\R^3$, which is nondecreasing and diverging.\par
By~(\ref{pervol2^j}), it is admissible to assume that
\[
P_f(E_j) <  \frac c4\, \big| E_j \big|_f
\]
for all $j$; hence by Proposition~\ref{super-lemmabis} we can also define the sets $G_j$ having volume $V_j\geq |E_j|/2$ in such a way that~(\ref{belowsegment}) holds true. Here, and in the rest of the proof, $c$ is the constant of Proposition~\ref{super-lemma}. Finally, we set $E:=\cup_j E_j$ and $G:= \cup_j G_j$. We will show that $G$, which by construction is not bounded, is an isoperimetric set of volume
\[
V := \sum_j V_j \in \bigg[\, \frac 12 \,, 1 \, \bigg]\,.
\]
\step{II}{Removing the external part.}
Let us take a generic set $F$ of volume $|F| = V$, and call $F^{ext}= F \setminus E$. Our goal in this step is to show that
\begin{equation}\label{weakclaim}
P(F) - P(F\cap E) \geq \frac c2 \, \big| F \setminus E \big|\,,
\end{equation}
that is, (\ref{claim}) still holds with a worse constant. To this aim, for any $j\in \N$ let us consider the region
\[
F \cap B\bigg( \frac{R_{2,j}+R_{1,j+1}}2 + \frac 8c \bigg) \setminus B\bigg( \frac{R_{2,j}+R_{1,j+1}}2  - \frac 8c\bigg)\,,
\]
take
\[
\frac{R_{2,j}+R_{1,j+1}}2 - \frac 8c \leq \rho_j \leq \frac{R_{2,j}+R_{1,j+1}}2 + \frac 8c
\]
to minimize $\H^2\big( F \cap S(\rho)\big)$, and define
\[
F_j = F^{ext} \cap B\big( \rho_ j \big) \setminus B\big( \rho_{j-1}\big)\,.
\]
As in Proposition~\ref{super-lemma}, for any $j$, let
\begin{align*}
\partial F_j^+ = \partial F_j \setminus \partial E_j\,, &&
\partial F_j^- = \partial F_j \cap \partial E_j\,.
\end{align*}
We can observe that
\begin{equation}\label{toinsert}
P(F) - P(F\cap E) \geq \sum_j \H^2_f\big( \partial F_j^+ \big) - \H^2_f \big( \partial F_j^-\big) - 2 \H^2_f\Big( F^{ext} \cap S\big(\rho_j\big)\Big)\,,
\end{equation}
and Proposition~\ref{super-lemma} ensures us that for any $j$
\begin{equation}\label{recall}
\H^2_f(\partial F_j^+) - \H^2_f(\partial F_j^-) \geq c \big| F_j \big|\,.
\end{equation}
Let us now consider more closely the slice $F^{ext} \cap S\big(\rho_j\big)$. If one has that
\[
\rho_j \leq \frac{R_{2,j} + R_{1,j+1}}{2}\,,
\]
then by definition we have
\[
\big| F_{j+1} \big| \geq \frac 8c \, \H^2_f \Big( F^{ext}\cap S\big( \rho_j\big)\Big)\,.
\]
On the other hand, if
\[
\rho_j \geq \frac{R_{2,j} + R_{1,j+1}}{2}\,,
\]
then for the same reason we get
\[
\big| F_j \big| \geq \frac 8c \, \H^2_f \Big( F^{ext}\cap S\big( \rho_j\big)\Big)\,.
\]
In any case, we can then conclude that
\[
\H^2_f \Big( F^{ext}\cap S\big( \rho_j\big)\Big) \leq  \frac c8\, \Big( \big| F_j \big| + \big| F_{j+1} \big|\Big)\,,
\]
and adding up this implies that
\[
2 \sum_j \H^2_f \Big( F^{ext}\cap S\big( \rho_j\big)\Big) \leq  \frac c2 \, \sum_j \big| F_j\big|\,.
\]
Inserting this last inequality into~(\ref{toinsert}) and recalling~(\ref{recall}) for the last time, we get~(\ref{weakclaim}).

\step{III}{Conclusion.}
We are now ready to conclude the proof. Take a general set $F$ with $|F|=V$. By Step~II we know that~(\ref{weakclaim}) holds. Let $F^{int} = F \cap E$, and write $F^{int} = \cup_j F_j$ where
\[
F_j := F^{int} \cap E_j\,.
\]
Thanks to~(\ref{belowsegment}), we know that
\begin{equation}\label{last}
P_f\big(F_j \big) - P_f\big(G_j \big)  \geq  \frac c 2 \, \Big( \big|F_j \big|_f - \big| G_j \big|_f\Big)
\end{equation}
for all $j$. If we add up this inequality for all $j$, we find
\[
P_f\big(F^{int} \big) - P_f \big(G\big)  \geq  \frac c 2 \, \Big( \big|F^{int} \big |_f - \big| G \big|_f\Big)\,,
\]
and adding this to~(\ref{weakclaim}) we get
\[
P_f(F) - P_f \big(G\big)  \geq  \frac c 2 \, \Big( \big|F^{int} \big |_f - \big| G \big|_f + \big| F \setminus E\big|_f\Big)=0 \,.
\]
Since $F$ was arbitrary, we have finally proved that $G$ is an isoperimetric set of volume $V$, and it is unbounded by construction. It remains to be shown that \emph{any} isoperimetric set of volume $V$ is unbounded. Suppose then that $F$ is an isoperimetric set of volume $V$, so that all the above inequalities are equalities. The fact that~(\ref{last}) is an equality implies, by Proposition~\ref{super-lemmabis}, that $|F_j| \geq |E_j|/2$. This tells us that $F$ intersects each $E_j$, and in turn this ensures that $F$ is not bounded.
\end{proof}

We can now show the result about the boundedness of an isoperimetric set in dimension $n\geq 3$. This result is weaker than the two-dimensional result but, thanks to Proposition~\ref{nonbdd}, it is also sharp.
\begin{theorem}[{\bf Boundedness in $\R^n$, I}\hspace{0pt}]\label{thba}
Consider a radial, non-decreasing density $f$ on $\R^n$. Then every isoperimetric set is bounded.
\end{theorem}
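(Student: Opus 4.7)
I will argue by contradiction. Suppose $E$ is isoperimetric with $|E|_f = V$ and unbounded; I will construct a competitor $\widetilde E$ of the same weighted volume and strictly smaller weighted perimeter. For each large $R$, set $\eps=\eps(R):=|E\setminus B(R)|_f$, which is positive by unboundedness and satisfies $\eps(R)\to 0$ as $R\to\infty$. Let $E_R:=E\cap B(R)$ and $S_E(r):=\H^{n-1}(E\cap S(r))$. The plan is to truncate $E$ at radius $R$, gaining some weighted perimeter, and then restore the missing volume $\eps$ by a cheap controlled perturbation.

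For the truncation gain I adapt the slicing argument from the proof of Theorem~\ref{thm2.2}, applied to the tail. Setting $M:=\sup_{r\geq R} S_E(r)$, the radial outward projection of a maximal cross-section $E\cap S(r^*)$ into $\partial E$ expands area and lies at radii $\geq r^*\geq R$ where $f\geq f(R)$, yielding $\int_{\partial E\setminus B(R)} f\,d\H^{n-1}\geq f(R)M$. Simultaneously, the coarea formula along the distance function, Vol'pert's theorem, and the spherical isoperimetric inequality on $S(r)$ (valid since $M/r^{n-1}\to 0$ as $R\to\infty$) give $\int_{\partial E\setminus B(R)} f\,d\H^{n-1}\geq c_n\int_R^\infty f(r)S_E(r)^{(n-2)/(n-1)}\,dr\geq c_n\eps/M^{1/(n-1)}$. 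Eliminating $M$ yields the key estimate
\begin{equation*}
\int_{\partial E\setminus B(R)} f\,d\H^{n-1}\;\geq\;c\,f(R)^{1/n}\,\eps^{(n-1)/n}.
\end{equation*}
A Fubini/Markov argument applied to $\int_R^\infty f(r)S_E(r)\,dr=\eps$ lets me reset $R$ inside $[R,R+1]$ so that $f(R)S_E(R)\leq 2\eps$, which is negligible compared to $f(R)^{1/n}\eps^{(n-1)/n}$ once $\eps$ is small. Hence the truncation gain satisfies $P_f(E)-P_f(E_R)\geq \tfrac{c}{2}f(R)^{1/n}\eps^{(n-1)/n}$.

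To restore the weighted volume $\eps$, I use that the isoperimetry of $E$ forces its generalized curvature to equal a constant $H_0$ along $\partial E$, with $H_0\geq 0$ by the monotonicity of $\IF$ in Theorem~\ref{Lambda-increasing}. By the first variation formulae~(\ref{expansion-volume}) and~(\ref{expansion-perimeter}) applied to a smooth outward normal perturbation of $\partial E\cap B(R)$ of scale $\eps$, the resulting set $\widetilde E$ satisfies $|\widetilde E|_f=V$ and $P_f(\widetilde E)\leq P_f(E_R)+H_0\eps+o(\eps)$. Combining,
\begin{equation*}
P_f(\widetilde E)\;\leq\;P_f(E)-\tfrac{c}{2}f(R)^{1/n}\eps^{(n-1)/n}+H_0\eps+o(\eps).
\end{equation*}
Since $\eps\to 0$ while $f(R)\geq f(0)>0$, the superlinear term $f(R)^{1/n}\eps^{(n-1)/n}$ dominates the linear term $H_0\eps$, giving $P_f(\widetilde E)<\IF(V)$ for $R$ sufficiently large, the desired contradiction. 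The main obstacle is the first part: carefully applying Vol'pert's theorem, coarea, spherical isoperimetry, and the radial projection argument to the tail portion (not merely to the global perimeter), together with the Fubini-type reset of $R$ ensuring the cap $f(R)S_E(R)$ is absorbed. A smaller technical point is avoiding the singular set of $\partial E$ (cf.\ Theorem~\ref{thm1.0}) when constructing the normal perturbation that realizes the $H_0\eps+o(\eps)$ restoration cost.
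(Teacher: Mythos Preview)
Your overall strategy---truncate at a large radius, show the tail perimeter dominates $\eps^{(n-1)/n}$, restore the lost volume at linear cost via first variation, and conclude by comparing exponents---is exactly the paper's. Your derivation of the tail estimate $P_{\text{tail}}(R)\ge c\,f(R)^{1/n}\eps^{(n-1)/n}$ by eliminating the maximal slice area $M$ is a legitimate and slightly more direct alternative to the paper's route, which instead combines $\H^{n-2}_f(\partial E_r)\ge cP(r)^{-1/(n-1)}\H^{n-1}_f(E_r)$ with $-P'(r)\ge\H^{n-2}_f(\partial E_r)$ and integrates the resulting differential inequality to reach $P(r)^{n/(n-1)}\ge cV(r)$. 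Both arrive at the same inequality.

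The genuine gap is in your cap control. Your Fubini/Markov reset yields some $R'\in[R,R+1]$ with $f(R')S_E(R')\le 2\eps(R)$, but once you truncate at $R'$ the relevant tail volume is $\eps(R')$, and nothing prevents $\eps(R')$ from being arbitrarily small compared to $\eps(R)$ (imagine a tail whose weighted volume drops by a factor $e^{-R}$ on each unit interval). The quantity you must dominate is $c\,f(R')^{1/n}\eps(R')^{(n-1)/n}$, and $2\eps(R)$ need not be small relative to that. So the inequality ``cap $\ll f(R)^{1/n}\eps^{(n-1)/n}$'' is not justified by the reset.

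The paper does not try to make the cap small at a well-chosen radius. Instead it reverses the logic: from the competitor inequality $P_f(\widetilde E)\ge P_f(E)$ together with the tail estimate it deduces that the cap must be \emph{large} at every large radius, namely $\H^{n-1}_f(E_r)\ge c'\eps(r)^{(n-1)/n}$. Since $\H^{n-1}_f(E_r)=-\eps'(r)$, this is the ODE $-(\eps^{1/n})'\ge c'/n$, which forces $\eps$ to vanish in finite time, the contradiction. You can patch your argument the same way: if no radius with $f(R)S_E(R)\le\tfrac{c_0}{2}\eps(R)^{(n-1)/n}$ exists, integrate $-\eps'(R)=f(R)S_E(R)\ge\tfrac{c_0}{2}\eps(R)^{(n-1)/n}$ to get the same contradiction. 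But the Fubini step alone does not close the proof.
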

\begin{proof}
Let $E$ be an isoperimetric set and assume that it is not bounded. Let
\begin{align*}
E(r) &:= E \cap B(r) \subsetneq E\,, & E_r&:= E \cap S(r) \,, \\
P(r) &:= \H^{n-1}_f\big( \partial E \setminus B(r)\big)\,, & V(r)& :=\H^n_f\big( E \setminus B(r)\big)\,;
\end{align*}
that is, $E(r)$ is the part of $E$ inside the ball $B(r)$, $E_r$ is the slice of $E$ at distance $r$ from the origin, and $P(r)$ and $V(r)$ are the perimeter and the volume of $E$ outside of the ball $B(r)$. Recall that, with the same assumptions and notation, in Theorem~\ref{Lambda-increasing} we proved that $P\big( E(r)\big) < P (E)$, in equation~(\ref{perimeter-decreases}). Since
\[
P\big(E(r)\big) = P(E) - P(r) + \H^{n-1}_f\big(E_r\big)\,,
\]
the last estimate can be rewritten as
\begin{equation}\label{oldestimate}
P(r) > \H^{n-1}_f\big(E_r\big)\,.
\end{equation}
Considering only radii $r\geq 1$, the standard isoperimetric inequality in the sphere tells us that for any subset $E_r$ of the sphere $S(r)$ having area at most half of the sphere, one has
\[
\H^{n-2}\big( \partial E_r \big)  \geq c \Big( \H^{n-1} \big( E_r \big) \Big)^{\frac{n-2}{n-1}}\,,
\]
where $\partial E_r$ denotes the boundary of $E_r$ inside $S(r)$. And in turn, if $E$ has bounded perimeter, than $\H^{n-1}\big(E_r\big)\leq \frac 12 \H^{n-1}\big(S(r)\big)$ for all $r$ big enough. Recalling that in $S(r)$ the density has the constant value $f(r)$, the last inequality is equivalent to
\[
\H^{n-2}_f\big( \partial E_r \big) \geq c \Big( \H^{n-1}_f \big( E_r \big) \Big)^{\frac{n-2}{n-1}}\, f(r)^{\frac{1}{n-1}}\,,
\]
which in turn by~(\ref{oldestimate}) leads to
\begin{equation}\label{semioldestimate}
\H^{n-2}_f\big( \partial E_r \big) \geq c P( r )^{\frac{-1}{n-1}} \H^{n-1}_f \big( E_r \big)\,,
\end{equation}
where we have also used the fact that $f$ is bounded from below (notice that the positive constant $c$ may decrease from line to line).\par
Now, observe that
\[
-\frac{\partial P(r)}{\partial r} = \bigg| \frac{\partial P(r)}{\partial r} \bigg| \geq \H^{n-2}_f\big( \partial E_r\big)\,,
\]
and that
\[
\H^{n-1}_f(E_r) = - \frac{\partial V}{\partial r} \,(r)\,,
\]
hence~(\ref{semioldestimate}) can be further rewritten as
\[
-\frac{\partial}{\partial r}\Big( P( r )^{\frac{n}{n-1}}\Big) \geq  - c \, \frac{\partial}{\partial r} \Big( V(r)\Big)\,.
\]
Recalling that both $P(r)$ and $V(r)$ converge to $0$ when $r$ goes to $+\infty$, an integration over $r$ yields
\[
P( r )^{\frac{n}{n-1}} \geq  c V(r)\,.
\]
Arguing exactly as in~(\ref{firstvarfirstcomp}), we can pick $R\in \R$ such that $E\cap B(R)\neq \emptyset$ and observe that for $0<\eps<\bar\eps$ it is possible to define a set $E_\eps$ such that
\begin{align*}
E_\eps \setminus B(R)  = E \setminus B(R)\,, &&
\big| E_\eps\big| = \big| E\big| +\eps\,, &&
P\big( E_\eps \big) \leq P(E) + \eps \big(H(E) +1\big)\,.
\end{align*}
If we take then $r>R$ such that $\eps = V(r) < \bar \eps$, and we define $\widetilde E = E_\eps \cap B(r)$, then of course $\big|\widetilde E\big| = |E|$, and moreover
\[
P\big( \widetilde E\big) = P \big( E_\eps\big) - P(r) + \H^{n-1}_f(E_r)
\leq P(E) + \eps\big(H(E) +1\big)- c \,\eps^{\frac{n-1}{n}} + \H^{n-1}_f(E_r)\,.
\]
Since we can have $\eps$ arbitrarily small up to take $r\gg 1$, and since $E$ is an isoperimetric set, we deduce that for all $r\gg 1$ one has
\[
\H^{n-1}_f(E_r) \geq c \,\eps^{\frac{n-1}{n}}\,,
\]
that is,
\[
- \frac{\partial V}{\partial r} \,(r) \geq c\, V(r)^{\frac{n-1}{n}}\,,
\]
or, equivalently,
\[
- \frac{\partial}{\partial r}\Big( V(r)^{\frac 1n}\Big) \geq c\,.
\]
This last estimate gives a contradiction with the assumption $V(r)>0$ for all $r$, and the proof is complete.
\end{proof}

We show now with our last result about the boundedness of isoperimetric sets.
\begin{theorem}[{\bf Boundedness in $\R^n$, II}\hspace{0pt}]\label{lastfrank}
Let $f$ be a ${\rm C}^1$ density on $\R^n$ such that $|D f| \leq  Cf$ for some constant $C>0$. Then every isoperimetric set is bounded.
\end{theorem}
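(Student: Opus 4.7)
The plan is to suppose that $E$ is an isoperimetric set which fails to be bounded and derive a contradiction via a differential inequality for the tail weighted volume. Set
\[
V(r) := \big|E\setminus B(r)\big|_f, \qquad A(r) := \H^{n-1}_f\big(E\cap S(r)\big), \qquad P(r) := \H^{n-1}_f\big(\partial E\setminus B(r)\big),
\]
so that the coarea formula gives $-V'(r) = A(r)$; unboundedness forces $V(r)>0$ for all $r$, and the aim is to show that $V$ must vanish in finite time.

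First I would derive the truncation inequality
\[
P(r) \;\le\; A(r) + K\,V(r) \qquad\text{for all $r$ large,}
\]
where $K$ depends only on the constant generalized curvature $H:=H_f(E)$. By Theorem~\ref{thm1.0}, $\partial E$ contains a smooth piece $\Gamma$ inside some $B(R_0)$. For $r\gg R_0$ the number $V(r)$ is as small as desired; applying Lemma~\ref{lemma-expansion}, I push $\Gamma$ outward along $\nu_E$ by $\eps(r)\sim V(r)/\int_\Gamma f$, selected via the implicit function theorem so that the perturbation $E^{\eps(r)}$ satisfies $|E^{\eps(r)}|_f = |E|_f + V(r)$ exactly. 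Then the truncation $\widetilde E := E^{\eps(r)}\cap B(r)$ has $|\widetilde E|_f = |E|_f$, and (\ref{expansion-perimeter}) together with the minimality of $E$ yields
\[
0 \;\le\; P_f(\widetilde E)-P_f(E) \;=\; H\,V(r)+O(V(r)^2)-P(r)+A(r),
\]
which is the claim with $K=H+1$.

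The harder step is a weighted isoperimetric inequality on the tail,
\[
P(r)+A(r) \;=\; P_f\big(E\setminus B(r)\big) \;\ge\; c\,V(r)^{(n-1)/n}
\]
for $r$ large. Here the hypothesis $|Df|\le Cf$ enters twice. First, since $H_f\equiv H$ on $\partial E$ and $|\nabla v|\le C$ (where $v=\log f$), one has $|H_0|\le|H|+C$ on $\partial E$; adding the contribution $(n-1)/r$ coming from $S(r)$, the whole $\partial(E\setminus B(r))$ has Euclidean mean curvature bounded independently of $r\ge 1$. The monotonicity formula for varifolds with bounded mean curvature then provides universal constants $\rho_0>0$ and $c>0$ such that, for every $x$ on the boundary of the tail and every $\rho\le\rho_0$,
\[
\H^{n-1}\big(\partial(E\setminus B(r))\cap B(x,\rho)\big)\ge c\rho^{n-1}, \qquad \big|(E\setminus B(r))\cap B(x,\rho)\big|_{\rm eucl}\ge c\rho^n.
\]
Second, $|Df|\le Cf$ implies that $f$ varies by at most a factor $e^{2C\rho_0}$ on each such ball, so the local weighted and Euclidean volumes and perimeters are comparable. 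I would cover the tail by a Besicovitch family of $\rho_0$-balls centered on $\partial(E\setminus B(r))$, apply the Euclidean relative isoperimetric inequality on each piece, convert to weighted quantities using the local comparability of $f$, and sum using $\sum_i a_i^{(n-1)/n}\ge\big(\sum_i a_i\big)^{(n-1)/n}$ (which holds for $a_i\ge 0$ since $(n-1)/n<1$) to obtain the claim.

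Combining the two inequalities yields $2A(r)+KV(r)\ge c\,V(r)^{(n-1)/n}$; for $r$ large one has $KV(r)\le \tfrac c2 V(r)^{(n-1)/n}$, whence $-V'(r)=A(r)\ge c'\,V(r)^{(n-1)/n}$. Integrating $\int V^{-(n-1)/n}\,dV\le -c'\,dr$ gives $nV(r)^{1/n}\le nV(r_0)^{1/n}-c'(r-r_0)$, which becomes negative for $r$ large enough---a contradiction with $V(r)>0$. The main obstacle is the second step: for a non-radial density the tail could a priori extend into regions of arbitrarily small $f$, where naive weighted isoperimetric inequalities degenerate. The point of $|Df|\le Cf$ is precisely that at the unit scale $f$ stays controlled, and combined with bounded-mean-curvature monotonicity it forces the boundary of the tail to carry enough local $\H^{n-1}$ content to recover the $(n-1)/n$-power scaling after the covering argument. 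Some additional care must be taken to control possible deep-interior pieces of the tail, corresponding to balls of radius $\ge\rho_0$ lying inside $E\setminus B(r)$: since $|E|_f$ is finite, these are forced to be sparse for $r$ large and can be absorbed into the estimate.
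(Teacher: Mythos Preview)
Your approach is workable but takes a far longer route than the paper's. You correctly isolate the two key facts---that $|Df|\le Cf$ together with the constancy of the generalized curvature $H_f$ forces the Euclidean mean curvature $H_0$ of $\partial E$ to be uniformly bounded, and that monotonicity of the mass ratio then applies---but you embed them inside a tail-volume differential inequality modelled on the proof of Theorem~\ref{thba}. The paper instead uses these two facts \emph{directly} and is done in a few lines: bounded $H_0$ plus monotonicity gives a uniform lower bound $\H^{n-1}\big(\partial E\cap B(x,1)\big)\ge c_0>0$ for every $x\in\partial E$; if $E$ were unbounded one could pick infinitely many disjoint unit balls centred on $\partial E$, forcing the perimeter to be infinite. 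The whole apparatus of $V(r)$, $A(r)$, $P(r)$, the truncation comparison, the Besicovitch covering, the local relative isoperimetric inequalities, and the ODE integration is unnecessary once the monotonicity density bound is in hand.

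Your route has the minor virtue of unifying the argument with the radial case, but two of your steps are softer than they look. First, monotonicity of the mass ratio bounds the \emph{boundary} measure $\H^{n-1}\big(\partial(E\setminus B(r))\cap B(x,\rho)\big)$ from below, not the volume $\big|(E\setminus B(r))\cap B(x,\rho)\big|_{\rm eucl}$; the latter does not follow directly, and in any case you do not need it for the covering argument. Second, the treatment of ``deep-interior'' pieces (portions of the tail at distance $\ge\rho_0$ from its boundary) is only sketched; making this precise requires either a lower bound on $f$ or additional work, whereas the paper's direct argument avoids the issue entirely.
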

\begin{proof}
Let $E$ be an isoperimetric set. Then, the generalized mean curvature $H_f$ is constant on $\partial E$ (see Definition~\ref{X.x}). Recall by~(\ref{X.x'}) that, writing $f=e^v$, one has by definition
\[
H_f(x,E) = H_0 (x,E) + \frac{\partial v}{\partial \nu_E(x)} (x)
\]
for all $x\in\partial E$ such that the normal $\nu_E(x)$ to $\partial E$ at $x$ exists. The assumption $|D f|\leq C f$ ensures that the second term on the right in~(\ref{X.x'}) is bounded. Therefore, the Euclidean mean curvature $H_0$ of $\partial E$ also is bounded. As a consequence, the perimeter of $E$ inside a unit ball centered at points of $\partial E$ is bounded by below by a strictly positive constant: this comes from the so-called \emph{monotonicity of the mass ratio}, see~\cite[Chapt. 9 and 11.2]{M1}.
Since the set has finite perimeter, it readily follows that it is bounded.
\end{proof}

Let us conclude by collecting in a single corollary our three boundedness results, namely, Proposition~\ref{propisobound2}, Theorem~\ref{thba} and Theorem~\ref{lastfrank}.

\begin{corollary}\label{corbdd}
Let $E$ be an isoperimetric set in $\R^n$ with density $f$. Then E is bounded if any of the following three hypotheses hold:
\begin{enumerate}
\item $n=2$ and $f$ is increasing, or
\item $f$ is radial and increasing, or
\item $f$ is ${\rm C}^1$ and $|Df|\leq Cf$.
\end{enumerate}
\end{corollary}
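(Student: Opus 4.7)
The corollary is a pure compilation: each of the three hypotheses in cases (1), (2), (3) is exactly the hypothesis of one of the three boundedness results already established in this section. So the plan is essentially to match each case to the corresponding theorem and invoke it; no new argument is required.

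More precisely, I would proceed as follows. For case (1), with $n=2$ and $f$ increasing, the claim is exactly the conclusion of Proposition~\ref{propisobound2} (under its first alternative hypothesis, that $f$ is non-decreasing in the sense of Definition~\ref{defnondecr}; an increasing density certainly satisfies this). For case (2), the conclusion is precisely Theorem~\ref{thba}, whose hypothesis is that $f$ is radial and non-decreasing. For case (3), the hypothesis $f\in C^1$ with $|Df|\leq Cf$ is exactly the hypothesis of Theorem~\ref{lastfrank}, which gives the boundedness of every isoperimetric set.

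The ``main obstacle'' is only cosmetic: one should observe that ``increasing'' in the statement of the corollary is compatible with the ``non-decreasing'' hypothesis used in the underlying propositions and theorems (they are stated for non-decreasing densities in the sense of Definition~\ref{defnondecr}, which is a weaker assumption). Hence each of the three cases of the corollary falls under the scope of the corresponding previous result, and the proof reduces to three one-line citations. Since no new machinery is needed and no compatibility issue arises, the argument is essentially immediate.
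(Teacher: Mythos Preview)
Your proposal is correct and matches the paper's approach exactly: the corollary is stated without proof as a direct compilation of Proposition~\ref{propisobound2}, Theorem~\ref{thba}, and Theorem~\ref{lastfrank}, and you have correctly identified each correspondence.
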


\section{Geometric properties of isoperimetric sets\label{sect5}}

In this section, we want to discuss some geometric properties of isoperimetric sets, namely, whether or not they are radial, or convex, and whether or not they contain the shortest paths between their points. We point out that the results of Section~\ref{seccon} do not depend on those of the present section.

We start with spherical symmetrication (cf. \cite[Sect. 9.2]{BZ}, \cite[Remark 4]{MHH}).
\begin{definition}
Let $E\subseteq \R^n$. For any $\rho>0$, define $A_E(\rho)$ as the area of the section $E\cap S(\rho)$. Define the \emph{spherical symmetrization of $E$} the set $E^*\subseteq \R^n$such that $A_{E^*} \equiv A_E$, and such that $E^*\cap S(\rho)$ is a spherical cap centered at $(\rho,0, \dots ,0)$.
\end{definition}

The operation of symmetrization has the great advantage of simplifying the set, while maintaining volume and reducing perimeter. The proof of this fact is quite similar to the standard proof for Steiner symmetrization (see~\cite{K}), so we will just give a sketch for it.
\begin{theorem}\label{thmSteiner}
Let $f$ be a radial density on $\R^n$ and let $E$ be a set of finite volume. Then the spherical symmetrization $E^*$ satisfies
\begin{align}\label{Steiner}
\big| E^* \big| = \big| E \big|\,, && P\big( E^*\big) \leq P(E)\,.
\end{align}
Suppose further that $E$ is an open set of finite perimeter, and let $\nu(x)$ denote the normal vector at any $x\in\partial E$. If
\begin{equation}\label{rlp}
\H^{n-1}\bigg( x\in \partial E:\, \nu(x) = \pm \frac x{|x|}\bigg) =0\,,
\end{equation}
and the set
\[
I_E := \Big\{\rho>0 : 0 < \H^{n-1}\big(E\cap S(\rho)\big)<\H^{n-1}\big(S(\rho\big)\Big\}
\]
is an interval, then equality holds in~(\ref{Steiner}) if and only if $E=E^*$ up to rotation about the origin.
\end{theorem}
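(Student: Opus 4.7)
The plan is to follow the standard symmetrization pattern, adapted from Steiner symmetrization (cf.\ Kawohl~\cite{K}): volume preservation is immediate, perimeter reduction is handled by slicing and the isoperimetric inequality on each sphere, and the equality case is extracted by tracking when each inequality is sharp. Volume preservation follows directly from the radiality of $f$ together with $A_E \equiv A_{E^*}$: passing to polar coordinates,
\[
|E|_f = \int_0^\infty f(\rho)\, A_E(\rho)\, d\rho = \int_0^\infty f(\rho)\, A_{E^*}(\rho)\, d\rho = |E^*|_f.
\]

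For the perimeter inequality, the idea is to decompose the surface measure on $\partial E$ into a ``tangential'' part (transverse to the radial direction) and a ``radial'' part (along concentric spheres), and to bound each against the corresponding contribution for $E^*$. First reducing by approximation to a suitably smooth $E$, I would apply the coarea formula to the Lipschitz function $x \mapsto |x|$ restricted to $\partial E$ to obtain
\[
\int_{\partial E} f(|x|)\, \bigl|\nabla^{\partial E}|x|\bigr|\, d\H^{n-1}(x) = \int_0^\infty f(\rho)\, \H^{n-2}\bigl(\partial E \cap S(\rho)\bigr)\, d\rho.
\]
By Vol'pert's theorem, $\partial E \cap S(\rho) = \partial^{S(\rho)}(E \cap S(\rho))$ up to an $\H^{n-2}$-null set for a.e.\ $\rho$, and the intrinsic isoperimetric inequality on $S(\rho)$ gives $\H^{n-2}(\partial^{S(\rho)}(E \cap S(\rho))) \geq \H^{n-2}(\partial^{S(\rho)}(E^* \cap S(\rho)))$, since $E^* \cap S(\rho)$ is a spherical cap of the same area. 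The remaining contribution to $P_f(E)$, corresponding to $|\nabla^{\partial E}|x|| < 1$, arises from pieces of $\partial E$ with large radial-normal component; a direct computation shows this is bounded below by a quantity depending only on $\rho \mapsto A_E(\rho)$ (essentially its weighted total variation). The analogous quantities for $E^*$ saturate these lower bounds, yielding $P_f(E) \geq P_f(E^*)$.

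For the equality case, assume $P(E) = P(E^*)$. Equality in the spherical isoperimetric inequality forces $E \cap S(\rho)$ to be a spherical cap $C_\rho$, with pole $p(\rho) \in S(\rho)$, for a.e.\ $\rho \in I_E$ (outside $I_E$ the slice is trivially a cap, of area $0$ or $\H^{n-1}(S(\rho))$). Hypothesis~(\ref{rlp}) rules out positive-area pieces of $\partial E$ lying on concentric spheres; combined with equality in the coarea step, this forces the pole direction $p(\rho)/\rho \in \S^{n-1}$ to vary continuously on $I_E$. Using that $I_E$ is an interval, I would then deduce that $p(\rho)/\rho$ must be constant on $I_E$, so $E$ is a rotation of $E^*$ about the origin.

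The main obstacle is the perimeter inequality: making the decomposition into tangential and radial contributions precise for a general set of finite perimeter (rather than just a smooth $E$) and extracting the correct lower bound on the radial part in a way that is saturated by $E^*$. The equality analysis is then a delicate bookkeeping exercise, relying crucially on (\ref{rlp}) to pass from almost-everywhere equality in the slice-wise isoperimetric inequalities to the pointwise identification of $E$ with a rotation of $E^*$, and on the connectedness of $I_E$ to propagate a single pole direction across all radii.
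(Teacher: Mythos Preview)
Your overall plan matches the paper: volume preservation is immediate from $A_E\equiv A_{E^*}$, perimeter reduction goes via radial slicing and the spherical isoperimetric inequality, and the equality case is extracted from the equality cases of those inequalities. The paper only sketches the perimeter step, citing~\cite[Lemma~3.3]{FMP}.

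However, the perimeter argument as you describe it has a genuine gap. After coarea you obtain $T(E):=\int_{\partial E}f\,\big|\nabla^{\partial E}|x|\big|=\int_0^\infty f(\rho)\,p_E(\rho)\,d\rho\geq T(E^*)$ (writing $p_E(\rho)=\H^{n-2}\big(\partial(E\cap S(\rho))\big)$), and you then assert that the remainder $R(E)=P_f(E)-T(E)$ is bounded below by a functional of $A_E$ alone which $E^*$ saturates. This is false: if $E\subset\R^2$ is a disjoint union of $N\geq 2$ congruent thin annular sectors and $E^*$ is the single sector with the same $A_E$, a direct computation gives $R(E)<R(E^*)$; the overall inequality $P_f(E)\geq P_f(E^*)$ survives only because the tangential excess $T(E)-T(E^*)$ more than compensates. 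What the paper invokes via~\cite{FMP} avoids the additive split altogether: setting $\nu_r=\nu\cdot x/|x|$ and $s=\nu_r/\sqrt{1-\nu_r^2}$, coarea gives
\[
P_f(E)=\int_0^\infty f(\rho)\int_{\partial(E\cap S(\rho))}\sqrt{1+s^2}\,d\H^{n-2}\,d\rho\,,
\]
and Jensen's inequality applied to the convex map $t\mapsto\sqrt{1+t^2}$ on each slice yields $P_f(E)\geq\int_0^\infty f(\rho)\sqrt{p_E(\rho)^2+Q(\rho)^2}\,d\rho$, where $Q(\rho)=\int_{\partial(E\cap S(\rho))} s\,d\H^{n-2}$ depends only on $A_E$. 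Then $p_E\geq p_{E^*}$ gives $P_f(E)\geq P_f(E^*)$ in one stroke. This Jensen step also drives the equality analysis: equality forces $\nu_r$ to be \emph{constant on each slice} $\partial E\cap S(\rho)$, which combined with the cap structure of the slices and~(\ref{rlp}) makes the pole direction locally constant (your ``continuity'' claim needs exactly this input), after which the connectedness of $I_E$ finishes as you say.
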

\begin{proof}
First of all, notice that the volume of any set $E$ can be expressed as
\[
\big| E \big| = \int_{r=0}^{+\infty} f(r) A_E(r)\, dr\,,
\]
so $|E^*|=|E|$ simply because by definition $A_{E^*}\equiv A_E$.\par
Concerning the inequality for the perimeter, recall that spherical caps are the (unique) isoperimetric sets on spheres. Hence, the inequality $P(E^*)\leq P(E)$ follows by integrating in the radial variable and by using Jensen's inequality, exactly as in~\cite[Lemma~3.3]{FMP}.\par
Let us finally consider the case in which the equality $P(E^*)=P(E)$ holds: by the equality case of the isoperimetric inequality on the sphere, we deduce that each section $E\cap S(r)$ must be a spherical cap centered at some $r\theta(r)$ for $\theta(r)\in \S^{n-1}$. Moreover, by the equality case in Jensen's inequality, one derives also that the radial component $\nu(x)\cdot x/|x|$ of the normal vector $\nu(x)$ is constant on every section $E \cap S(r)$. Together with~(\ref{rlp}) and the hypothesis that $I_E$ is an interval, this implies that the function $r \mapsto\theta(r)$ is constant, so $E = E^*$ up to a  rotation about the origin.
\end{proof}
\begin{remark}{\rm
Observe that something more precise than the claim of Theorem~\ref{thmSteiner} can be said. In fact, as the proof shows, if $E$ is an isoperimetric set then the function $r \mapsto \theta(r)$ which associates to any $\rho$ the ``center'' of the spherical slice $E\cap S(r)$ whenever it is not trivial, is locally constant. Thus, in particular, we derive that an isoperimetric set $E$ without tangential boundary --that is, for which (\ref{rlp}) holds-- must coincide with its spherical symmetrization up to a rotation if both $E$ and $\partial E$ are connected. More generally, we can say that each connected component $E_i$ of $E$ for which also $\partial E_i$ is connected must coincide up to a rotation with the corresponding component of $E^*$ if~(\ref{rlp}) holds. Indeed, if $E_i$ and $E_j$ are two distinct connected components of $E$, then $E_i^*\cap E_j^*= \emptyset$, since otherwise there are radial slices which are not spheres, against the isoperimetric property of $E$.}
\end{remark}

\begin{corollary}
Assume that the density $f$ is smooth and radial, and let $E$ be an isoperimetric set with connected boundary and being equal to the closure of its interior. Then $E=E^*$ up to a rotation about the origin.
\end{corollary}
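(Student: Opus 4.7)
The plan is to apply Theorem~\ref{thmSteiner} on spherical symmetrization directly. Since $f$ is radial, the theorem gives $|E^*|=|E|$ and $P(E^*)\leq P(E)$; because $E$ is isoperimetric we also have $P(E)\leq P(E^*)$, so $P(E)=P(E^*)$. The conclusion $E=E^*$ up to rotation then follows from the equality case of Theorem~\ref{thmSteiner}, provided we verify its two hypotheses: (rlp), namely that the radial-normal set on $\partial E$ has zero $\H^{n-1}$-measure, and that $I_E$ is an interval.

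For (rlp): by Theorem~\ref{thm1.0}, smoothness of $f$ ensures that $\partial E$ is smooth off a singular set of Hausdorff dimension at most $n-8$ (hence of zero $\H^{n-1}$-measure), and on the smooth part $\partial E_{\rm reg}$ the boundary has constant generalized mean curvature, so that writing $f=e^v$ one has $H_0(x) + v'(|x|)\langle x,\nu(x)\rangle/|x| = H_f$. If the radial set $A=\{x\in\partial E_{\rm reg}:\nu(x)=\pm x/|x|\}$ had positive $\H^{n-1}$-measure, then at a density point $x_0\in A$ both $\partial E$ and the sphere $S(|x_0|)$, viewed as graphs over their common tangent plane at $x_0$, are smooth solutions of the same second-order elliptic CMC-with-density equation, and their difference vanishes to infinite order at $x_0$ by the density hypothesis. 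Aronszajn's strong unique continuation principle then forces $\partial E=S(|x_0|)$ locally, and connectedness of $\partial E$ propagates this globally; in that degenerate case $E=B(|x_0|)=E^*$ and there is nothing left to prove. Hence we may assume (rlp).

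For $I_E$ an interval: granted (rlp), $\partial E$ meets almost every sphere transversally. Suppose there were $\rho_1<\rho_0<\rho_2$ with $\rho_1,\rho_2\in I_E$ but $\rho_0\notin I_E$, so either $A_E(\rho_0)=0$ or $A_E(\rho_0)=\H^{n-1}(S(\rho_0))$. Under (rlp), the intersection $\partial E\cap S(\rho_0)$ reduces to tangency points, which form a set of $\H^{n-1}$-measure zero in $\partial E$. Combined with $E=\overline{\mathrm{int}(E)}$ and the existence of nontrivial slices at $\rho_1$ and $\rho_2$, in the first case $E$ splits into an inner and outer piece not touching each other through the interior, giving an analogous splitting of $\partial E$; in the second case the same argument applied to $\R^n\setminus E$ gives the splitting. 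Either way $\partial E$ is disconnected, contradicting the hypothesis.

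The main obstacle is the unique continuation step in the (rlp) verification. One must carefully write the difference between the local graph functions for $\partial E$ and for $S(|x_0|)$ and check that, after linearising the quasilinear CMC-with-density equation along the sphere solution, this difference satisfies a second-order linear elliptic equation with smooth coefficients; the positive $\H^{n-1}$-density of $A$ at $x_0$ then forces infinite-order vanishing at $x_0$, and Aronszajn's theorem closes the argument. Smoothness of $f$ (not mere continuity) is used precisely here, to guarantee sufficient regularity of the coefficients of the linearised operator.
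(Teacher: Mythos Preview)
Your approach is essentially the same as the paper's: both reduce to the equality case of Theorem~\ref{thmSteiner}, disposing of the failure of~(\ref{rlp}) by a unique-continuation/uniqueness argument for the constant-$H_f$ elliptic equation (the paper phrases this simply as ``uniqueness of solutions to elliptic PDEs'' and uses that at a density point of the tangential set the extrinsic curvature of $\partial E$ matches that of the sphere, so the two surfaces share the same constant $H_f$). The only difference is organizational: the paper first asserts in one line that connectedness of $\partial E$ implies connectedness of $E$ and of $I_E$, while you handle the ball case first and then argue $I_E$ is an interval under~(\ref{rlp}); both arguments leave the $I_E$ step at roughly the same level of detail.
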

\begin{proof}
By the classical regularity theorems (see Theorem~\ref{thm1.0}), $\partial E$ is a smooth hypersurface except for a singular set of dimension at most $n-8$. Since $\partial E$ is connected, then so are $E$ and $I_E$. Thus, if~(\ref{rlp}) holds true, then the thesis directly follows from Theorem~\ref{thmSteiner}. It remains to consider the case when~(\ref{rlp}) does not hold, thus $\partial E$ is tangential on a set of positive area. At a smooth point of density of such points, $\partial E$ is tangential and $E$ has the same extrinsic curvature and hence the same generalized curvature as the sphere $S$ about the origin. Since both $\partial E$ and $S$ have the same constant generalized curvature, by uniqueness of solutions to elliptic partial differential equations we obtain $\partial E= S$, hence $E$ is a ball and then $E = E^*$ as desired.
\end{proof}

\bigskip

Next we show under suitable assumptions that isoperimetric sets are mean-convex.

\begin{theorem}[{\bf Mean-convexity}\hspace{0pt}]
Let $f$ be a smooth, radial, log-convex density on $\R^n$, $n\geq 2$. Then every bounded connected isoperimetric set is mean-convex at every regular point (thus, convex if the dimension is $n=2$).
\end{theorem}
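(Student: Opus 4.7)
The plan is to derive mean-convexity from the constancy of the generalized curvature together with log-convexity, via a local surgery argument. By Lemma~\ref{lemma-expansion} applied at an isoperimetric set, the generalized mean curvature $H_f(x,E)=H_0(x,E)+\langle\nabla v,\nu\rangle$ is constant on the regular part of $\partial E$, which by Theorem~\ref{thm1.0} is open and of full $\H^{n-1}$-measure. Radiality gives $\nabla v(x)=v'(|x|)\hat r(x)$; and since a radial convex function on $\R^n$ with $n\geq 2$ is non-decreasing in $|x|$ (otherwise convexity along a line through the origin is violated), we have $v'\geq 0$. Hence on the regular part of $\partial E$,
\[
H_0(x)=H_f-v'(|x|)\,\langle\hat r(x),\nu(x)\rangle.
\]

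I will argue by contradiction. Suppose $H_0(x_0)<0$ at some regular $x_0\in\partial E$. In a small neighborhood $U\subset\partial E$ of $x_0$, the boundary is a smooth strictly concave graph over its tangent hyperplane $\Pi$, sitting on the $E$-side of $\Pi$. Form a competitor $\widetilde E$ through two disjoint surgeries: flatten the concave cap $\partial E\cap U$ to the planar chord $\Pi\cap U$, absorbing the small pocket of $E^c$ into $E$; and compensate the resulting weighted-volume gain by a slight inward push of $\partial E$ on a disjoint regular patch $V$, so that $|\widetilde E|_f=|E|_f$. For $U$ small, continuity of $f$ promotes the strict Euclidean area drop on $U$ to a strict weighted-perimeter drop of order $f(x_0)\kappa^2\ell^{n+1}$, where $\kappa=|H_0(x_0)|/(n-1)$ and $\ell$ is the diameter of $U$; the compensating push costs, to first order by~(\ref{expansion-perimeter}), a weighted perimeter comparable to $|H_f|\,f(x_0)\,\kappa\,\ell^{n+1}$. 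For $\ell$ small enough, the saving (quadratic in $\kappa$) strictly dominates the cost (linear in $\kappa$), producing $P_f(\widetilde E)<P_f(E)$ with $|\widetilde E|_f=|E|_f$, contradicting isoperimetricity of $E$. This proves $H_0\geq 0$ on the regular part of $\partial E$; in $n=2$, a bounded connected planar region with nonnegative inward curvature is convex, as recalled in Section~\ref{succor}.

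Log-convexity is used to control the second-order density corrections generated by both surgeries: $\mathrm{Hess}\,v\geq 0$ prevents the density from dipping inside $U$ in a way that would wipe out the Euclidean saving, and likewise keeps the cost on $V$ bounded. An alternative and perhaps cleaner packaging is the stability inequality
\[
\int_{\partial E}\bigl(|\nabla_{\partial E}u|^2-(|A|^2+\mathrm{Hess}\,v(\nu,\nu))u^2\bigr)f\,d\H^{n-1}\geq 0
\]
for every $u\in C^\infty_c(\partial E^{\mathrm{reg}})$ with $\int u\,f\,d\H^{n-1}=0$. Log-convexity makes the $\mathrm{Hess}\,v$ term nonnegative, reducing the inequality to its density-free shape on which a suitably concentrated test function at $x_0$ produces the pointwise contradiction. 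The main obstacle, in either implementation, is the quantitative bookkeeping that makes the cap saving truly dominate the compensating cost; a secondary technical point is keeping the compensating patch $V$ inside the regular part of $\partial E$, which is ensured by the codimension-$8$ bound of Theorem~\ref{thm1.0}.
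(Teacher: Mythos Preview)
Your surgery argument has a genuine gap in the bookkeeping. Both the ``saving'' from flattening and the ``cost'' of the compensating push are of order $\ell^{n+1}$, so sending $\ell\to 0$ does not make one dominate the other; only the ratio of the constants matters. Your claim that the saving dominates because it is ``quadratic in $\kappa$'' while the cost is ``linear in $\kappa$'' is not an asymptotic statement: $\kappa=|H_0(x_0)|/(n-1)$ is a fixed number, not a small parameter. Working out the constants, the comparison reduces essentially to $\kappa$ versus $|H_f|$, and there is no reason this should go your way. In fact your argument, as written, uses neither radiality nor log-convexity in any essential place (the sentence about $\mathrm{Hess}\,v\ge 0$ controlling ``second-order density corrections'' is vague and does not supply the missing inequality); yet the conclusion is false for general densities, so the argument cannot be correct. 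There is also a separate error for $n\ge 3$: $H_0(x_0)<0$ does not imply that $\partial E$ is a strictly concave graph near $x_0$, so your flattening need not reduce Euclidean area at all. The alternative stability route has the same defect: the inequality $\int(|\nabla u|^2-|A|^2u^2)f\ge 0$ is a global second-variation condition and carries no pointwise information about the sign of $H_0$.

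The paper's proof is entirely different and much shorter. Since $E$ is bounded, pick a point $z\in\partial E$ of greatest distance from the origin; it is a regular point because the tangent cone lies in a halfspace. At $z$ one has $H_0(z)\ge 0$ by elementary geometry, and the outward normal is radial, so $\partial v/\partial\nu(z)=v'(|z|)$. For any other regular point $x$, radiality gives $\partial v/\partial\nu(x)=v'(|x|)\langle\hat r,\nu\rangle\le v'(|x|)\le v'(|z|)$, where the last inequality is exactly log-convexity (so that $v'$ is nondecreasing) together with $|x|\le |z|$. Constancy of $H_f=H_0+\partial v/\partial\nu$ then yields $H_0(x)\ge H_0(z)\ge 0$. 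Note that you already wrote down the key identity $H_0(x)=H_f-v'(|x|)\langle\hat r,\nu\rangle$; the missing idea is simply to evaluate it at the farthest point.
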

\begin{proof}
Let $E$ be an isoperimetric set, and let $z$ be the point of $\partial E$ of greatest distance from the origin. By Theorem~\ref{thm1.0}, H is a smooth at z (since the oriented tangent cone lies in a halfspace, it must be a hyperplane). In particular, the classical mean curvature $H_0$ satisfies
\begin{equation}\label{convex}
H_0(z) \geq 0\,.
\end{equation}
Writing $f(x)=e^{v(|x|)}$, for any regular point $x\in \partial E$ we have
\[
\frac{\partial v}{\partial \nu}(z) = v' \big(|z|\big) \geq v'\big(|x|\big) \geq \frac{\partial v}{\partial \nu}(x)\,.
\]
Recalling that the curvature $H_f= H_0 + \partial v / \partial \nu$ is constant on $\partial E$ because $E$ is isoperimetric, as pointed out in Section~\ref{succor}, we get
\[
H_0(x) \geq H_0(z)\,,
\]
so from~(\ref{convex}) we get that $E$ is mean-convex, as required. In particular, if $n=2$ (where every boundary point is regular), then $E$ is convex.
\end{proof}

Similar mean-convexity holds if the isoperimetric profile $\IF$ is non-decreasing, as shown by the following well-known result.

\begin{theorem}
Let $E$ be an isoperimetric set of volume $|E|=V$ and finite mean curvature in $\R^n$ with density $f$. Then the upper right derivative $\IF'_+$ and the lower left derivative $\IF'_-$ satisfy
\begin{equation}\label{formulaLambda'}
\IF'_+(V) \leq H(E) \leq \IF'_-(V)
\end{equation}
(in particular, if $\IF'(V)$ exists, then $H(E) = \IF'(V)$). As a consequence, whenever $\IF$ is nondecreasing the set $E$ is mean-convex at every regular point.
\end{theorem}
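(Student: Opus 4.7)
The plan is to apply the first variation formulae of Lemma~\ref{lemma-expansion} to a one-parameter family of smooth deformations of $E$, and extract the one-sided bounds on $\IF$ from its defining infimum inequality.

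First I would fix a $C^2$ function $u:\partial E\to\R$ supported in the smooth part of $\partial E$ (which has positive $\H^{n-1}$-measure by Theorem~\ref{thm1.0}) and normalized so that
\begin{equation*}
\int_{\partial E} u(x)\, f(x)\, d\H^{n-1}(x) = 1.
\end{equation*}
Let $E_\eps$ be the corresponding normal deformation. Since $E$ is isoperimetric with finite generalized mean curvature, $H_f(\cdot,E)\equiv H(E)$ on the regular part of $\partial E$, and the formulae~(\ref{expansion-volume})--(\ref{expansion-perimeter}) give, for $\eps$ in a neighbourhood of $0$ (of either sign),
\begin{align*}
\big|E_\eps\big|_f = V + \eps + o(\eps), && P_f(E_\eps) = \IF(V) + \eps\, H(E) + o(\eps).
\end{align*}
The map $\eps\mapsto V_\eps:=|E_\eps|_f$ has derivative $1$ at $\eps=0$, so it is a local homeomorphism near $0$; in particular, every $h$ of small absolute value is realized as $V_\eps-V$ for a unique small $\eps$, with $\eps = h(1+o(1))$.

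Next I would compare against the infimum. The inequality $\IF(V_\eps)\leq P_f(E_\eps)$ reads
\begin{equation*}
\IF(V+h) - \IF(V) \leq h\, H(E) + o(h).
\end{equation*}
For $h>0$, dividing by $h$ and taking the limit superior as $h\to 0^+$ gives $\IF'_+(V)\leq H(E)$. For $h<0$, dividing by $h$ reverses the inequality, and taking the limit inferior as $h\to 0^-$ yields $\IF'_-(V)\geq H(E)$. This establishes~(\ref{formulaLambda'}); if $\IF$ is differentiable at $V$, the two one-sided derivatives coincide with $\IF'(V)$ and the bound collapses to $H(E)=\IF'(V)$.

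Finally, for the mean-convexity corollary: if $\IF$ is non-decreasing, then necessarily $\IF'_+(V)\geq 0$, so from the left-hand inequality of~(\ref{formulaLambda'}) we get $H(E)\geq \IF'_+(V)\geq 0$, which says that $E$ has non-negative generalized mean curvature at every regular point. The only delicate step of the argument is the bookkeeping of the $o(\eps)$ error when converting the variation parameter $\eps$ into the honest volume increment $h=V_\eps-V$, and this is handled once and for all by the observation that $\eps\mapsto V_\eps$ is a local homeomorphism with unit derivative at $0$; everything else is a direct consequence of Lemma~\ref{lemma-expansion} and the definition of $\IF$.
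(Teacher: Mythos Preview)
Your proof is correct and follows essentially the same route as the paper's: choose a normalized variation $u$, apply the first variation formulae~(\ref{expansion-volume}) and~(\ref{expansion-perimeter}) together with the constancy of $H_f$ on $\partial E$, and read off the one-sided derivative bounds from $\IF(V_\eps)\leq P_f(E_\eps)$. If anything, you are slightly more careful than the paper in explicitly reparametrizing from $\eps$ to the actual volume increment $h=V_\eps-V$ via the local homeomorphism $\eps\mapsto V_\eps$; the paper simply writes $\IF(V+\eps)$ and absorbs this step into the $o(\eps)$.
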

\begin{proof}
This is an easy consequence of the first variation formulae~(\ref{expansion-volume}) and~(\ref{expansion-perimeter}). Indeed, choose any continuous function $u:\partial E\to \R$ with compact support such that
\[
\int_{\partial E} u(x) f(x) \, d\H^{n-1}(x) = 1\,.
\]
The $\eps$ expansions $E_\eps$ ($|\eps|\ll 1$) as in Lemma~\ref{lemma-expansion} satisfy
\begin{align*}
\big|E_\eps\big| = |E| + \eps + o(\eps)\,, && P\big( E_\eps\big) = P(E) + \eps H(E) + o(\eps)\,.
\end{align*}
As a consequence, since clearly $\IF(V+\eps)\leq P(E_\eps)$, we can immediately deduce
\[
\IF'_+(V) = \limsup\limits_{\eps\searrow 0}  \frac{\IF(V+\eps) -\IF(V)}{\eps} \leq H(E)\,,
\]
as well as
\[
\IF'_-(V) = \liminf\limits_{\eps\nearrow 0}  \frac{\IF(V+\eps)-\IF(V)}{\eps} \geq H(E)\,.
\]
This establishes~(\ref{formulaLambda'}), and in particular the fact that $H(E) = \IF'(V)$, provided the latter exists. Finally, if $\IF$ is increasing, then $\IF'_+(V)\geq 0$, thus $H(E)\geq 0$.
\end{proof}

\begin{remark}
{\rm Notice that~(\ref{formulaLambda'}) in particular says that $\IF_+'(V)<+\infty$ whenever there is an isoperimetric set of volume $V$ of finite mean curvature. On the other hand, $\IF_+'(V)=+\infty$ in some situations where there is no isoperimetric set of volume $V$. Simple examples of this fact are $\R$ with density $1$ on the unit interval and $2$ outside, or Example~\ref{ex11} in dimension $2$. A more involved example is the one that we gave in Proposition~\ref{steepest}, where $\IF_+'(1)=\infty$ and there are no isoperimetric sets of volume $1$.}
\end{remark}

\bigskip

Let us now consider another question, which is in fact a generalization of convexity to $\R^n$ with density: that a set contain all shortest paths between pairs of points. Throughout the rest of this section, we will work only in $\R^2$ and weight distance by the density. It is important to notice that, in general, both the existence and the uniqueness of shortest paths between two given points may fail. For instance, in Gauss space the existence of shortest paths is not true for all pairs of points, since some shortest paths pass through infinity. Our main
results are Lemma~\ref{lemma5.9} and Proposition~\ref{lemma5.8}, which regard the shape of shortest paths between pairs of points and the question of whether a shortest path between two points inside an isoperimetric set entirely lies inside the set. For the sake of clarity, we start with the following result, which is a particular case of Proposition~\ref{lemma5.8} but whose proof elucidates the idea for the more general result.
\begin{lemma}\label{mild-convexity}
Let $f$ be a smooth or Lipschitz radial density on $\R^2$ such that the isoperimetric profile $\IF$ is non-decreasing, and let $E$ be an open connected isoperimetric set. Moreover, suppose that $x,\, y \in \overline E$ and $y=\lambda x$ with $\lambda>0$. Then the segment $xy$ is contained in $\overline E$.
\end{lemma}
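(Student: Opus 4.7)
The plan is to argue by contradiction. Suppose $xy\not\subseteq\overline E$, and aim to build a competitor of the same $f$-volume as $E$ but with strictly smaller $f$-perimeter. After possibly swapping $x$ and $y$ one may assume $\lambda>1$, so $0<|x|<|y|$, and by radiality of $f$ a rotation reduces to $\hat x=e_1$. The failure of the inclusion, together with the openness of $\R^2\setminus\overline E$, produces two points $q=q_1 e_1$ and $p=p_1 e_1$ in $\partial E$ with $|x|\leq q_1<p_1\leq |y|$ such that the open radial segment $\{t e_1:q_1<t<p_1\}$ is entirely disjoint from $\overline E$. Since $n=2$, Theorem~\ref{thm1.0} guarantees that $\partial E$ is smooth near both $q$ and $p$.

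For any $\rho\in(q_1,p_1)$ the set $E(\rho):=E\cap B(\rho)$ is a proper subset of $E$, since $p\in\overline E\setminus B(\rho)$, and is nonempty since $q\in\overline E\cap B(\rho)$. Moreover, since $\rho e_1\notin\overline E$, the slice $E\cap S(\rho)$ avoids an open arc of $S(\rho)$ about $\rho e_1$. In the spirit of the proof of Theorem~\ref{Lambda-increasing}, I would exploit the strict $1$-Lipschitz property of the radial projection $\alpha\colon\partial E\setminus B(\rho)\to S(\rho)$, $\alpha(z)=\rho z/|z|$ — whose image contains $E\cap S(\rho)$ — together with the radiality of $f$ (so that $f\equiv f(\rho)$ on $S(\rho)$). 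The goal is a strict perimeter saving from cutting $E$ along $S(\rho)$, which would then be recouped by restoring the missing $f$-volume with an almost-optimal set $G$ of $f$-volume $|E|_f-|E(\rho)|_f$, placed disjointly from the bounded set $E(\rho)\subseteq B(\rho)$. Because $\IF$ is non-decreasing the cost of $G$ is controlled by $\IF(|E|_f-|E(\rho)|_f)\leq\IF(|E|_f)=P_f(E)$; if the cutting gain strictly exceeds the cost of $G$, the competitor $\widetilde E:=E(\rho)\cup G$ satisfies $|\widetilde E|_f=|E|_f$ but $P_f(\widetilde E)<P_f(E)$, which is the desired contradiction.

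The hard part is to extract this strict gain under the sole hypothesis that $f$ is radial. In Theorem~\ref{Lambda-increasing} monotonicity of $f$ was used to compare $\int_{\partial E\setminus B(\rho)} f\,d\H^1$ with the $f$-length of the projected image; here monotonicity is not available, and the replacement ingredient has to be the hole itself. Specifically, the definite angular gap on $S(\rho)$ around $\rho e_1$ missed by $\alpha\bigl(\partial E\setminus B(\rho)\bigr)$ must be quantified — for instance, by an explicit lower bound on its length, to be obtained from the geometry of $\partial E$ near $q$ and $p$ and the fact that $\partial E$ is smooth there — and then converted via the non-decreasing hypothesis on $\IF$ into a strict perimeter improvement. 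Turning the qualitative non-inclusion $\rho e_1\notin\overline E$ into this quantitative saving is the technical core of the argument and is where I would concentrate the computation.
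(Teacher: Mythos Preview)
Your plan has a genuine gap, and the paper's argument is both simpler and avoids it entirely. The trouble is twofold. First, cutting along a circle $S(\rho)$ does not exploit radiality: the radial projection $\alpha(z)=\rho z/|z|$ is $1$-Lipschitz, but without monotonicity of $f$ there is no comparison between $f(z)$ and $f(\alpha(z))$, so you cannot bound $\int_{\partial E\setminus B(\rho)} f$ below by $\int_{E\cap S(\rho)} f$. The ``angular gap'' near $\rho e_1$ only makes the \emph{new} boundary $E\cap S(\rho)$ a bit shorter; it says nothing about the \emph{removed} boundary, which is what you need to control. Second, your volume-restoration step is far too expensive: the bound $P_f(G)\approx\IF(|E|_f-|E(\rho)|_f)\le\IF(|E|_f)=P_f(E)$ only gives $P_f(\widetilde E)\le P_f(E(\rho))+P_f(E)$, which is useless. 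To beat $P_f(E)$ you would need the cutting gain to exceed $\IF$ of the lost volume, and nothing in your setup provides that.

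The key observation you are missing is that for a \emph{radial} $f$ the right projection is not onto a circle but onto the ray through $e_1$: the map $\alpha(z)=(|z|,0)$ is $1$-Lipschitz and, crucially, satisfies $f(\alpha(z))=f(z)$ exactly. So take $x,y\in\partial E$ on the positive $e_1$-axis with the open segment $xy$ in $\R^2\setminus\overline E$; write $\partial E=\gamma_1\cup\gamma_2$ as two arcs from $x$ to $y$, with $\gamma_1$ chosen so that the open region $F$ bounded by $\gamma_1\cup xy$ is disjoint from $E$. Since $\alpha(\gamma_1)\supseteq xy$ and $\gamma_1\not\subseteq xy$, one gets $\int_{\gamma_1}f\,d\H^1>\int_{xy}f\,d\H^1$. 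Now set $E^+:=E\cup F$: this has \emph{larger} volume and \emph{strictly smaller} perimeter than $E$, so $\IF(|E^+|)\le P(E^+)<P(E)=\IF(|E|)$ with $|E^+|>|E|$, contradicting that $\IF$ is non-decreasing. No auxiliary set $G$ is needed; the competitor is obtained by \emph{adding} rather than cutting.
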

\begin{figure}[htbp]
\input{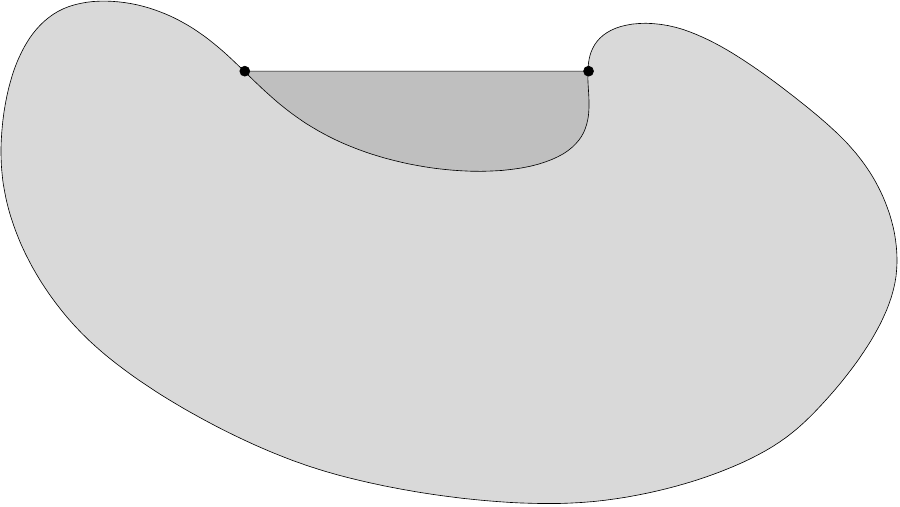tex_t}\caption{The situation in Lemma~\ref{mild-convexity}.}\label{Figure:G}
\end{figure}
\begin{proof}
Assume that the result is not true. Then, we can take two points $x,\, y\in\partial E$ with $y=\lambda x$, $\lambda>0$, and with the open segment $xy$ entirely contained in $\R^2\setminus \overline E$. For simplicity, assume that $x=\big(|x|,0\big)$ and $y=\big(|y|,0\big)$. Let us then consider the curve $\partial E$: it is the union of two curves, $\gamma_1$ and $\gamma_2$, having $x$ and $y$ as endpoints. Let $F$ be the open bounded subset of $\R^2$ whose boundary is $\gamma_1\cup xy$, where $\gamma_1$ is chosen in such a way that $F\cap E = \emptyset$ as in Figure~\ref{Figure:G}. Finally, let $E^+ = E \cup F$, so that
\[
\partial E^+ = \partial E \cup xy \setminus \gamma_1\,.
\]
Let $\alpha: \gamma_1 \to \R^2$ be the function $\alpha(z) = \big( |z|, 0\big)$. Notice that by radiality $f(z) = f(\alpha(z))$ for all $z\in \gamma_1$, and moreover $\alpha$ is $1-$Lipschitz and $\alpha(\gamma_1) \supseteq xy$. By the very same argument as in Theorem~\ref{Lambda-increasing}, we get that
\[
\int_{\gamma_1} f(z)\, d\H^1(z) > \int_{xy} f(z)\, d\H^1(z)\,,
\]
from which it immediately follows that
\begin{align*}
P(F) < P(E)\,, && |F| > |E|\,.
\end{align*}
This is a contradiction of the assumption that $\IF$ is increasing, since we would get
\[
\IF\big(|F|\big) \leq P(F) < P(E) = \IF\big(|E|\big)\,.
\]
\end{proof}

The following consequence is trivial but interesting.

\begin{corollary}
With the same hypotheses as in Lemma~\ref{mild-convexity}, if moreover $O\subseteq \overline{E}$, then $\overline{E}$ is star-shaped in $O$. In other words, $\partial E$ is a polar graph.
\end{corollary}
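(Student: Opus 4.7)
The plan is to prove star-shapedness of $\overline E$ with respect to $O$ by showing that, for every $y\in\overline E$ with $y\neq O$, the whole segment $[O,y]$ lies in $\overline E$. Fix such a $y$ and set $\hat y := y/|y|$. The closed set
\[
T := \bigl\{ r\in[0,|y|]: \, r\hat y \in \overline E\bigr\}
\]
contains $0$ (by the hypothesis $O\in\overline E$) and $|y|$, and Lemma~\ref{mild-convexity} applied along the ray through $\hat y$ tells us that $T\cap(0,|y|]$ is an interval. Hence $T=\{0\}\cup[a,|y|]$ for some $a\in[0,|y|]$, and the corollary reduces to proving that $a=0$ in every case. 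Once star-shapedness is established, the polar-graph description of $\partial E$ is immediate, since each radial ray then intersects $\overline E$ in a closed interval starting at $O$ and meets $\partial E$ precisely at its outer endpoint.

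Suppose by contradiction that $a>0$ for some choice of $y$. Then the open segment $(O, a\hat y)$ is disjoint from $\overline E$, while both of its endpoints lie in $\overline E\setminus E = \partial E$: indeed any neighbourhood of $O$ or of $a\hat y$ contained in $E$ would force points of the forbidden segment into $E$. I would then re-run the argument of Lemma~\ref{mild-convexity}, with $x$ replaced by the origin. Assuming WLOG $\hat y=(1,0)$, take an arc $\gamma_1\subset\partial E$ from $O$ to $a\hat y$ such that $\gamma_1\cup[O,a\hat y]$ bounds an open set $F\subset\R^2$ with $F\cap E=\emptyset$. The radial projection $\alpha(z):=(|z|,0)$ is strictly $1$-Lipschitz outside the $x$-axis, fixes both endpoints, and satisfies $f(z)=f(\alpha(z))$ by radiality; consequently $\alpha(\gamma_1)\supseteq [O,a\hat y]$ and
\[
\int_{\gamma_1} f\,d\H^1 \;>\; \int_{[O,a\hat y]} f\,d\H^1.
\]
Setting $\widetilde E := E\cup F$ we obtain $|\widetilde E|_f>|E|_f$ while
\[
P_f(\widetilde E) = P_f(E) - \int_{\gamma_1} f\,d\H^1 + \int_{[O,a\hat y]} f\,d\H^1 \;<\; P_f(E) = \IF\bigl(|E|_f\bigr),
\]
contradicting the monotonicity of $\IF$ assumed in Lemma~\ref{mild-convexity}.

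The main obstacle is the construction of the arc $\gamma_1$ and the bounded region $F$ when $O$ is a possibly singular point of $\partial E$: in Lemma~\ref{mild-convexity} the analogous construction implicitly uses that $\partial E$ is locally a simple arc near $x$ and $y$. In the present situation, since $E$ has finite perimeter in $\R^2$ and $(O,a\hat y)$ lies in the open set $\R^2\setminus\overline E$, one can take $F$ to be a bounded connected component of $\R^2\setminus\bigl(\overline E\cup[O,a\hat y]\bigr)$ adjacent to the open segment, and set $\gamma_1 := \partial F\setminus[O,a\hat y]\subset\partial E$. Once $\gamma_1$ is realised as a continuous path joining $O$ to $a\hat y$, the inclusion $\alpha(\gamma_1)\supseteq[O,a\hat y]$ is a direct application of the intermediate value theorem to $r\mapsto |\gamma_1(r)|$, and the strictness of the Lipschitz inequality follows because $F$ has positive area, so $\gamma_1$ is not contained in the $x$-axis. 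All remaining ingredients then transcribe verbatim from the proof of Lemma~\ref{mild-convexity}.
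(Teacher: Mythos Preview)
Your proof is correct and follows essentially the same approach as the paper, which gives no explicit argument and simply labels the corollary ``trivial but interesting.'' The intended reading is that the proof of Lemma~\ref{mild-convexity} works verbatim when one endpoint is the origin---nothing in the radial-projection argument $\alpha(z)=(|z|,0)$ requires $x\neq O$---and you carry this out, with the harmless extra step of first invoking the lemma as stated to localize the putative gap at $O$.
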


We can now give the more general result.

\begin{prop}\label{lemma5.8}
Let $f$ be a smooth or Lipschitz density on $\R^2$ such that the isoperimetric function $\IF$ is nondecreasing. Let $E$ be a connected open isoperimetric set, and $\gamma$ be a shortest path connecting two points $x,\, y \in \overline{E}$. Then one has $\gamma\subseteq\overline{E}$ if\\
{\bf (A)} $\IF$ is \emph{strictly} increasing, or \\
{\bf (B)} $\gamma$ is the \emph{unique} shortest path connecting $x$ and $y$.
\end{prop}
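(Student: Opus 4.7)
The plan is to adapt the argument of Lemma~\ref{mild-convexity}, with the radial segment $xy$ replaced by the generic shortest arc $\gamma$. I will suppose for contradiction that $\gamma\not\subseteq\overline E$. Parametrising $\gamma:[0,1]\to\R^2$, I pick a maximal open interval $(s,t)\subseteq (0,1)$ on which $\gamma$ lies in $\R^2\setminus\overline E$, and set $x':=\gamma(s)$, $y':=\gamma(t)$, $\gamma':=\gamma|_{[s,t]}$, so that $x',y'\in\partial E$ while the relative interior of $\gamma'$ sits in the open complement of $\overline E$. Using the Jordan curve theorem, I will produce a sub-arc $\gamma_1\subseteq\partial E$ from $x'$ to $y'$ such that $\gamma'\cup\gamma_1$ bounds a nonempty open region $F$ disjoint from $E$, exactly as in the picture of Lemma~\ref{mild-convexity}. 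Setting $E^+:=E\cup F$ then gives $|E^+|_f>|E|_f$ and
\[
P_f(E^+)=P_f(E)+\ell_f(\gamma')-\ell_f(\gamma_1),
\]
where $\ell_f$ denotes weighted length.

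The crucial observation is that $\gamma'$ is itself a shortest path between $x'$ and $y'$: any strictly shorter path $\sigma$ from $x'$ to $y'$, substituted for $\gamma'$ inside $\gamma$, would give a path from $x$ to $y$ strictly shorter than $\gamma$. Hence $\ell_f(\gamma')\leq\ell_f(\gamma_1)$, and therefore $P_f(E^+)\leq P_f(E)=\IF(|E|_f)$. In case (A), since $\IF$ is strictly increasing and $|E^+|_f>|E|_f$, one obtains
\[
P_f(E)=\IF(|E|_f)<\IF(|E^+|_f)\leq P_f(E^+)\leq P_f(E),
\]
a contradiction. In case (B), suppose first that $\ell_f(\gamma_1)=\ell_f(\gamma')$; then $\gamma_1$ is itself a shortest path between $x'$ and $y'$, and splicing $\gamma_1$ into $\gamma$ in place of $\gamma'$ produces a second shortest path $\tilde\gamma$ between $x$ and $y$ distinct from $\gamma$ (since $\gamma_1\subseteq\partial E$ whereas the interior of $\gamma'$ lies in $(\overline E)^c$), contradicting the assumed uniqueness. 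Hence $\ell_f(\gamma_1)>\ell_f(\gamma')$, which yields $P_f(E^+)<P_f(E)=\IF(|E|_f)\leq\IF(|E^+|_f)\leq P_f(E^+)$, again a contradiction.

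The main obstacle I foresee is the planar-topology bookkeeping that produces $\gamma_1$ and $F$ with the stated properties, together with checking that the perimeter identity $P_f(E^+)=P_f(E)+\ell_f(\gamma')-\ell_f(\gamma_1)$ is valid in the sense of sets of finite perimeter. In $\R^2$ Theorem~\ref{thm1.0} makes $\partial E$ smooth (the singular set has dimension at most $n-8=-6$, hence is empty), so that $\partial E$ is locally a disjoint union of smooth curves; the endpoints $x',y'$ of $\gamma'$ lie on a single connected component, and choosing $\gamma_1$ to be one of the two arcs of that component joining $x'$ to $y'$ and letting $F$ be the region enclosed on the far side of $\gamma'$ from $E$ gives the required domain. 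Once those details are handled, the rest of the argument is the short syllogism above.
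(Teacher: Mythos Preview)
Your proposal is correct and follows essentially the same approach as the paper's proof: replace a boundary arc $\gamma_1\subseteq\partial E$ by the geodesic arc to obtain a strictly larger set $E^+$ with no greater perimeter, then derive a contradiction from either strict monotonicity of $\IF$ or uniqueness of the geodesic. Your treatment is in fact a bit more careful than the paper's in case~(B): the paper reduces directly to $x,y\in\partial E$ with $\gamma$ entirely outside $E$ and then asserts $\ell(\tilde\gamma)>\ell(\gamma)$ by uniqueness, whereas your splicing argument (replacing $\gamma'$ by $\gamma_1$ inside the full path from the original $x$ to $y$) cleanly preserves the uniqueness hypothesis without needing to check that it survives the reduction to the sub-arc.
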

\begin{proof}
If the assertion were not true, we could take two points $x,\, y \in \partial E$ and a shortest path $\gamma$ between $x$ and $y$, with the property that $\gamma$ lies entirely outside $E$. Then, as in the proof of Lemma~\ref{mild-convexity}, notice that $\partial E$ is a closed curve, and it is the union of two curves connecting $x$ and $y$. Let $\tilde\gamma$ be one of these two curves such  that $\gamma \cup \tilde\gamma$ is the boundary of a set $F$ with $E\cap F=\emptyset$. Letting $E^+=E\cup F$, one has that $|E^+| > |E|$, while
\[
\IF\big( |E^+|\big) \leq P(E^+) = P(E) + \ell(\gamma) - \ell(\tilde\gamma)\leq P(E) = \IF\big(|E|\big)\,,
\]
where
\[
\ell(\tau) := \int_0^1 f\big(\tau(s)\big) \big| \tau'(s)\big| \, dx
\]
denotes the length of any Lipschitz curve $\tau: (0,1)\to \R^2$\,. The contradiction is then found either because $\IF$ is strictly increasing (in case {\bf (A)}), or because $\ell (\tilde\gamma) > \ell(\gamma)$ (in case {\bf (B)}).
\end{proof}

We conclude with a simple general result about the shape of shortest paths with a radial non-decreasing density (notice that if the density is radial and non-decreasing then there always exists at least one shortest path between any two given points).

\begin{lemma}\label{lemma5.9}
Let $f$ be a smooth or Lipschitz radial non-decreasing density on $\R^2$, let $P\neq Q$ be two points in $\R^2$, and let $\gamma$ be a shortest path between $P$ and $Q$. Then $\gamma$ lies entirely in the closed Euclidean triangle $\overline{OPQ}$.
\end{lemma}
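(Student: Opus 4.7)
The plan is to use the Euclidean closest-point projection $\pi\colon \R^2\to \overline{OPQ}$, which is well defined and $1$-Lipschitz by Lemma~\ref{lipschitz} since $\overline{OPQ}$ is closed and convex. The strategy is to show that, if $\gamma$ left $\overline{OPQ}$ on a sub-arc of positive length, the projected curve $\pi\circ\gamma$ would still connect $P$ to $Q$ inside the triangle and would have \emph{strictly} smaller $f$-length than $\gamma$, contradicting the minimality of $\gamma$.

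The first step is to prove the pointwise bound $|\pi(z)|\leq |z|$ for every $z\in\R^2$, which is trivial if $z\in\overline{OPQ}$. For $z\notin\overline{OPQ}$ I partition the complement of the triangle according to which face of $\partial\overline{OPQ}$ contains $\pi(z)$. If $\pi(z)$ lies in the relative interior of $OP$ or $OQ$, then $\pi(z)$ is the orthogonal projection of $z$ onto a line through the origin, so $|z|^2=|\pi(z)|^2+t^2$ with $t>0$ the perpendicular distance from $z$ to that line. If $\pi(z)$ lies on the relative interior of $PQ$, letting $F$ be the foot of the perpendicular from $O$ to the line through $PQ$, $d=|OF|>0$ and $t>0$ the distance from $z$ to that line, a direct computation in coordinates along and perpendicular to $PQ$ gives $|z|^2-|\pi(z)|^2=2dt+t^2>0$. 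If $\pi(z)=O$ the bound is immediate. If $\pi(z)=P$ (the case $Q$ is analogous), then $z=P+v$ with $v=\alpha\,n_{OP}+\beta\,n_{PQ}$ for $\alpha,\beta\geq 0$, where $n_{OP}$ is the outer normal to $OP$ at $P$ (perpendicular to the radius, so $\langle P,n_{OP}\rangle=0$) and $n_{PQ}$ is the outer normal to $PQ$ at $P$; the latter points in the direction $F/|F|$, so $\langle P,n_{PQ}\rangle=|F|=d\geq 0$, and $|P+v|^2=|P|^2+2\langle P,v\rangle+|v|^2\geq |P|^2$.

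Parameterize $\gamma$ by Euclidean arclength. Combining the pointwise step with $f$ being radial and non-decreasing, and with the $1$-Lipschitz property of $\pi$, we get $f(\pi(\gamma(t)))\leq f(\gamma(t))$ and $|(\pi\circ\gamma)'(t)|\leq |\gamma'(t)|$ for a.e.\ $t$, so that $\ell(\pi\circ\gamma)\leq \ell(\gamma)$. To upgrade this to a strict inequality when $\gamma$ exits the triangle, consider a maximal sub-arc of $\gamma$ strictly outside $\overline{OPQ}$ and split it into pieces each contained in a single exterior region. On a piece lying in a vertex wedge, $\pi$ is constant so $(\pi\circ\gamma)'\equiv 0$ while $|\gamma'|\equiv 1$, and the strict inequality is immediate on a set of positive measure. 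On a piece lying in an edge strip, let $h(t)$ be the perpendicular distance from $\gamma(t)$ to the edge line: $h$ is Lipschitz, hence absolutely continuous; it vanishes at the endpoints of the sub-arc (where $\gamma$ returns to $\partial\overline{OPQ}$) and is positive inside; so $h$ is non-constant and $h'\neq 0$ on a set of positive measure. Since $|(\pi\circ\gamma)'(t)|^2=|\gamma'(t)|^2-h'(t)^2$ inside the strip, we get $|(\pi\circ\gamma)'(t)|<|\gamma'(t)|$ on that set, and combining with $f>0$ yields $\ell(\pi\circ\gamma)<\ell(\gamma)$, contradicting the minimality of $\gamma$.

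The main technical point is the verification of $|\pi(z)|\leq |z|$ at the vertex wedges over $P$ and $Q$: it rests on the geometric fact that the outer normal to the chord $PQ$ at $P$ (resp.\ $Q$) points away from $O$, which forces the mixed term $\langle P,n_{PQ}\rangle$ (resp.\ $\langle Q,n_{PQ}\rangle$) to be non-negative rather than negative; the other cases and the $1$-Lipschitz argument reduce to routine calculus.
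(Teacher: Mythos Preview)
Your argument is correct and takes a genuinely different route from the paper. The paper proceeds in three steps: first it projects $\gamma$ onto the closed half-plane containing $O$ bounded by the line $PQ$ (so $\gamma$ cannot cross $PQ$ to the far side); then it applies this conclusion to sub-arcs of $\gamma$ joining two points on $OP$ (resp.\ $OQ$) to rule out crossings of those radii; finally a short topological argument pins $\gamma$ inside the triangle. Each projection is onto a half-plane, where $|\pi(z)|\le|z|$ is immediate, and the cleverness lies in the inductive use of Step~I on sub-paths. You instead project once onto the convex set $\overline{OPQ}$ via Lemma~\ref{lipschitz}, at the cost of a case-by-case verification that the closest-point map onto a triangle with vertex at the origin is norm-nonincreasing. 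Your approach is more direct but requires the vertex-wedge computation, which is the genuinely new ingredient; the paper avoids this by never projecting onto a set with corners.

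Two minor points of presentation. First, your writing $d=|OF|>0$ tacitly excludes the collinear case $O\in PQ$; your formulas remain valid with $d\ge 0$, but it would be cleaner to dispose of the degenerate case at the outset (as the paper does), since then $\overline{OPQ}$ is a segment through the origin and the projection argument is trivial. Second, in the strict-inequality step you split the maximal exterior sub-arc into pieces by region and then, for an edge-strip piece, invoke that $h$ vanishes ``at the endpoints of the sub-arc''. This is only literally true when the whole sub-arc is a single edge-strip piece; you should make explicit the dichotomy that either some piece lies in a vertex wedge (and you are done), or else the sub-arc is confined to a single edge strip (since adjacent strips are separated by vertex wedges of positive angular width in the nondegenerate case), whence the endpoints of the sub-arc lie on that edge and $h$ vanishes there. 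With these clarifications the proof is complete.
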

\begin{proof}
Let $H\subseteq \R^2$ be an open half-space which does not contain the origin $O$, and whose boundary contains both $P$ and $Q$. Of course, there are two half-spaces whose boundary contains $P$ and $Q$, and exactly one of them does not contain the origin, unless the line $PQ$ contains $O$, in which case both the half-spaces have this property. We divide the proof into three steps. 
\step{I}{$\gamma$ does not intersect $H$.}
Let $\alpha:\R^2\to \R^2\setminus H$ be the function given by $\alpha(x)=x$ if $x\in \R^2\setminus H$, while otherwise $\alpha(x)$ is the orthogonal projection of $x$ on $\partial H$. The function $\alpha$ is $1-$Lipschitz, and since $f$ is radial and increasing one has $f\big(\alpha(x)\big)\leq f(x)$ for all $x\in \R^2$. The path $\alpha\circ\gamma$ between $P$ and $Q$ then satisfies
\begin{equation}\label{circ<}
\ell\big(\alpha\circ \gamma \big) = \int f\big(\alpha\circ\gamma(t)\big) \big|(\alpha\circ\gamma)'(t)\big|\, dt
\leq \int f\big(\gamma(t)\big) \big| \gamma'(t)\big| = \ell\big(\gamma\big)\,.
\end{equation}
This implies that $\alpha\circ\gamma$  also is a shortest path, and moreover a quick look at the equality cases above ensures that the inequality~(\ref{circ<}) is strict unless $\alpha\circ\gamma=\gamma$, which means that $\gamma$ cannot intersect the half-space $H$. The first step is complete. In particular, if $P$ is a positive multiple of $Q$, that is, the line $PQ$ contains $O$, then the segment $PQ$ is the \emph{unique} shortest path connecting $P$ and $Q$.

\step{II}{$\gamma$ does not cross the segments $OP$ and $OQ$.}
We want now to show that $\gamma$ cannot cross the segment $OP$ (the same argument, of course, will apply also to $OQ$). This means that, if the image of $\gamma$ contains two points $P_1=\gamma(t_1)$ and $P_2=\gamma(t_2)$ in the segment $OP$, then $\gamma\big([t_1,t_2]\big)$ is the segment $P_1P_2$. This is an immediate consequence of Step~I, since the restriction of $\gamma$ to $[t_1,t_2]$ is a shortest path connecting $P_1$ and $P_2$, one of which is a multiple of the other by definition.

\step{III}{Conclusion.}
We are now ready to conclude the proof. We may assume that neither $P$ nor $Q$ is a multiple of the other, since otherwise the lemma already has been established. By Steps~I and~II, $\gamma$ cannot intersect the half-space $H$, nor cross the segments $OP$ and $OQ$. There are then only two possibilities, namely, either $\gamma$ is entirely contained in the closed triangle $\overline{OPQ}$, or $\gamma$ has an empty intersection with the open triangle $OPQ$. In the first case, we have the result, so let us assume that the second case holds. If $O$ belongs to the path $\gamma$ then we are done, because by Step~I we deduce that $\gamma$ is the union of the segments $PO$ and $OQ$, which violates regularity. On the other hand, if $O$ does not belong to the path $\gamma$, then by construction there must be a point $P' = \gamma(t)$ on $\gamma$ such that $P' = \lambda P$ for some $\lambda <0$. But this would imply, again by Step~I, that the first part of $\gamma$ is a segment between $P$ and $P'$, which is a contradiction of the assumption that $O$ does not belong to the image of $\gamma$. The proof is complete.
\end{proof}

\section{The main existence results\label{seccon}}

In this last section we study existence for increasing densities on $\R^n$. As we discussed in Section~\ref{sect2}, one may expect that existence holds true for non-decreasing densities, but Proposition~\ref{ex:nonex} showed that this is not always the case. Existence probably holds, however, for radial increasing densities.

\begin{conj}\label{conj}
Let $f$ be a radial, increasing density on $\R^n$. Then isoperimetric sets exist for all volumes.
\end{conj}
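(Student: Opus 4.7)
The strategy breaks into two cases based on the behavior of $f$ at infinity. Because $f$ is radial and increasing, the limit $a := \lim_{r\to\infty} f(r) \in (0,+\infty]$ exists. If $a = +\infty$, the statement is already Theorem~\ref{thm2.2}. Assume henceforth that $a<\infty$.

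Fix $V>0$ and take a minimizing sequence $E_j$ with $|E_j|_f = V$ and $P_f(E_j)\searrow\IF(V)$. I would first apply Theorem~\ref{thmSteiner} to replace each $E_j$ by its spherical symmetrization $E_j^*$, which preserves the weighted volume and does not increase the weighted perimeter, so the symmetrized sequence is still minimizing. The advantage is that each slice $E_j^*\cap S(r)$ is now a geodesic cap on a common axis, which makes the subsequent slicing arguments clean. By standard BV compactness, after passing to a subsequence $E_j^*\to F$ in $L^1_{\mathrm{loc}}$ for some set $F$ of locally finite perimeter, with $|F|_f\leq V$ and $P_f(F)\leq \liminf_j P_f(E_j^*)=\IF(V)$. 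Set $V_\infty := V - |F|_f \geq 0$, the weighted volume lost to infinity.

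The goal is to show $V_\infty = 0$, in which case $F$ itself is the desired isoperimetric set. I would run the replacement-by-a-distant-ball argument sketched in the introduction. On one hand, since $f\leq a$ everywhere and $f(r)\to a$, for any $\varepsilon>0$ there is a Euclidean ball $B'$ centered sufficiently far from the origin with $|B'|_f = V_\infty$, disjoint from $F$, and with $P_f(B') \leq n\omega_n^{1/n} a^{1/n} V_\infty^{(n-1)/n}+\varepsilon$; if additionally $f$ is strictly increasing, then $f(r)<a$ at every finite $r$, and the standard estimate (treating $f$ as nearly constant on a small distant ball centered at distance $R\gg 1$) yields
\[
P_f(B') \approx n\omega_n^{1/n} f(R)^{1/n} V_\infty^{(n-1)/n} < n\omega_n^{1/n} a^{1/n} V_\infty^{(n-1)/n}.
\]
On the other hand, I would prove the matching lower bound
\[
\liminf_j P_f(E_j^*) \geq P_f(F) + n\omega_n^{1/n} a^{1/n} V_\infty^{(n-1)/n}
\]
by slicing the tails $E_j^*\setminus B(R)$ along concentric spheres, as in the proof of Theorem~\ref{thm2.2}, using that $f\geq a-o(1)$ there and the Euclidean isoperimetric inequality on the escaping mass. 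Together, the competitor $F\cup B'$ would satisfy $P_f(F\cup B') < \IF(V)$ whenever $V_\infty>0$, contradicting the definition of $\IF(V)$; hence $V_\infty = 0$ and $F$ is isoperimetric. (In the degenerate non-strict subcase where $f\equiv a$ in some tail, the first estimate becomes an equality, and $F\cup B'$ itself realizes the infimum.)

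The decisive step, and the one where I expect the real difficulty to lie, is the escape-of-mass lower bound. Without quantitative control on how $f$ approaches $a$, one cannot rule out a priori that the escaping mass spreads out in such a way that its weighted perimeter remains strictly below $n\omega_n^{1/n} a^{1/n} V_\infty^{(n-1)/n}$, because the perimeter integrand $f$ there is only asymptotically equal to $a$. This is precisely the obstruction that forces the paper's main existence theorems to impose additional growth hypotheses. The spherical symmetrization step should help substantially, since it reduces $E_j^*$ to a one-dimensional radial profile for which the Euclidean isoperimetric inequality on tails becomes essentially a statement about caps on spheres, controllable via the coarea formula and Jensen's inequality in the spirit of Theorem~\ref{thmSteiner}. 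Whether that suffices to close the gap without any extra hypothesis on the rate $f(r)\nearrow a$ is, to my mind, the heart of the conjecture.
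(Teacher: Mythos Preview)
This statement is labelled a \emph{conjecture} in the paper and is left open (see Open Problem~\ref{first}); the paper proves it only under additional growth hypotheses (Proposition~\ref{generalcase}, Theorems~\ref{Thm2A}, \ref{6.11}, \ref{Thm2B}, Corollary~\ref{Thm2D}). Your overall framework---split on whether $a=\infty$ (Theorem~\ref{thm2.2}) or $a<\infty$, take a minimizing sequence, pass to a limit $F$, and try to recover the lost mass $V_\infty$ with a distant ball---is exactly the paper's strategy. But you have mislocated the obstruction.

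The escape-of-mass \emph{lower} bound that you flag as ``the decisive step'' is in fact proved in the paper in full generality: Lemma~\ref{folllem} gives precisely
\[
\IF(V)\ \geq\ P_f(F)\ +\ n(\omega_n a)^{1/n}\,(V-|F|_f)^{(n-1)/n}
\]
assuming only that $f\to a$ at infinity, via the Euclidean isoperimetric inequality on the tail $H_i=F_i\setminus F^+$ where $f\geq a-\varepsilon$. No symmetrization and no growth rate are needed. The genuine obstacle is the \emph{upper} bound, i.e.\ finding a ball $B'$ of prescribed volume, arbitrarily far out, with mean density at most $a$. This is exactly the technical hypothesis isolated in Proposition~\ref{generalcase}, and it is \emph{not} automatic for radial increasing $f$.

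Your argument for the upper bound---``$f(R)<a$, so treating $f$ as nearly constant on a distant ball gives $P_f(B')\approx n\omega_n^{1/n} f(R)^{1/n} V_\infty^{(n-1)/n}<n\omega_n^{1/n}a^{1/n}V_\infty^{(n-1)/n}$''---hides the error in the ``$\approx$''. The ball $B'$ must have volume $V_\infty$, hence Euclidean radius $r\approx (V_\infty/(a\omega_n))^{1/n}$ bounded \emph{below} independently of $R$. By~(\ref{estdens}) its mean density satisfies only $\rho\leq f(R+r)^n/f(R-r)^{n-1}$, and for this to be $\leq a$ one needs $f(R+r)\leq a^{1/n} f(R-r)^{(n-1)/n}$, which is precisely the extra hypothesis~(\ref{ass2A}) of Theorem~\ref{Thm2A}. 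When $f$ approaches $a$ rapidly (e.g.\ $f(r)=a-e^{-r}$), the oscillation $f(R+r)-f(R-r)$ across the fixed-width ball is of the same order as the gap $a-f(R)$, so the mean density can exceed $a$ and your strict inequality fails; this is why the paper must invoke superharmonicity (Corollary~\ref{Thm2D}) for such densities. Spherical symmetrization, while harmless, does not touch this issue. In short: your proposal reproduces the paper's partial-result machinery but does not close the gap, and the gap is on the opposite side from where you placed it.
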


Other open problems are collected in Section~\ref{sectopenpbs}. We will prove this conjecture under a technical condition (Theorem~\ref{generalcase}) implied by various conditions on the growth of the density (Theorems~\ref{Thm2A}, \ref{6.11}, and~\ref{Thm2B}). Note that by Theorem~\ref{thm2.2}, Conjecture~\ref{conj} holds if the density goes to infinity.

\begin{lemma}\label{6.2}
Consider $\R^n$ endowed with a density approaching a limit $a>0$ at infinity. For any $V > 0$, the isoperimetric profile satisfies
\begin{equation}\label{mainineq}
\IF(V) \leq n \big(\omega_n a\big)^{\frac 1n} \, V^{\frac{n-1}{n}}\,.
\end{equation}
Moreover, one has
\begin{equation}\label{mainineq2}
\frac{\IF(V)}{V^{\frac {n-1}n }} \xrightarrow[\ \ V\to\infty\ \ ]{} n \big(\omega_n a\big)^{\frac 1n}\,.
\end{equation}
\end{lemma}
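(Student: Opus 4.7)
The plan is to establish the pointwise upper bound (\ref{mainineq}) by using balls placed far from the origin as test sets, and to obtain the asymptotic (\ref{mainineq2}) by combining this with a matching lower bound coming from the classical Euclidean isoperimetric inequality applied to the ``outside'' part of a generic competitor.

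For (\ref{mainineq}), fix $\eps>0$ and choose $R_\eps$ so that $|f-a|<\eps$ on $\R^n\setminus B(R_\eps)$. Set $r_0:=(V/(a\omega_n))^{1/n}$ and pick any $x_0$ with $|x_0|\ge R_\eps+r_0+1$, so that $B(x_0,r_0+1)\subseteq\R^n\setminus B(R_\eps)$. On the interval $[0,r_0+1]$ the map $r\mapsto |B(x_0,r)|_f$ is continuous and pinched between $(a-\eps)\omega_n r^n$ and $(a+\eps)\omega_n r^n$, so by the intermediate value theorem there exists $r\le (V/((a-\eps)\omega_n))^{1/n}$ with $|B(x_0,r)|_f=V$. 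Testing with this ball,
\[
\IF(V)\le P_f\bigl(B(x_0,r)\bigr)\le (a+\eps)\,n\omega_n r^{n-1}\le n\omega_n^{1/n}\,\frac{a+\eps}{(a-\eps)^{(n-1)/n}}\,V^{(n-1)/n},
\]
and letting $\eps\searrow 0$ gives (\ref{mainineq}).

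For (\ref{mainineq2}), the direction $\limsup\IF(V)/V^{(n-1)/n}\le n(a\omega_n)^{1/n}$ is already contained in (\ref{mainineq}). For the matching liminf, fix $\eps\in(0,a)$, take $R\ge 1$ with $|f-a|<\eps$ on $\R^n\setminus B(R)$, and set $C(R):=|B(R)|_f$, finite by local integrability of $f$. Given any competitor $E$ with $|E|_f=V$, average $\H^{n-1}(E\cap S(\rho))$ over $\rho\in[R,R+1]$ to select a radius $r$ with $\H^{n-1}(E\cap S(r))\le \omega_n\bigl((R+1)^n-R^n\bigr)=:C'_R$, and set $E':=E\setminus B(r)$. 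Since $E'$ lies where $a-\eps<f<a+\eps$, we have $|E'|_{\rm eucl}\ge |E'|_f/(a+\eps)$ and $P_f(E')\ge (a-\eps)\PE(E')$; moreover $|E'|_f\ge V-C(R)$ and $P_f(E')\le P_f(E)+(a+\eps)C'_R$. Applying the Euclidean isoperimetric inequality $\PE(E')\ge n\omega_n^{1/n}|E'|_{\rm eucl}^{(n-1)/n}$ and chaining these four estimates gives, for every such $E$,
\[
P_f(E)\ge n\omega_n^{1/n}\,\frac{a-\eps}{(a+\eps)^{(n-1)/n}}\,\bigl(V-C(R)\bigr)^{(n-1)/n}-(a+\eps)C'_R.
\]
Dividing by $V^{(n-1)/n}$, letting $V\to\infty$ with $R$ and $\eps$ fixed, and finally $\eps\searrow 0$, yields $\liminf_{V\to\infty}\IF(V)/V^{(n-1)/n}\ge n(a\omega_n)^{1/n}$, which paired with (\ref{mainineq}) delivers (\ref{mainineq2}).

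The main obstacle is the lower bound: one cannot just use $P_f(E)\ge (a-\eps)\PE(E)$ on the whole boundary, because $f$ is not controlled from above on $B(R)$; nor can one simply discard $E\cap B(R)$ without accounting for the newly created perimeter along the slicing sphere. The averaging trick in the annulus $B(R+1)\setminus B(R)$ defuses this: it produces a slicing radius whose extra perimeter cost is an absolute constant depending only on $R$, hence negligible once we divide by $V^{(n-1)/n}$ and let $V\to\infty$.
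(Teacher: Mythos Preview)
Your proof is correct and follows essentially the same strategy as the paper: bound $\IF(V)$ from above by testing with balls placed where $f\approx a$, and bound it from below by applying the Euclidean isoperimetric inequality to the portion of a competitor lying outside a large ball. Two minor remarks. First, your averaging trick over $[R,R+1]$ is more careful than necessary: the paper simply bounds the slice $\H^{n-1}(E\cap S(R))$ by the full sphere area $n\omega_n R^{n-1}$, which already gives a constant depending only on $R$ and hence negligible after dividing by $V^{(n-1)/n}$. Second, since your chosen radius $r$ lies in $[R,R+1]$, the volume you discard is bounded by $|B(R+1)|_f$ rather than $|B(R)|_f$; this is harmless for the argument but worth stating precisely.
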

To prove this lemma, the following definition is quite useful.
\begin{definition}\label{defmd}
Given a ball $B$ in $\R^n$ with density, we will say that its \emph{mean density} is the number $\rho$ such that
\begin{equation}\label{defmeandens}
P(B) = n \big( \omega_n \rho\big)^{\frac 1 n}\, \big|B\big|^{\frac{n-1}{n}}\,.
\end{equation}
\end{definition}
The reason for the choice of the name is very easy to understand: if a ball $B$ of radius $r$ lies in a region where the density is constantly $\rho$, then its volume and perimeter are
\begin{align*}
|B| = \rho \omega_n r^n \,, && P(B) = \rho n\omega_n r^{n-1}\,,
\end{align*}
so by~(\ref{defmeandens}) we get that the mean density of $B$ is $\rho$. If we call $\rho_{\rm min}$ and $\rho_{\rm sup}$ the minimum and the supremum of $f$ inside $B$, it is not true in general that the mean density satisfies $\rho_{\rm min}\leq \rho\leq \rho_{\rm sup}$. What is true is that
\begin{equation}\label{estdens}
\frac{\rho_{\rm min}^{n}}{\rho_{\rm sup}^{n-1}} \leq \rho \leq \frac{\rho_{\rm sup}^{n}}{\rho_{\rm min}^{n-1}}\,;
\end{equation}
both the inequalities are sharp, the extreme cases being when $\rho\equiv \rho_{\rm sup}$ inside $B$ and $\rho\equiv \rho_{\rm min}$ on $\partial B$, or when $\rho\equiv \rho_{\rm min}$ inside $B$ and $\rho\equiv \rho_{\rm sup}$ on $\partial B$ (this latter is only a limiting case since the density would not be lower semicontinuous). To prove~(\ref{estdens}), just note that
\begin{align*}
\rho_{\rm min} \omega_n r^n \leq |B| \leq  \rho_{\rm sup} \omega_n r^n\,, &&
\rho_{\rm min} n\omega_n r^{n-1} \leq P(B) \leq \rho_{\rm sup} n\omega_n r^{n-1}\,.
\end{align*}
We can now give the proof of Lemma~\ref{6.2}.
\begin{proof}[Proof of Lemma~\ref{6.2}]
Fix a volume $V>0$, fix $\eps>0$, and let $B$ be a ball with volume $|B|=V$. If we call, as before, $\rho_{\rm min}$ and $\rho_{\rm sup}$ the minimum and the maximum of $f$ inside $B$, then we can assume that
\begin{align*}
\rho_{\rm min} \geq a - \eps\,,&& \rho_{\rm sup} \leq a+\eps\,,
\end{align*}
provided that the ball is taken sufficiently far from the origin. Hence, by~(\ref{estdens}) we have that the mean density $\rho$ of $B$ satisfies
\[
\rho \leq \frac{(a+\eps)^n}{\big(a-\eps\big)^{n-1}} \leq a+ 3n\eps \,,
\]
provided that $\eps$ is small enough, and thus
\[
\IF(V) \leq P(B) = n \big( \omega_n \rho\big)^{\frac 1 n}\, |B|^{\frac{n-1}{n}}
\leq n \big( \omega_n (a+3n\eps)\big)^{\frac 1 n}\, V^{\frac{n-1}{n}}\,.
\]
Since $\eps>0$ was arbitrary, we get~(\ref{mainineq}).\par
Let us now concentrate on~(\ref{mainineq2}). To show its validity, fix an arbitrary $\eps>0$ and take a big radius $R=R(\eps)$ such that $a-\eps\leq f\leq a+\eps$ out of the ball $B(R)$. Consider now a set $F$ of huge volume $V=|F|$. Then, the Euclidean volume $\VE{F}$ is very big, thus the standard isoperimetric inequality ensures us that
\[
\PE\big(F\setminus B(R)\big) \geq
n \omega_n^{\frac 1n} \Big(\VE{F\setminus B(R)} \Big)^{\frac{n-1}n}
\geq n \omega_n^{\frac 1n} \Big(\VE{F}-\omega_nR^n\Big)^{\frac{n-1}n}\,.
\]
By the definition of $R$ we deduce
\[\begin{split}
P(F) &= \int_{\partial F} f(x) \, d\H^{n-1}(x)
\geq (a-\eps) \,\H^{n-1}\big(\partial F\setminus \overline{B(R)}\big)\\
&\geq (a-\eps)\, \Big(\PE\big(F\setminus B(R)\big) - n\omega_n R^n\Big)
\geq (a-\eps)\, \Big( n \omega_n^{\frac 1n} \Big(\VE{F}-\omega_nR^n\Big)^{\frac{n-1}n}- n\omega_n R^n\Big)\\
&\geq(a-\eps)\,\Bigg(n\omega_n^{\frac 1n}\bigg(\frac{V-\big|B(R)\big|}{a+\eps}-\omega_nR^n\bigg)^{\frac{n-1}n}-n\omega_n R^n\Bigg)
\geq (a-3\eps)^{\frac 1n} n \omega_n^{\frac 1n}\, V^{\frac{n-1} n}\,,
\end{split}\]
where the last inequality holds provided $V \gg R$. Since $F$ was an arbitrary set of volume $V$, we deduce that
\[
\IF(V) \geq (a-3\eps)^{\frac 1n} n \omega_n^{\frac 1n}\, V^{\frac{n-1} n}
\]
and finally, since $\eps$ was arbitrary, also recalling~(\ref{mainineq}) we get~(\ref{mainineq2}).
\end{proof}

We can now show that Conjecture~\ref{conj} holds true if a particular technical condition holds. Recall that, thanks to Theorem~\ref{thm2.2}, if a density $f$ is radial then we may assume that it approaches a finite limit at infinity.

\begin{prop}\label{generalcase}
Let $f$ be a density on $\R^n$ approaching a finite limit $a>0$ at infinity, and assume that isoperimetric sets are bounded (see Corollary~\ref{corbdd}). Let $V>0$ and suppose that, for any $0<\widetilde V\leq V$, there exists a ball of volume $\widetilde V$ arbitrarily far from the origin and having mean density at most $a$. Then there exists an isoperimetric set of volume $V$.
\end{prop}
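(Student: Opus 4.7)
The plan is to carry out a concentration-compactness argument in the spirit of the sketch from the introduction. I will start with a minimizing sequence $E_j$ of sets with $|E_j|_f = V$ and $P_f(E_j) \to \IF(V)$. By the standard BV compactness theorem, up to a subsequence $E_j \to F$ in $L^1_{\rm loc}$, where $F$ has finite perimeter and $|F|_f =: V_1 \leq V$. If $V_1 = V$, then lower semicontinuity of perimeter immediately gives $P_f(F) \leq \IF(V)$, so $F$ is isoperimetric and we are done. The delicate case is $V_1 < V$, when a mass $V - V_1$ is lost to infinity.

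The heart of the argument will be the sharp perimeter-at-infinity estimate
\[
\IF(V) \;\geq\; P_f(F) + n(\omega_n a)^{1/n}(V - V_1)^{(n-1)/n}.
\]
To establish it I fix $\eps > 0$ and pick $R_0$ so large that $|f - a| < \eps$ outside $B(R_0)$ and $|F \cap B(R_0)|_f > V_1 - \eps$. For each $j$ I use the coarea formula on the annulus $B(2R_0) \setminus B(R_0)$ to select a radius $r_j \in [R_0, 2R_0]$ with $\H^{n-1}_f(E_j \cap S(r_j)) \leq V/R_0$, and then split using the identity $P_f(A) = P_f(A \cap B) + P_f(A \setminus B) - 2\H^{n-1}_f(A \cap \partial B)$ with $A = E_j$ and $B = B(r_j)$. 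The inside part I will estimate below by $P_f(F; B(R_0))$ via lower semicontinuity of the perimeter on the open ball $B(R_0)$, while the outside part sits in the region where $f \in [a - \eps, a + \eps]$, so the classical Euclidean isoperimetric inequality applied to a set of Euclidean volume roughly $(V - V_1)/a$ yields a weighted perimeter at least $n(\omega_n a)^{1/n}(V - V_1)^{(n-1)/n}$ in the limit. Sending $j \to \infty$, then $R_0 \to \infty$ and $\eps \to 0$, will produce the claimed estimate.

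Next I will combine this with the ball hypothesis. For any $\delta > 0$ I take a bounded competitor $E'$ of volume $V_1$ with $P_f(E') < \IF(V_1) + \delta$ and, by hypothesis, I find a ball $B^*$ of volume $V - V_1$ far enough from $E'$ to be disjoint from it, with mean density at most $a$; Definition~\ref{defmd} gives $P_f(B^*) \leq n(\omega_n a)^{1/n}(V - V_1)^{(n-1)/n}$, so $\IF(V) \leq \IF(V_1) + n(\omega_n a)^{1/n}(V - V_1)^{(n-1)/n}$. Comparing with the previous estimate forces $P_f(F) \leq \IF(V_1)$, and since $|F|_f = V_1$ the reverse inequality is trivial; hence $F$ is itself isoperimetric of volume $V_1$. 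The standing boundedness assumption then tells me $F$ is bounded, so I can pick the ball $B^*$ so far out that it is disjoint from $F$, and $F \cup B^*$ has volume $V$ and perimeter $\leq P_f(F) + P_f(B^*) \leq \IF(V)$, proving it is isoperimetric of volume $V$.

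The hard part will be the sharp perimeter-at-infinity estimate: the density is not assumed radial or monotone, so the spherical-slicing trick of Theorem~\ref{thm2.2} is unavailable, and the constant $n(\omega_n a)^{1/n}$ must be extracted exactly -- any loss on one side would break the comparison with the upper bound coming from the ball hypothesis. The coarea selection of $r_j$, the local lower semicontinuity on $B(R_0)$, and the passage to the near-Euclidean regime outside $B(R_0)$ are the three ingredients that have to be calibrated carefully to make the constants match.
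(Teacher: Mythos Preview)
Your proposal is correct and follows essentially the same concentration--compactness strategy as the paper: extract a limit $F$, prove the sharp escaped-mass estimate $\IF(V)\geq P_f(F)+n(\omega_n a)^{1/n}(V-|F|_f)^{(n-1)/n}$, conclude $F$ is isoperimetric and hence bounded, then glue on a far-away ball of mean density $\leq a$. The paper isolates the first two points as a separate lemma (Lemma~\ref{folllem}) and proves that $F$ is isoperimetric directly by a replacement argument rather than via the upper/lower bound comparison you use, and it cuts using an $r$-neighborhood $B(F,r)$ of $F$ rather than your coarea-selected sphere $S(r_j)$; these are minor technical variations of the same idea.
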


In the proof of the above proposition, we will need the following simple useful result, which holds without any assumptions on the density $f$.
\begin{lemma}\label{folllem}
Let $f$ be any density on $\R^n$ and $V>0$. Let $F_i$ be a sequence of sets of volume $V$ such that $P(F_i)\to \IF(V)$, and such that $\Chi{F_i}\xrightarrow{\,L^1_{\rm loc}\,}\Chi{F}$ for some set $F$. Then, $F$ is an isoperimetric set of volume $|F|$. In addition, if $f$ approaches a limit $a>0$ at infinity, then the following estimate holds,
\begin{equation}\label{follest}
\IF(V)\geq  P(F) + n \big(\omega_n a\big)^{\frac 1 n} \big( V -|F|\big)^{\frac{n-1} n}\,.
\end{equation}
\end{lemma}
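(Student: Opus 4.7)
The plan is to handle both claims by a slicing argument: for each $F_i$, cut it at a sphere $\partial B(R_i)$ whose radius is chosen via the coarea identity $\int_0^\infty \H^{n-1}_f(F_i\cap \partial B(r))\,dr=V$ so that the slice contribution $\H^{n-1}_f(F_i\cap \partial B(R_i))$ is small. The $L^1_{\rm loc}$-convergence then splits $F_i$ into a \emph{bulk} part converging to $F$ inside fixed balls and a \emph{tail} part carrying the excess mass $m:=V-|F|_f$ off to infinity, and the two contributions to $P(F_i)$ are estimated separately.

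The fundamental estimate in the setup is that, for $R_i\to\infty$, lower semicontinuity of the $f$-perimeter on each fixed open ball $B(R^*)\subset B(R_i)$ combined with $P_f(F;B(R^*))\nearrow P(F)$ yields $\liminf_i P_f(F_i;B(R_i))\geq P(F)$, whence $\limsup_i P_f(F_i;\R^n\setminus\bar B(R_i))\leq\IF(V)-P(F)$. For the first claim, if $F$ were not isoperimetric for its own volume $|F|_f$, I would pick a bounded competitor $G$ with $|G|_f=|F|_f$ and $P(G)<P(F)-2\eta$, then with $L>V/\eta$ and $R_i\in[i,i+L]$ (so $R_i\to\infty$) selected by coarea, set $\tilde F_i:=G\cup(F_i\setminus B(R_i))$. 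This satisfies $|\tilde F_i|_f\to V$ and
\[
\limsup_i P(\tilde F_i)\leq P(G)+\bigl(\IF(V)-P(F)\bigr)+V/L<\IF(V)-\eta.
\]
A small volume adjustment at a Lebesgue-continuity point of $f$ then produces competitors of exact volume $V$ with perimeter strictly below $\IF(V)$, contradicting the definition of $\IF(V)$.

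For the second claim, fix $\eta>0$, choose $R_0$ so large that $f\in[a-\eta,a+\eta]$ on $\R^n\setminus B(R_0)$, and pick $R_i\in[R_0,R_0+V/\eta]$ by the same coarea procedure. Applying the standard Euclidean isoperimetric inequality to $F_i\setminus B(R_i)\subset\R^n$ and weighting by $f\geq a-\eta$ outside $B(R_0)$ gives
\[
P_f\bigl(F_i;\R^n\setminus\bar B(R_i)\bigr)\geq (a-\eta)\,n\omega_n^{1/n}\left(\frac{|F_i\setminus B(R_i)|_f}{a+\eta}\right)^{\!(n-1)/n}-O(\eta),
\]
where the $O(\eta)$ term absorbs the sphere-cut contribution via the choice of $R_i$. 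Combining with the inside bound from the setup and $|F_i\setminus B(R_i)|_f\to m$ (a consequence of $L^1_{\rm loc}$-convergence together with $|F\cap B(R_0+L)|_f\to|F|_f$ as $R_0\to\infty$), passing to $\liminf$ and letting $\eta\to 0$ yields~\eqref{follest}.

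The main obstacle is the joint coarea-based choice of $R_i$: the slice through $\partial B(R_i)$ must simultaneously (a) contribute little to the perimeter and (b) yield the correct inside/outside volume split. The coarea pigeonhole only guarantees approximate volume matching, so in the first claim the resulting $o(1)$ volume error must be absorbed by a local perturbation of $o(1)$ perimeter cost --- which requires locating a point where $f$ is locally controlled, a technical step that needs care given that $f$ is only assumed lower semicontinuous.
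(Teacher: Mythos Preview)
Your approach differs from the paper's in the choice of cutting surface: you slice $F_i$ at origin-centered spheres $\partial B(R_i)$, whereas the paper cuts at $\partial F^+$ with $F^+:=B(F,r)$ the $r$-neighborhood of the limit set. The paper's choice makes the slice contribution $\H^{n-1}_f(F_i\cap\partial F^+)\to 0$ automatic from local $L^1$-convergence (since $\partial F^+$ lies at positive distance from $F$), and gives $|F_i\cap F^+|\to|F|$ directly. For the first claim the paper then takes a bounded competitor $E$ with $|E|>|F|$ and $P(E)<P(F)$, so that $\tilde F_i:=E\cup(F_i\setminus F^+)$ has volume eventually \emph{exceeding} $V$, and invokes the monotonicity of $\IF$ (Theorem~\ref{Lambda-increasing}, which needs $f$ nondecreasing) to trim down to volume $V$. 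Your local volume-adjustment device is in principle more general, since it avoids this monotonicity appeal.

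However, your implementation of the first claim has a real gap. With $R_i\in[i,i+L]\to\infty$, the assertion $|\tilde F_i|_f\to V$ is unjustified: the escaping mass $m=V-|F|_f$ may sit at distance $o(i)$ from the origin (say each $F_i\subset B(\sqrt i)$), in which case $F_i\setminus B(R_i)=\emptyset$ and $|\tilde F_i|_f=|G|_f=|F|_f$. You then recover only $\IF(|F|)\leq P(G)<P(F)$, which is your hypothesis, not a contradiction; and the volume deficit is of order $m$, not $o(1)$, so your ``small volume adjustment'' cannot repair it. There is also an inconsistency: your setup derives the inside bound $\liminf_i P_f(F_i;B(R_i))\geq P(F)$ under the assumption $R_i\to\infty$, yet for the second claim you take $R_i\in[R_0,R_0+V/\eta]$ with $R_0$ fixed and still invoke that bound. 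The correct ordering for both claims is a two-scale limit: fix $R_0$ large (so that $G\subset B(R_0)$, $|F\setminus B(R_0)|_f<\epsilon$, and $P_f(F;B(R_0))>P(F)-\epsilon$), choose $R_i\in[R_0,R_0+L]$ by coarea (this now \emph{does} give $|F_i\setminus B(R_i)|_f\to m+O(\epsilon)$ via $L^1_{\rm loc}$-convergence on the bounded set $B(R_0+L)$), pass to the limit in $i$, and only then send $R_0\to\infty$. With this ordering your argument goes through.
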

\begin{proof}
Take some positive number $r>0$, let $F^+=B(F,r)$ be the $r-$neighborhood of $F$, and for any $i\in\N$ define
\begin{align*}
G_i = F_i \cap F^+\,, && H_i = F_i \setminus F^+\,.
\end{align*}
Since $\big| F_i \setminus F\big|\to 0$ for $i\to \infty$, for all $r>0$ except at most countably many one has
\begin{equation}\label{aer}
\H^{n-1}_f \big( F_i \cap \partial F^+ \big) \to 0\,.
\end{equation}
Observe now that
\begin{equation}\label{aer0}
P\big(F_i\big) = P\big(G_i\big) + P\big( H_i\big) -2 \H^{n-1}_f \big( F_i \cap \partial F^+\big)\,.
\end{equation}
Since $\Chi{G_i}\xrightarrow{\,L^1_{\rm loc}\,}\Chi{F}$ we have by lower semicontinuity
\begin{equation}\label{aer1}
P(F)  \leq \liminf P(G_i)\,.
\end{equation}
Assume that $F$ is not an isoperimetric set of volume $|F|$. Then there exists a bounded set $E$ with $|E|>|F|$ and $P(E)<P(F)$. Take now $r>0$ satisfying~(\ref{aer}) and big enough so that $E\subset\subset F^+$, and let
\[
\widetilde F_i = \big(F_i \setminus F^+ \big)\cup E= H_i \cup E \,.
\]
By construction, $\big| \widetilde F_i\big| > V$ for all $i\gg 1$. Using the fact that $E\subset\subset F^+$, Theorem~\ref{Lambda-increasing}, (\ref{aer0}) and~(\ref{aer1}), one then finds
\[\begin{split}
\IF(V) &\leq \liminf P\big(\widetilde F_i\big) = \liminf \Big(P\big(H_i)+ P(E)\Big)\\
&= \liminf \Big(P\big(F_i\big) - P\big(G_i\big) +2 \H^{n-1}_f \big( F_i \cap \partial F^+\big)\Big)+ P(E)
= \IF(V)-P(F)+P(E)\\
&<\IF(V)\,.
\end{split}\]
This contradiction shows that $F$ is isoperimetric.\par
Let us show now the second part of the claim. To do so, assume that the density $f$ approaches a limit $a$ at infinity, and fix some $\eps>0$. If we take $r$ big enough, depending on $\eps$, then it is admissible to assume that $f\geq a-\eps$ outside $B(F,r)$. Arguing exactly as in the proof of~(\ref{estdens}) we can then immediately deduce that
\begin{equation}\label{aer2}
\liminf P(H_i) \geq n \big( \omega_n (a-\eps)\big)^{\frac 1n} \big( V - |F|\big)^{\frac{n-1}n}\,.
\end{equation}
Inserting~(\ref{aer1}) and~(\ref{aer2}) into~(\ref{aer0}), recalling that $P(F_i)\to \IF(V)$, and assuming without loss of generality that~(\ref{aer}) holds for $r$, we immediately get that
\[
\IF(V)\geq  P(F) + n \big(\omega_n (a-\eps)\big)^{\frac 1 n} \big( V -|F|\big)^{\frac{n-1} n}\,.
\]
Since $\eps>0$ was arbitrary, the validity of~(\ref{follest}) follows.
\end{proof}
We now obtain Proposition~\ref{generalcase} as an easy corollary.
\begin{proof}[Proof of Proposition~\ref{generalcase}]
Let us start by taking a sequence of sets $F_i$ such that $|F_i|=V$ and $P(F_i)\rightarrow \IF(V)$. Up to subsequences, there exists a set $F$ such that $\Chi{F_i}\xrightarrow{\,L^1_{\rm loc}\,}\Chi{F}$, hence we can apply Lemma~\ref{folllem}. If $|F|=V$, then we are done, since $F$ is an isoperimetric set of volume $V$. Otherwise, assume that $\widetilde V= V- |F| >0$, and recall that $F$ is bounded by assumption. Therefore, we can take a ball $B$ of volume $\widetilde V$ having mean density smaller than $a$ and not intersecting $F$. Hence by~(\ref{follest})
\[
P\big( B \cup F \big) = P(B) + P(F)
\leq n \big( \omega_n a \big)^{\frac 1 n}\, |B|^{\frac{n-1}{n}} + P(F)
\leq \IF(V)\,.
\]
Thus $B\cup F$ is an isoperimetric set of volume $V$, and the proof is complete.
\end{proof}

A similar argument shows that Conjecture~\ref{conj} is true if the volume $V$ is sufficiently small.

\begin{prop}\label{Prop6.6}
Let $f$ be a continuous density on $\R^n$ approaching a finite limit $a>0$ at infinity, and such that $f(\hat x)<a$ for some $\hat x\in\R^n$. Then there exist isoperimetric sets for all small volumes. In other words, there is some $V_0>0$ such that there exists an isoperimetric set of volume $V$ for each $0<V< V_0$.
\end{prop}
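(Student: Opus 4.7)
The plan is to mimic Proposition~\ref{generalcase} by showing that, when $V$ is small enough, every minimizing sequence converges without losing volume to infinity; the ``recovery ball'' that would replace any lost volume will be placed near $\hat x$, where the density is strictly less than $a$, rather than far from the origin.

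By continuity of $f$ at $\hat x$, I would fix $\delta>0$ and $r_0>0$ with $f(x)\in[b-\delta,b+\delta]$ on $B(\hat x,r_0)$, where $b=f(\hat x)$, and with $\delta$ so small that $a':=(b+\delta)^n/(b-\delta)^{n-1}<a$; the bound~(\ref{estdens}) from Definition~\ref{defmd} then guarantees that every Euclidean ball $B\subseteq B(\hat x,r_0)$ has mean density at most $a'$, so $P(B)\leq n(\omega_n a')^{1/n}|B|_f^{(n-1)/n}$. Given any minimizing sequence $F_i$ with $|F_i|_f=V$, the global lower bound $f\geq m>0$ forces $\VE{F_i}\leq V/m$, so Euclidean volumes and perimeters are uniformly bounded and standard compactness for sets of finite perimeter produces a subsequence with $F_i\to F$ in $L^1_{\mathrm{loc}}$. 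Lemma~\ref{folllem} then identifies $F$ as an isoperimetric set of volume $|F|_f$ and provides the lower bound
\[
\IF(V)\geq P(F) + n(\omega_n a)^{1/n}\bigl(V-|F|_f\bigr)^{(n-1)/n}.
\]

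If $|F|_f=V$ we are done, so assume $\widetilde V:=V-|F|_f>0$; I then aim to contradict the displayed inequality by constructing a competitor $G=F\cup B$, where $B\subseteq B(\hat x,r_0)$ is a Euclidean ball so that $|G|_f=V$ and $P(B)\leq n(\omega_n a'')^{1/n}\widetilde V^{(n-1)/n}$ for some $a''<a$. Together with the subadditivity $P(G)\leq P(F)+P(B)$, this would give $\IF(V)\leq P(G)<P(F)+n(\omega_n a)^{1/n}\widetilde V^{(n-1)/n}$, contradicting the bound above.

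The main obstacle is the selection of $B$, because the limit $F$ is not assumed bounded (the hypotheses of Corollary~\ref{corbdd} are not part of Proposition~\ref{Prop6.6}) and might in principle crowd the neighborhood of $\hat x$. My plan is to invoke the Hardy--Littlewood weak $(1,1)$ maximal inequality for $\chi_F$: for a fixed small $\eta>0$, the set of $y\in B(\hat x,r_0/2)$ where $M(\chi_F)(y)>\eta$ has Euclidean measure at most $C_n\VE{F}/\eta\leq C_n V/(m\eta)$, which is strictly smaller than $\VE{B(\hat x,r_0/2)}$ once $V_0$ is chosen small enough. Picking $y$ in the complement guarantees $\VE{B(y,s)\cap F}\leq\eta\,\VE{B(y,s)}$ for every $s>0$, whence $|B(y,s)\cap F|_f\leq\eta(b+\delta)/(b-\delta)\,|B(y,s)|_f$. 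The map $s\mapsto|B(y,s)\setminus F|_f$ is then continuous and grows from $0$ well past the value $\widetilde V$ before $s=r_0/2$, so the intermediate value theorem furnishes $s^*\in(0,r_0/2]$ with $|B(y,s^*)\setminus F|_f=\widetilde V$. A short algebraic manipulation yields $|B(y,s^*)|_f\leq\widetilde V/(1-C\eta)$, and hence $P(B(y,s^*))\leq n\bigl(\omega_n a'/(1-C\eta)^{n-1}\bigr)^{1/n}\widetilde V^{(n-1)/n}$; fixing $\eta$ small enough that $a'/(1-C\eta)^{n-1}<a$ closes the contradiction.
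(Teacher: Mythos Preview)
Your proof is correct and follows the same strategy as the paper: take a limit $F$ of a minimizing sequence via Lemma~\ref{folllem}, then derive a contradiction from $\widetilde V>0$ by placing a recovery ball near $\hat x$ where the mean density is strictly below $a$. The one difference is that the paper simply asserts the existence of a ball $B\subseteq B(\hat x,r)$ with $|F\cup B|=V$ and $|F\cap B|\leq\eta$ (relying implicitly on $|F|_f\leq V$ being small), whereas you justify this step explicitly via the Hardy--Littlewood maximal inequality; this is more than needed (a pigeonhole over a finite family of disjoint sub-balls would suffice), but it is correct and makes the choice of center $y$ quantitatively clean.
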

\begin{proof}
Let us start arguing exactly as in Proposition~\ref{generalcase}: let $F$ be the $L^1_{\rm loc}$ limit of a sequence of sets of volume $V$ minimizing the perimeter, which is an isoperimetric set by Lemma~\ref{folllem}. Let $\widetilde V= V - |F|$, and assume that $\widetilde V>0$ since otherwise $F$ is already the required isoperimetric set. By the assumption $f(\hat x)<a$ and by the continuity of $f$, there exist some $\delta>0$ and $r>0$ such that every ball $B$ contained in $B(\hat x,r)$ has mean density smaller than $a-\delta$. Let then $\eta>0$ be a small constant to be determined later: if $V$ is small compared to $\big|B(\hat x,r)\big|$, there exists some ball $B\subseteq B(\hat x,r)$ such that
\begin{align*}
\big| F \cup B\big| = V\,, && \big| F \cap B\big|\leq \eta \,.
\end{align*}
Recalling~(\ref{follest}), if $\eta$ is small enough compared to $\delta$ we can estimate
\[\begin{split}
\IF(V) &\leq P\big( F \cup B\big) \leq P(F) + P(B)\leq
P(F) + n\big(\omega_n(a-\delta)\big)^{\frac 1n} |B|^{\frac{n-1}n}\\
&\leq P(F) + n\big(\omega_n(a-\delta)\big)^{\frac 1n} \big|\widetilde V +\eta\big|^{\frac{n-1}n}
< P(F) + n\big(\omega_na\big)^{\frac 1n} \widetilde V ^{\frac{n-1}n}<\IF(V)\,,
\end{split}\]
which gives a contradiction. Therefore, we deduce that necessarily $\widetilde V=0$, so that the existence of an isoperimetric set is given by $F$ itself.
\end{proof}

\begin{remark}{\rm
Notice that in the above proof we showed something stronger than the existence of isoperimetric sets for small volume $V$. In fact, we have proved that for every minimizing sequence there is no mass vanishing at infinity.
}\end{remark}

When the density is smooth, we can now strengthen Proposition~\ref{Prop6.6} to conclude that small isoperimetric sets are ${\rm C}^1$ close to round balls.

\begin{prop}
Let $f$ be a smooth density on $\R^n$ approaching a finite limit $a > \inf_{\R^n} f$ at infinity. Then for small volume isoperimetric sets exist and are smoothly close to round balls near a point of minimum density.
\end{prop}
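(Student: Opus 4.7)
Existence for small volumes is immediate from Proposition~\ref{Prop6.6}, once one verifies its hypothesis $f(\hat x)<a$. This holds because $f$ is smooth with $f\to a$ at infinity and $m:=\inf_{\R^n}f<a$: the sublevel set $\{f\le(a+m)/2\}$ is closed and bounded, hence compact and nonempty, so $f$ attains its global minimum $m$ at some point $x_0\in\R^n$.

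For the shape statement, the plan is a blow-up argument. Let $V_k\searrow 0$ and let $E_k$ be an isoperimetric set of volume $V_k$. Testing against small Euclidean balls at $x_0$ (where $f$ is nearly constant equal to $m$) yields the sharp upper bound
\[
\IF(V_k)\le n(\omega_n m)^{1/n}\,V_k^{(n-1)/n}(1+o(1))\,.
\]
Combining this with $V_k=|E_k|_f\ge m\,\VE{E_k}$ and $P_f(E_k)\ge f_{\min,k}\,\PE(E_k)$, where $f_{\min,k}:=\inf_{\overline{E_k}}f$, and applying the Euclidean isoperimetric inequality to $E_k$ forces $f_{\min,k}\to m$ and $\VE{E_k}=V_k/m\,(1+o(1))$. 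Choose $x_k\in\overline{E_k}$ with $f(x_k)\to m$; passing to a subsequence, $x_k\to x_\infty$ with $f(x_\infty)=m$. Setting $r_k:=V_k^{1/n}$ and $\widehat E_k:=r_k^{-1}(E_k-x_k)$, the sets $\widehat E_k$ are isoperimetric for the rescaled densities $\hat f_k(y):=f(x_k+r_k y)$, which by smoothness of $f$ converge in $C^\infty_{\rm loc}$ to the constant $m$. Uniform bounds on Euclidean volume and perimeter yield an $L^1_{\rm loc}$ subsequential limit $\widehat E_\infty$, and a comparison argument in the spirit of Lemma~\ref{folllem} identifies $\widehat E_\infty$ as a Euclidean isoperimetric set, hence a round ball.

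The main obstacle I anticipate consists of two intertwined difficulties. First, to truly say $E_k$ is contained near $x_\infty$ rather than just meeting a small neighborhood of a minimum, one must exclude the possibility that $E_k$ splits into components drifting apart; this can be handled by a quantitative stability argument (for instance, the Fusco--Maggi--Pratelli quantitative isoperimetric inequality), showing that $E_k$ is $L^1$-close to a single Euclidean ball of volume $V_k/m$. Second, upgrading the $L^1_{\rm loc}$ convergence of $\widehat E_k$ to smooth graphical convergence of $\partial\widehat E_k$ over $\partial\widehat E_\infty$ requires regularity theory: since $\hat f_k\to m$ smoothly, the $\widehat E_k$ are almost-minimizers of the Euclidean perimeter with vanishing gauge, and Allard's regularity (equivalently, the almost-minimizer theory described in~\cite{M1}) supplies uniform $C^{1,\alpha}$ graphs over the limit sphere. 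Schauder iteration on the constant generalized curvature equation of Definition~\ref{X.x} then promotes convergence to $C^k$ for every $k$, which on returning to the original scale yields the claim.
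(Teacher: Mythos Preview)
Your existence step via Proposition~\ref{Prop6.6} and your endgame (Allard regularity plus Schauder on the constant-curvature equation) are correct and coincide with the paper's route. The gap is in the middle: the concentration and non-splitting step.

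The inequalities you list --- $V_k\ge m\,\VE{E_k}$, $P_f(E_k)\ge f_{\min,k}\,\PE(E_k)$, the Euclidean isoperimetric inequality, and the sharp upper bound on $\IF(V_k)$ --- combine only to give $\VE{E_k}\le (V_k/m)(1+o(1))$, which is already the trivial consequence of $f\ge m$. They do \emph{not} force $f_{\min,k}\to m$ or $\VE{E_k}\sim V_k/m$: if part of $E_k$ sits where $f$ is large, $\VE{E_k}$ may be as small as $V_k/\sup f$ and no contradiction arises. For the same reason your proposed remedy via Fusco--Maggi--Pratelli does not close the gap: that inequality needs a small \emph{Euclidean} isoperimetric deficit, but from $f\ge m$ you only obtain $\PE(E_k)\le n\omega_n^{1/n}(V_k/m)^{(n-1)/n}(1+o(1))$, while the comparison quantity $n\omega_n^{1/n}\VE{E_k}^{(n-1)/n}$ can be smaller by the fixed factor $(m/\sup f)^{(n-1)/n}$. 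The deficit is therefore not known to vanish, and FMP cannot be invoked. Without concentration, your rescaled densities $\hat f_k$ need not converge to the constant $m$ either, so the blow-up limit cannot be identified.

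The paper supplies the missing diameter control by a different mechanism. Since $E_k$ is isoperimetric for a smooth density, $\partial E_k$ has constant generalized mean curvature, and a Heinze--Karcher argument bounds the \emph{classical} mean curvature by $C\,V_k^{-1/n}$. Monotonicity of the mass ratio then gives, for every $p\in\partial E_k$, at least $c\,V_k^{(n-1)/n}$ of perimeter inside $B\bigl(p,V_k^{1/n}\bigr)$. Since the total perimeter is at most $C\,V_k^{(n-1)/n}$, the boundary $\partial E_k$ --- and hence $E_k$ --- is covered by a bounded number $C_5$ of balls of radius $C_6\,V_k^{1/n}$. After rescaling by $V_k^{-1/n}$, the limit is isoperimetric in $C_5$ disjoint copies of $\R^n$ with constant density and is therefore a single round ball; a final comparison pins that ball to a point of minimum density. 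This mean-curvature/monotonicity step, not quantitative stability, is the ingredient your proposal lacks.
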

\begin{proof}
Let $F$ be an isoperimetric set of sufficiently small volume, which exists by Proposition~\ref{Prop6.6}. The proof that $F$ is ${\rm C}^\infty$ close to a round ball follows the argument in~\cite[Sect. 2]{MJ}, which we now summarize and which may be consulted for details and further references. By Heinze-Karcher, the classical mean curvature of $\partial F$ is bounded by $C_1 P(F)/|F|$, which in turn is less than $C_2 |F|^{-\frac 1 n}$. By  monotonicity, the surface area inside a ball of radius $|F|^{\frac 1n}$ about a point of the surface is at least $C_3 |F|^{\frac{n-1} n}$. Since $P(F) \leq C_4 |F|^{\frac{n-1}n}$, we get that $\partial F$ and hence $F$ is contained inside $C_5$ balls of radius $C_6 |F|^{\frac 1n}$. Let $|F|$ approach $0$ and scale up each ball by $|F|^{-\frac 1n}$. The limit is an isoperimetric region in $C_5$ copies of $\R^n$ each with constant density, hence itself is made by $C_5$ sets which are either balls or empty sets. Since the limit is an isoperimetric set, it must be a single ball and $C_5-1$ empty sets. Since mean curvature is bounded, ${\rm C}^{1,\alpha}$ convergence follows by Allard's regularity theorem. Higher order convergence follows by Schauder estimates (also see \cite[Prop. 3.3]{M2}). An easy computation shows that it is best for the set to be near a point of minimum density.
\end{proof}

We now give our first major existence theorem, under a hypothesis that the density approaches the limiting value slowly.

\begin{theorem}\label{Thm2A}
Let $f$ be a density on $\R^n$ approaching a finite limit $a>0$ at infinity, and assume that the isoperimetric sets are bounded. Suppose that, for every $\widetilde V> 0$ and for every $R_0>0$, there is some ball $B$ of volume $\widetilde V$ at distance from the origin at least $R_0$ such that
\begin{equation}\label{ass2Afin}
\sup_{x\in B} f(x) \leq a^{\frac 1n} \Big( \inf_{x\in B} f(x)\Big)^{\frac{n-1} n}\,.
\end{equation}
Then there exist isoperimetric sets of all volumes.
\end{theorem}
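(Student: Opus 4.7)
The strategy is to recognize that condition~(\ref{ass2Afin}) is tailored precisely to force the mean density of the ball $B$ to be at most $a$, and thereby reduce the statement to Proposition~\ref{generalcase}.

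More concretely, fix a volume $V>0$ and, in order to apply Proposition~\ref{generalcase}, I would fix an arbitrary $\widetilde V\in (0,V]$ and an arbitrary $R_0>0$. By hypothesis we are given a ball $B$ of volume $\widetilde V$ centered at distance at least $R_0$ from the origin and satisfying~(\ref{ass2Afin}). Setting
\[
\rho_{\rm sup}:=\sup_{x\in B} f(x),\qquad \rho_{\rm min}:=\inf_{x\in B} f(x),
\]
the hypothesis reads $\rho_{\rm sup}^n\leq a\,\rho_{\rm min}^{n-1}$. Plugging this into the right-hand inequality in the basic mean-density estimate~(\ref{estdens}) gives, for the mean density $\rho$ of $B$,
\[
\rho\ \leq\ \frac{\rho_{\rm sup}^n}{\rho_{\rm min}^{n-1}}\ \leq\ \frac{a\,\rho_{\rm min}^{n-1}}{\rho_{\rm min}^{n-1}}\ =\ a.
\]
Thus for every $\widetilde V\leq V$ there are balls of volume $\widetilde V$ arbitrarily far from the origin whose mean density is at most $a$.

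Having verified this, the assumption of Proposition~\ref{generalcase} is met (the boundedness of isoperimetric sets is part of the hypotheses of the theorem), and so that proposition provides an isoperimetric set of volume $V$. Since $V$ was arbitrary, the conclusion follows.

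There is no genuine obstacle here: once one has the sharp estimate~(\ref{estdens}), the content of the theorem is simply the identification of~(\ref{ass2Afin}) as the cleanest pointwise ``slow approach to $a$'' condition that makes the upper bound in~(\ref{estdens}) collapse to~$a$. The real work has already been carried out in establishing Proposition~\ref{generalcase} (via Lemma~\ref{folllem}) and in obtaining the mean-density bound~(\ref{estdens}); the present theorem is their formal combination.
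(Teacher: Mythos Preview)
Your proposal is correct and follows essentially the same route as the paper: reduce to Proposition~\ref{generalcase} by showing that condition~(\ref{ass2Afin}) together with the upper bound in~(\ref{estdens}) forces the mean density of the given ball to be at most $a$. The paper's proof additionally introduces a quantity $\tau$ bounding the diameter of far-out balls of volume $V$, but this plays no role in the logical argument and your streamlined version is in fact cleaner.
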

\begin{proof}
Fix a volume $V$, and let
\[
\tau = 3\,\bigg( \frac{V}{\omega_n a}\bigg)^{\frac 1n}\,.
\]
Since the density approaches $a$ at infinity, for some $R_{\rm min}$ any ball of volume $V$ at distance at least $R_{\rm min}$ from the origin has diameter less than $\tau$. Let us then fix $0 < \widetilde V \leq V$, and let $R_0\geq R_{\rm min}$ be arbitrarily big. Thanks to Proposition~\ref{generalcase}, it suffices to find a ball of volume $\widetilde V$ at distance bigger than $R_0$ from the origin with mean density at most $a$.\par
By assumption, there exists a ball $B$ of volume $\widetilde V$ such that~(\ref{ass2Afin}) holds. As explained above, we conclude by checking that the mean density $\rho$ of $B$ is at most $a$, which follows since by~(\ref{ass2Afin}) one has
\[
\rho\leq\frac{\rho_{\rm sup}^{n}}{\rho_{\rm min}^{n-1}}=\frac{\Big(\sup_{x\in B} f(x)\Big)^n}{\Big( \inf_{x\in B} f(x)\Big)^{n-1}}\leq a\,.
\]
\end{proof}
\begin{remark}\label{6.10}{\rm
Notice that, in the particular case when $f$ is radial and nondecreasing, then~(\ref{ass2Afin}) can be rewritten in the following particularly useful way. For any $R_0>0$ and any $\tau>0$, there exists $R>R_0$ such that 
\begin{equation}\label{ass2A}
f(R+\tau) \leq a^{\frac 1n} f(R)^{\frac{n-1} n}\,.
\end{equation}}
\end{remark}
\begin{proof}
Assume first that~(\ref{ass2Afin}) holds. Fix any $\tau>0$ and any $R_0>0$, and let $\widetilde V= a \omega_n (\tau/2)^n$. By assumption, there exists some ball $B=B(\bar x,r)$ having distance from the origin bigger than $R_0$ and such that~(\ref{ass2Afin}) is true. Since $f$ is nondecreasing and converging to $a$, one has that $r>\tau/2$. Therefore, calling $R=|\bar x|-r>R_0$, one finds
\[
f(R+\tau)\leq f (R+2r) = \sup_{x\in B} f(x) \leq a^{\frac 1n} \Big( \inf_{x\in B} f(x)\Big)^{\frac{n-1} n}
= a^{\frac 1n} f(R)^{\frac{n-1} n}\,,
\]
that is, (\ref{ass2A}).\par
Conversely, assume that~(\ref{ass2A}) is true. Given any $\widetilde V>0$ and any $R_0>0$, we let 
\[
\tau = 3 \bigg(\,\frac{\widetilde V}{a \omega_n}\,\bigg)^{\frac 1n}\,.
\]
By assumption, there exists some $R> R_0$ such that~(\ref{ass2A}) holds. Let then $r>0$ be such that, calling $\bar x$ any point having distance $R+r$ from the origin, and denoting $B=B(\bar x,r)$, one has $|B| = \widetilde V$. Of course there is exactly one such $r$, and since the density is converging to $a$ then one has $r<\tau/2$ provided $R$ is big enough. Hence, recalling again that $f$ is radial and nondecreasing, one gets
\[
\sup_{x\in B} f(x) =f (R+2r) \leq f(R+\tau) \leq a^{\frac 1n} f(R)^{\frac{n-1} n} =a^{\frac 1n} \Big( \inf_{x\in B} f(x)\Big)^{\frac{n-1} n}\,,
\]
that is, (\ref{ass2Afin}).
\end{proof}

We now give our second existence result. Although it follows from Theorem~\ref{Thm2A}, its hypothesis is generally easier to check.

\begin{theorem}\label{6.11}
Let $f$ be a radial, nondecreasing density on $\R^n$ approaching a finite limit $a>0$ at infinity, and assume that, for any $c>0$ and any $\rho>0$, there exists some $R\geq \rho$ such that
\begin{equation}\label{lastone}
f(R) \leq a - e^{-cR}\,.
\end{equation}
Then, there exist isoperimetric regions of every volume.
\end{theorem}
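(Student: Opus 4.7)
The plan is to derive the theorem from Theorem~\ref{Thm2A}. The required boundedness of isoperimetric sets is automatic here by Theorem~\ref{thba} (or Corollary~\ref{corbdd}), and since $f$ is radial and nondecreasing, Remark~\ref{6.10} shows that it suffices to verify the following: for every $\tau>0$ and every $R_0>0$ there exists some $R>R_0$ with
\[
f(R+\tau)\leq a^{1/n}f(R)^{(n-1)/n}.
\]
I would establish this by contradiction, after translating it into a condition on the ``gap'' $g(R):=a-f(R)$, which by hypothesis is a nonnegative nonincreasing function tending to $0$ at infinity.

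Writing $s=g(R)/a$ and $s'=g(R+\tau)/a$, the target inequality is equivalent to $1-s'\leq (1-s)^{(n-1)/n}$. Since $(1-s)^{(n-1)/n}=1-(n-1)s/n+O(s^2)$ as $s\to 0^+$, for any fixed $\beta\in((n-1)/n,\,1)$ (for concreteness, $\beta=(2n-1)/(2n)$) there is some $s_\ast>0$ such that this inequality holds whenever $s\leq s_\ast$ and $s'\geq\beta s$. Because $s(R)\to 0$, one may harmlessly enlarge $R_0$ so that $s(R)\leq s_\ast$ for every $R>R_0$; the problem thus reduces to finding arbitrarily large $R$ with $g(R+\tau)\geq\beta g(R)$.

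Assume for contradiction that no such $R$ exists past some threshold, so that $g(R+\tau)<\beta g(R)$ for every $R>R_0$. Iterating along the arithmetic progression $R_0+k\tau$ yields $g(R_0+k\tau)<\beta^k g(R_0)$, and then the monotonicity of $g$ allows one to interpolate between consecutive grid points to obtain
\[
g(R)\leq C\,e^{-c_1 R}\qquad\text{for all }R\text{ sufficiently large},
\]
where $c_1=-\tau^{-1}\log\beta>0$ and $C$ depends only on $g(R_0)$, $\beta$, $\tau$, $R_0$. Invoking the hypothesis~(\ref{lastone}) with $c=c_1/2$ produces arbitrarily large $R$ with $g(R)\geq e^{-c_1 R/2}$, which combined with the upper bound forces $e^{c_1 R/2}\leq C$, an immediate contradiction as $R\to\infty$.

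The main (and essentially only) obstacle is choosing the right exponent $\beta$: it must satisfy $\beta>(n-1)/n$ so that the algebraic inequality survives, yet $\beta<1$ so that iteration produces genuine exponential decay. Once such a $\beta$ is in hand, everything else is soft: the passage from decay along the discrete grid to decay for all large $R$ is just monotonicity, affecting only the constant $C$, and the hypothesis~(\ref{lastone}) is designed to rule out any exponential decay of $g$. Note that no new geometric input beyond Theorem~\ref{Thm2A} and Remark~\ref{6.10} is required.
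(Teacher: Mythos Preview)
Your proposal is correct and follows essentially the same route as the paper: both reduce to Theorem~\ref{Thm2A} via Remark~\ref{6.10} and Theorem~\ref{thba}, assume by contradiction that the inequality $f(R+\tau)\leq a^{1/n}f(R)^{(n-1)/n}$ fails for all large $R$, iterate to obtain exponential decay of $a-f(R)$, and contradict hypothesis~(\ref{lastone}). The only cosmetic difference is that the paper iterates the multiplicative inequality on $f$ directly (obtaining $f(R_0+k\tau)/a>(f(R_0)/a)^{((n-1)/n)^k}$) and linearizes at the end via $e^x>1+x$, whereas you linearize first to $g(R+\tau)<\beta g(R)$ and then iterate---the resulting decay rates coincide.
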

\begin{proof}
We will obtain the result as consequence of Theorem~\ref{Thm2A} (which can be used thanks to Theorem~\ref{thba}). Indeed, suppose that there exists some volume for which no isoperimetric set exists. 

Then by Theorem~\ref{Thm2A} and Remark~\ref{6.10}, there must be some $\tau>0$ and some $R_0>0$ such that for all $R\geq R_0$ one has
\[
f(R+\tau) > a^{\frac 1n} f(R)^{\frac{n-1} n}\,.
\]
Applying this inequality to $R=R_0+\tau$, which is of course bigger than $R_0$, one finds
\[
f(R_0+2\tau) > a^{\frac 1n} f(R_0+\tau)^{\frac{n-1} n} 
> a^{\frac 1n} \bigg(a^{\frac 1n} f(R_0)^{\frac{n-1} n}\bigg)^{\frac{n-1} n} 
=a^{\frac 1n + \frac{n-1}{n^2}}  f(R_0)^{\big(\frac{n-1}n \big)^2}\,,
\]
and an immediate induction argument gives, for any positive integer $k$,
\[
f(R_0+k\tau) > a^{1-\big(\frac{n-1}n \big)^k} f(R_0)^{\big(\frac{n-1}n \big)^k}\,,
\]
which can be rewritten as
\begin{equation}\label{apply}
\frac{f(R_0+k\tau)}{a} > \bigg(\frac{f(R_0)}{a}\bigg)^{\big(\frac{n-1}n \big)^k} > 1 + \bigg(\frac{n-1}n \bigg)^k \ln \frac{f(R_0)}{a}\,.
\end{equation}
Take now any $R\gg R_0$, and call $R'$ the biggest number smaller than $R$ of the form $R'=R_0 + k\tau$ for some $k\in\N$, so that $R-\tau < R' \leq R$. Applying~(\ref{apply}) we find that
\begin{equation}\label{absurd}\begin{split}
f(R) \geq f(R') & > a + a\, \bigg(\frac{n-1}n \bigg)^{\frac{R'-R_0}\tau} \ln \frac{f(R_0)}{a} 
> a + a\, \bigg(\frac{n-1}n \bigg)^{\frac{R-\tau-R_0}\tau} \ln \frac{f(R_0)}{a}\\
&> a- e^{-cR}\,,
\end{split}\end{equation}
where the last equality holds for any
\[
0 < c <  - \, \frac{1}\tau \, \ln \frac{n-1}{n}
\]
provided $R$ is big enough. The validity of~(\ref{absurd}) for every large $R$ contradicts~(\ref{lastone}), completing the proof.
\end{proof}

\begin{corollary}\label{6.7}
Let $f$ be a radial, nondecreasing density on $\R^n$ approaching a finite limit $a>0$ at infinity, and assume that $f$ is of class ${\rm C}^1$ outside some ball $B(R_{\rm min})$. Suppose moreover that
\begin{equation}\label{ass2A'}
f'(R) = o\big(a-f(R)\big)
\end{equation}
for $R\to \infty$. Then there exist isoperimetric sets of all volumes.
\end{corollary}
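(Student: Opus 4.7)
The plan is to deduce the existence of isoperimetric sets directly from Theorem~\ref{6.11}, so I will verify its hypothesis~(\ref{lastone}) from the differential condition~(\ref{ass2A'}). The argument is essentially a one-line Gronwall-type computation applied to $g(R) := a - f(R)$, which is a positive, nonincreasing $C^1$ function on $[R_{\rm min},\infty)$ (since $f$ is nondecreasing and below $a$), tending to $0$ as $R\to\infty$.

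I will argue by contradiction: suppose~(\ref{lastone}) fails. Then there exist $c>0$ and $\rho>0$ such that $g(R) = a - f(R) < e^{-cR}$ for every $R\geq \rho$. On the other hand, the hypothesis~(\ref{ass2A'}) says that for every $\delta>0$ there is $R_\delta$ with $f'(R)\leq \delta\, g(R)$ for all $R\geq R_\delta$; equivalently, $-g'(R)\leq \delta\, g(R)$, i.e.\ $(\log g)'(R)\geq -\delta$. Integrating from $R_\delta$ to $R$ yields the exponential lower bound
\[
g(R) \geq g(R_\delta)\, e^{-\delta (R-R_\delta)} \qquad \forall\, R\geq R_\delta.
\]

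Now I pick $\delta = c/2$; then for $R\geq \max\{\rho, R_\delta\}$ I would combine the two bounds to obtain
\[
g(R_\delta)\, e^{(c/2)R_\delta}\, e^{-(c/2)R} \leq g(R) < e^{-cR},
\]
which rearranges to $g(R_\delta)\, e^{(c/2)R_\delta} < e^{-(c/2)R}$. Since the left-hand side is a fixed positive constant while the right-hand side tends to $0$ as $R\to\infty$, this is a contradiction for $R$ large enough. Hence~(\ref{lastone}) holds, Theorem~\ref{6.11} applies (note that its radial-nondecreasing hypothesis is satisfied, so Theorem~\ref{thba} also gives boundedness of isoperimetric sets), and the existence of isoperimetric sets of every volume follows.

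There is no real obstacle here; the only points requiring care are that $g$ stays strictly positive (so that $\log g$ makes sense and integration is legitimate), which follows from $f<a$ on a nondecreasing approach to $a$, and the correct choice of $\delta$ strictly smaller than $c$ so that the ``slow decay'' from~(\ref{ass2A'}) beats the assumed ``fast decay'' $e^{-cR}$. The result can therefore be presented compactly as a corollary of Theorem~\ref{6.11}.
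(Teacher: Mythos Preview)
Your proof is correct and follows exactly the paper's approach: the paper simply writes ``This follows directly by Theorem~\ref{6.11}, just observing that~(\ref{ass2A'}) implies~(\ref{lastone})'', and you have filled in the Gronwall-type details of that implication. The only quibble is that your claim $f<a$ (so $g>0$) is not automatic from a nondecreasing approach to $a$---$f$ could reach $a$ at some finite radius---but in that degenerate case the density is eventually constant and existence is trivial, and the paper's own one-line proof glosses over this as well.
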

\begin{proof}
This follows directly by Theorem~\ref{6.11}, just observing that~(\ref{ass2A'}) implies~(\ref{lastone}).
\end{proof}

We finally give our last major existence theorem, under a hypothesis that the average density around the boundary of some balls is not much greater than the average over the balls themselves.
\begin{theorem}\label{Thm2B}
Let $f$ be a density on $\R^n$ approaching a finite limit $a>0$ at infinity, and assume that the isoperimetric sets are bounded. Suppose that, for any $V>0$, there exist balls $B$ of volume $V$ arbitrarily far from the origin satisfying the mean inequality
\begin{equation}\label{xint}
\intmed_{\partial B} f \leq a^{\frac 1n} \bigg( \intmed_B f\bigg)^{\frac{n-1} n}\,.
\end{equation}
Then there exist isoperimetric sets of all volumes.
\end{theorem}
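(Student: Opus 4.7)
The plan is to reduce Theorem~\ref{Thm2B} to Proposition~\ref{generalcase} by showing that the averaging condition~(\ref{xint}) is \emph{exactly} the statement that the ball $B$ has mean density at most $a$.

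First I would rewrite both sides of~(\ref{xint}) in a form amenable to Definition~\ref{defmd}. For a Euclidean ball $B$ of radius $r$, one has
\[
\intmed_B f = \frac{|B|_f}{\omega_n r^n}\,, \qquad \intmed_{\partial B} f = \frac{P_f(B)}{n\omega_n r^{n-1}}\,,
\]
since the Euclidean volume and surface area of $B$ are $\omega_n r^n$ and $n\omega_n r^{n-1}$ respectively. Plugging these expressions into~(\ref{xint}) and clearing the factor $n\omega_n r^{n-1}$, the inequality becomes
\[
P_f(B) \leq n \big(\omega_n a\big)^{\frac 1 n}\,\big| B \big|_f^{\frac{n-1}{n}}\,,
\]
which, comparing with~(\ref{defmeandens}), is precisely the statement that the mean density $\rho$ of $B$ satisfies $\rho \leq a$. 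So the hypothesis of the theorem is equivalent to saying that for each $V>0$ there are balls of volume $V$, arbitrarily far from the origin, having mean density at most $a$.

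Now I would invoke Proposition~\ref{generalcase}. Fix $V>0$ and any $0 < \widetilde V \leq V$. Applying the hypothesis of the theorem with $\widetilde V$ in place of $V$, we obtain balls of volume $\widetilde V$ arbitrarily far from the origin with mean density at most $a$. Since isoperimetric sets are assumed to be bounded, Proposition~\ref{generalcase} then yields the existence of an isoperimetric set of volume $V$, completing the proof.

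There is no genuine obstacle here: the entire content of the theorem lies in recognising~(\ref{xint}) as a reformulation of the condition $\rho \leq a$ on the mean density. The one thing to be slightly careful about is to apply the hypothesis at each volume $\widetilde V \leq V$ separately, which is legitimate since the assumption is stated for every positive volume.
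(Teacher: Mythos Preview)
Your proposal is correct and follows essentially the same approach as the paper: both arguments observe that~(\ref{xint}) is precisely the condition that the mean density of $B$ is at most $a$, and then invoke Proposition~\ref{generalcase}. Your presentation is in fact slightly more explicit than the paper's in noting that the hypothesis must be applied at each $\widetilde V\leq V$ separately.
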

\begin{proof}
This is again an easy consequence of Proposition~\ref{generalcase}. It is enough to show that the mean density of a ball $B$ for which~(\ref{xint}) holds true is less than $a$. Indeed, letting $r$ denote the radius of $B$, one has
\[\begin{split}
P(B) &= n\omega_n r^{n-1} \intmed_{\partial B} f
\leq  n\omega_n r^{n-1} a^{\frac 1n} \bigg( \intmed_B f\bigg)^{\frac{n-1} n}
=  n\omega_n r^{n-1} a^{\frac 1n} \bigg( \frac{|B|}{\omega_n r^n}\bigg)^{\frac{n-1} n}\\
&=  n\big(\omega_n a\big)^{\frac 1n} |B|^{\frac{n-1} n}\,,
\end{split}\]
which by Definition~\ref{defmd} yields that the mean density of $B$ is less than $a$.
\end{proof}
\begin{corollary}\label{Thm2D}
Let $f$ be a density on $\R^n$ approaching a finite limit $a>0$ at infinity, and assume that the isoperimetric sets are bounded. If $f$ is of class ${\rm C}^2$ and superharmonic (at least far from the origin), then there exist isoperimetric sets of all volumes.
\end{corollary}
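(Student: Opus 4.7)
The plan is to deduce the corollary directly from Theorem~\ref{Thm2B}: it suffices to produce, for each prescribed weighted volume $V>0$, balls $B$ of weighted volume $V$ arbitrarily far from the origin on which the averaging inequality
\[
\intmed_{\partial B} f \,\leq\, a^{1/n}\bigl(\intmed_B f\bigr)^{(n-1)/n}
\]
holds. Fix such a $V$ and take $x_0$ at large distance from the origin, so that the ball $B=B(x_0,r)$ with $|B|_f=V$ --- for which $r$ is essentially $(V/(a\omega_n))^{1/n}$ once $|x_0|$ is large --- lies entirely in the region where $\Delta f\leq 0$.

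The key input from superharmonicity is the classical sub-mean-value property, obtained via the divergence theorem: writing $A(s):=\intmed_{\partial B(x_0,s)}f$, one has
\[
A'(s)\,=\,\frac{1}{n\omega_n s^{n-1}}\int_{B(x_0,s)}\Delta f\,dx\,\leq\,0,
\]
so $A$ is nonincreasing in $s$. Writing the ball mean $M(r):=\intmed_{B(x_0,r)}f$ as the weighted integral $\frac{n}{r^n}\int_0^r s^{n-1}A(s)\,ds$, we see that $M(r)$ is a weighted average of values $A(s)\geq A(r)$, and hence $M(r)\geq A(r)$, i.e.\ $\intmed_{\partial B}f\leq\intmed_B f$.

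The main obstacle is to promote this qualitative comparison $A\leq M$ to the quantitative averaging inequality $A^n\leq aM^{n-1}$. Since $f\to a$ at infinity, both $A$ and $M$ converge to $a$ as $|x_0|\to\infty$ (with $r$ adjusted so that $|B|_f=V$); writing $A=a-\alpha$, $M=a-\mu$ with $\alpha\geq\mu$ coming from $A\leq M$, the averaging inequality reduces at leading order in $\alpha,\mu\to 0$ to $n\alpha\geq(n-1)\mu$, which follows at once from $\alpha\geq\mu$ since $n\geq n-1$. Making this asymptotic expansion rigorous by controlling the higher-order terms for balls sufficiently far from the origin completes the verification of the hypothesis of Theorem~\ref{Thm2B}, which then immediately delivers isoperimetric sets of every prescribed volume and proves the corollary.
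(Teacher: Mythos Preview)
Your overall strategy—verifying the hypothesis of Theorem~\ref{Thm2B} via the sub-mean-value property of superharmonic functions—is exactly the paper's approach. However, you substantially overcomplicate the passage from $A:=\intmed_{\partial B}f\leq\intmed_B f=:M$ to the averaging inequality. The paper's entire proof is the one-line chain
\[
\intmed_{\partial B} f \;\leq\; \intmed_B f \;\leq\; a^{1/n}\Bigl(\intmed_B f\Bigr)^{(n-1)/n},
\]
where the second inequality is nothing but $M\leq a$ (equivalently $M^{1/n}\leq a^{1/n}$, then multiply by $M^{(n-1)/n}$). No asymptotic expansion is needed.

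Your expansion argument, besides being unnecessary, has a genuine gap. The implication ``$n\alpha\geq(n-1)\mu$ follows from $\alpha\geq\mu$ since $n\geq n-1$'' is false without the extra input $\mu\geq0$: take $\alpha=\mu<0$. Nor can the higher-order terms be ``controlled away'': in the borderline case $A=M$ your target inequality $A^n\leq aM^{n-1}$ is \emph{equivalent} to $M\leq a$, so no amount of asymptotics can replace that fact. The observation you are missing is simply that $\intmed_B f\leq a$ for balls far from the origin; once you use it, the chain $A\leq M\leq a$ gives $A^n\leq M^n=M\cdot M^{n-1}\leq a\,M^{n-1}$ exactly, with no error terms at all.
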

\begin{proof}
It suffices to observe that, if $B$ is a ball on which $f$ is superharmonic, then
\[
\intmed_{\partial B}  f \leq \intmed_B f \leq a^{\frac 1n} \bigg(\intmed_B f\bigg)^{\frac{n-1} n}\,,
\]
and thus the result follows directly from Theorem~\ref{Thm2B}.
\end{proof}

\begin{remark}{\rm
If $f$ is a ${\rm C}^2$ radial density, then superharmonicity corresponds to
\[
f''(r) \leq - \,\frac{n-1}r\, f'(r)\,,
\]
which is a bit stronger than the concavity $f''(r)\leq 0$. It is an open question whether concavity suffices (see Open Problem~\ref{opconcavity}).
}\end{remark}

\begin{remark}\label{Rem7.16}{\rm
Our results cover all the standard examples. Densities such as $1 -r^{-\alpha}$ ($\alpha > 0$) approach $a = 1$ slowly enough to be covered by Theorem~\ref{6.11}. The density $1 - e^{-r}$ grows too fast for Theorem~\ref{6.11} but it is covered by Corollary~\ref{Thm2D}, which also handles $1 - e^{-e^r}$ and so on for faster growth, although it does not work with slow growth such as $1 - r^{-\alpha}$ if $\alpha < n-2$.\par
Our results may not, however, cover uneven growth. For example, take a density that approaches $1$ rapidly, such as $f(r)=1 - e^{-r}$, so that all of our slow growth theorems do not apply (but this density is covered by Corollary~\ref{Thm2D}). Then make a smooth perturbation which alters only slightly $f$ and $f'$ but makes big changes in $f''$, so that superharmonicity fails in all balls of a given radius and Corollary~\ref{Thm2D} does not apply. It is not clear to us whether or not Theorem~\ref{Thm2B} will apply.
}\end{remark}


\section{Open problems\label{sectopenpbs}}

In this last section, we briefly collect some interesting open problems which are strictly related to the results of this paper.

\begin{op}\label{first}
Is Conjecture~\ref{conj} true? Or, at least, is it true in the particular case $n=2$?
\end{op}

\begin{op}
Is Theorem~\ref{thm2.2} true, for dimension $n=2$, without the assumption that $f$ is radial, or that $f$ diverges? Of course, a positive result would be much stronger than Open Problem~\ref{first} for the case $n=2$. On the other hand, in the proof of Theorem~\ref{thm2.2} for $n = 2$ the divergence assumption played a very minor role.
\end{op}

\begin{op}
Is it true for a radial density plus some reasonable further assumption that the origin must be contained in every isoperimetric region? Recall that the radial assumption is not enough by Example~\ref{ex11}.
\end{op}

\begin{op}
Is it true, in some of the existence results of Section~\ref{seccon}, that isoperimetric regions must be bounded?
\end{op}

\begin{op}\label{opconcavity}
Is it true that isoperimetric regions exist whenever the density is radial and concave (that is, $f(x)=g(|x|)$ for a concave function $g$)?
\end{op}

\section*{Acknowledgments}

This work began when F.M. was visiting Pavia in June 2010, and continued at the Fields Institute in Toronto in October 2010. F.M. also acknowledges partial support from the National Science Foundation. A.P. acknowledges the partial support of the ERC Starting Grant n. 258685 and the ERC Advanced Grant n. 226234. We would like to thank Sean Howe and Emanuel Milman for helpful comments.


\begin{thebibliography}{aaa}
\bibitem{AFP} L. Ambrosio, N. Fusco, D. Pallara, Functions of Bounded Variation and Free Discontinuity Problems, Oxford University Press (2000).
\bibitem{BZ}ÊYu. D. Burago, V. A. Zalgaller, Geometric Inequalities, Grund. math. Wiss. {\bf 285}, Springer-Verlag, 1988.
\bibitem{CMV} A. Ca\~nete, M. Miranda Jr., D. Vittone, Some isoperimetric problems in planes with density, J. Geom. Anal. {\bf 20} (2010), no. 2, 243--290.
\bibitem{D} A. D\'\i az, N. Harman, S. Howe, David Thompson, Isoperimetric problems in sectors with density, Adv. Geom., to appear (available at http://arxiv.org/abs/1012.0450).
\bibitem{FMP} N. Fusco, F. Maggi, A. Pratelli, The sharp quantitative isoperimetric inequality, Ann. of Math. {\bf 168} (2008), 941--980.
\bibitem{G} E. Giusti, Minimal Surfaces and Functions of Bounded Variation, Birkh\"auser, Boston, 1984.
\bibitem{K} B. Kawohl, On the isoperimetric nature of a rearrangement inequality and its consequences for some variational problems, Arch. Rational Mech. Anal. {\bf 94} (1986), 227--243.
\bibitem{M3} F. Morgan, Riemannian Geometry: a Beginner's Guide, A. K. Peters, 2nd ed., 1998.
\bibitem{M2} F. Morgan, Regularity of isoperimetric hypersurfaces in Riemannian manifolds, Trans. AMS {\bf 355} (2003), 5041--5052.
\bibitem{M1} F. Morgan, Geometric Measure Theory: a Beginner's Guide, Academic Press, 4th edition, 2009.
\bibitem{newM1} F. Morgan, Manifolds with density, Notices Amer. Math. Soc. {\bf 52} (2005), 853--858.
\bibitem{newM2} F. Morgan, Manifolds with density, http://sites.williams.edu/Morgan/2010/03/15/manifolds-with-density .
\bibitem{MHH} F. Morgan, S. Howe, N. Harman, Steiner and Schwarz symmetrization in warped products and fiber bundles with density, Revista Mat. Iberoamericana, to appear (available at http://arxiv.org/abs/0911.1938).
\bibitem{MJ} F. Morgan, D.L. Johnson, Some sharp isoperimetric theorems for Riemannian manifolds, Indiana Univ. Math. J., {\bf 49} (2000), no. 3, 1017--1041.
\bibitem{BCMR} C. Rosales, A. Ca\~nete, V. Bayle, F. Morgan, On the isoperimetric problem in Euclidean space with density, Calc. Var. Partial Differential Equations {\bf 31} (2008), no. 1, 27--46.
\bibitem{V} A.I. Vol'pert, Spaces $BV$ and quasilinear equations, Math. USSR Sb. {\bf 17} (1967), 225--267.
\end{thebibliography}
\end{document}